\documentclass{article}
\usepackage[utf8]{inputenc}
\usepackage[margin=1in]{geometry}
\usepackage[T1]{fontenc}

\usepackage{authblk}
\usepackage{blindtext}

\usepackage{multirow}

\usepackage{a4wide}
\usepackage{epstopdf}
\usepackage{epsfig}
\usepackage{curves}
\usepackage{epic,eepic}
\usepackage{graphicx}
\usepackage{amsmath,amsfonts,amssymb,latexsym}
\usepackage{mathabx}
\usepackage{enumerate}
\usepackage{enumitem}
\usepackage{multirow}
\usepackage{rotating}
\usepackage[thmmarks,amsmath]{ntheorem}
\usepackage{lscape}
\usepackage{float}
\floatstyle{boxed}
\usepackage{authblk}
\usepackage[linesnumbered,ruled,lined,boxed]{algorithm2e}
\usepackage{graphics}
\usepackage{epstopdf}

\usepackage[dvipsnames]{xcolor}

\usepackage{titlesec}
\usepackage{soul}

\usepackage{hyperref}

\newtheorem{proposition}{Proposition}[section]
\newtheorem{theorem}[proposition]{Theorem}

\newtheorem{lemma}[proposition]{Lemma}
\newtheorem{definition}[proposition]{Definition}

\newtheorem{remark}[proposition]{Remark}

\theorembodyfont{\normalfont}

\newenvironment{proof}{\noindent\textbf{Proof.}
  }{\hspace*{\fill}$\Box$ \\[1em]}

\newenvironment{proof-app}[1]{\noindent\textbf{Proof of Proposition \ref{#1}.}
  }{\hspace*{\fill}$\Box$ \\[1em]}

\newcommand{\R}{{\mathbb R}}
\renewcommand{\Re}{{\mathbb R}}

\begin{document}

\title{Scalarization via utility functions \\ in multi-objective optimization}

\author[1]{Lorenzo Lampariello\thanks{lorenzo.lampariello@uniroma3.it}}
\author[2]{Simone Sagratella\thanks{sagratella@diag.uniroma1.it}}
\author[2]{Valerio Giuseppe Sasso\thanks{sasso@diag.uniroma1.it}}
\author[3]{Vladimir Shikhman\thanks{vladimir.shikhman@mathematik.tu-chemnitz.de}}
\affil[1]{Roma Tre University, Italy}
\affil[2]{Sapienza University of Rome, Italy}
\affil[3]{Chemnitz University of Technology, Germany}

\date{\today}

\maketitle

\begin{abstract}
We study a general scalarization approach via utility functions in multi-objective optimization. 
It consists of maximizing utility which is obtained from the objectives’ bargaining with regard to a disagreement reference point. 
The theoretical framework for a broad class of utility functions from microeconomics is developed. 
For that, we associate a utility-dependent single-objective optimization problem with the given multi-objective optimization problem.
We show that Pareto optimal points of the latter can be recovered by solving the former. 
In particular, Cobb-Douglas, Leontief, and CES utility functions are considered.
We prove that any Pareto optimal point can be obtained as a solution of scalarization via one of the mentioned utility functions. 
Further, we propose a numerical
scheme to solve utility-dependent single-objective optimization problems. Here, the main difficulty comes from the necessity to address constraints which are associated with a disagreement reference point.
Our crucial observation is that the explicit treatment of these additional constraints may be avoided. 
This is the case if the Slater condition is satisfied and the utility function under consideration has the so-called
barrier property. 
Under these assumptions, we prove the convergence of our scheme to Pareto optimal points. 
Numerical experiments on real-world financial datasets in a portfolio selection context confirm the efficiency of our scalarization approach via utility functions.
    \end{abstract}
    
\section{Introduction}

In multi-objective optimization, the scalarization approach has meanwhile become standard for trying to find (weak) Pareto optimal points. Usually, the {\it weighted} sum scalarization is applied to tackle a multi-objective optimization problem. This is to optimize a linear combination of the objectives with properly adjusted nonnegative or positive weights. More sophisticated scalarization techniques have been introduced in analogy. We mention the weighted $t$-th power and the weighted quadratic approaches. The whole branch of scalarization techniques successfully incorporates {\it constraints} into the single-objective optimization problem to solve. These are the elastic constraint method and the Benson's method to name a few. The family of {\it reference point} approaches includes a variety of methods in which a properly chosen reference point in the space of objectives is used. Most prominent of them is the distance function approach. Here, the distance from a utopia or ideal point to the Pareto set is minimized. If choosing the maximum norm for the distance, we arrive at the Chebyshev scalarization, and the choice of the $p$-norm leads to the compromise programming. The achievement function method generalizes this idea by allowing to incorporate penalty terms into the single-objective formulation. Another possibility to deal with reference points is via the goal programming. Here, the weighted sum or maximum of negative and positive deviations from the goal to be achieved is minimized. 
Finally, a wide variety of approaches employ a {\it direction} in the objective space
along which a search is performed. Among those we mention the reference direction method, the Pascoletti and Serafini method, and the gauge-based methods. For the scalarization techniques discussed above we refer to the survey \cite{ehrgott2005chapter17} and to the references therein, as well as to the monograph \cite{ehrgott2005multicriteria}.

In this paper, we propose a new scalarization approach for multi-objective optimization via utility functions. The idea is to maximize utility which is obtained from the objectives' bargaining with regard to a prescribed disagreement level. Let us explain our approach in terms of a bargaining game, cf. e.g. \cite{osborne1994course}, where different objectives correspond to the players' losses. This is to say that the players aim to minimize their objectives.
First, in the space of objectives a disagreement point is fixed. It can be viewed as consisting of those losses the players suffer if not coming to an agreement at all. The corresponding bargaining benefits are then given by the differences of the disagreement losses with the players' objectives corresponding to a current agreement. In other words, the benefits indicate to which extent the disagreement losses for every player can be diminished by means of bargaining. Second, these bargaining benefits are combined together via a utility function and the latter is subsequently maximized. The choice of a utility function accounts for the fact that some players and, hence, their bargaining benefits, could be more or less important depending on how they contribute to the social welfare. We point out that the use of utility functions for scalarizing the players' bargaining benefits is original. It allows to link multi-objective optimization to the microeconomics, where utility functions are typically used for measuring the utility of consumption bundles, see e.g. \cite{mascollel1995micro, silberberg2000structure}. 
Since the utility functions are thus usually defined on the nonnegative orthant, we need to take care for the disagreement reference point constraints in the space of objectives.
In economic terms, these constraints prevent the players' bargaining losses to exceed those caused by the disagreement. In order to deal with the disagreement reference point constraints in a mathematically correct way, we impose the Slater condition as it is standard in the convex setting. The use of Slater condition is further justified from the multi-objective perspective.% If violated, the task of finding weak Pareto optimal points just reduces to a feasibility problem, see Proposition \ref{prop: Slat Weak}.

Let us briefly overview our results on the scalarization approach via utility functions. In Section \ref{sec:utilframework}, the theoretical framework for a fairy general class of utility functions is developed.
Here, we associate a utility-dependent single-objective optimization problem with the given multi-objective optimization problem. 
We mainly show that (weak) Pareto optimal points of the latter can be recovered by solving the former, see Theorem \ref{th:strict}. 
Additionally, we state conditions on the utility function under which the corresponding single-objective optimization problem enjoys  efficient solvability. In Section \ref{sec:examples}, we specify our scalarization approach for classical utility functions from microeconomics. 
Namely, Cobb-Douglas, Leontief, and CES utility functions are considered. 
The theoretical framework from Section \ref{sec:utilframework} is thus successively applied for these cases. 
Moreover, we show that any (weak) Pareto optimal point can be obtained as a solution of scalarization via one of the mentioned utility functions, see Theorems \ref{thm:w-cd}, \ref{thm:w-min}, \ref{thm:w-ces-neg}, and \ref{thm:w-ces-pos}. 
For that, the parameters of Cobb-Douglas, Leontief, or CES utility functions need to be properly adjusted. 
Section \ref{sec:algo} is devoted to the algorithmic developments. 
We propose a rather general numerical scheme to solve utility-dependent single-objective optimization problems and thus to obtain Pareto optimal points of the original multi-objective problem.
Here, we focus on the differentiable case for both the objectives and the utility function. 
The main idea is that sometimes the explicit treatment of the difficult to address disagreement reference point constraints may be avoided, see Algorithm \ref{alg:one}. This is at the price of the computation of a starting point satisfying the Slater condition. 
Moreover, the utility function under consideration has to additionally possess the so-called barrier property. 
Under these assumptions, we prove the convergence of our scheme towards a solution of the utility-dependent single-objective optimization problem, see Theorem \ref{th:convergence}, and thus to Pareto solutions.
We also devise Algorithm \ref{alg:two} to compute a Slater point that is instrumental to initialize Algorithm \ref{alg:one}, see Theorem \ref{th:interior starting}. 
Note that the combination of Algorithms \ref{alg:one} and \ref{alg:two} allows one to compute Pareto optimal points even in the case where the Slater condition does not hold, see the discussion below Theorem \ref{th:interior starting}. In Section \ref{Sec:numerical}, we consider a portfolio-selection problem \`a la Markowitz, where an additional Environmental Sustainability and Governance-related objective is taken into account, see \cite{lampariello2023solving}, for which we conduct extensive numerical tests on real-world financial datasets.

Finally, we comment on how our scalarization approach via utility functions fits into the existing literature.
The scalarization by means of the Cobb-Douglas utility function coincides with the weighted geometric mean approach introduced in \cite{lootsma1995controlling}.
Note that this approach is often criticized, see e.g. \cite{audet2008multiobjective}, since it introduces additional disagreement reference point constraints. However, our analysis overcomes this obstacle. In fact, the Cobb-Douglas utility function is shown to be a barrier. Hence, under the Slater condition, the disagreement reference point constraints can be neglected, both from the theoretical and practical perspective -- we obtain only the interior solutions.
This fact has been already observed in \cite{cesarone2018risk}, where the case of two objectives with the special choice of parameters has been treated in the context of mathematical finance. Another link we establish is between the compromise programming, see e.g. \cite{zeleny1973compromise}, and our scalarization by means of the CES utility function. Both approaches can be viewed as in some sense dual to each other, see Remark \ref{rem:compr}. Our approach enlarges the parameter space and allows to go beyond the the case of the $p$-norm in compromise programming. Regardless of the mentioned relations, our scalarization approach via utility functions is novel in its generality and falls into the scope of reference point methods. The presented theoretical results are self-contained and open a door for using other utility function -- except of those already studied here -- for scalarization purposes in multi-objective optimization. 

The main contributions of our paper are summarized below.
\begin{itemize}
    \item We study scalarization of multi-objective problems via utility functions with a degree of generality allowing one to consider, for the first time in the literature, some fundamental functions from microeconomics such as CES.
    \item We devise Algorithm \ref{alg:one} that is shown to be provably convergent to Pareto solutions under the conditions identifying the general framework we focus on. 
    We remark that this procedure needs to consider the disagreement reference point constraints only once at the starting iteration, and it can actually disregard them during the ensuing algorithm progress.
    This directly translates to clear numerical advantages.
    \item We develop Algorithm \ref{alg:two} to compute Slater points for the disagreement reference point constraints, that are essential for initializing Algorithm \ref{alg:one}.
    \item We test our numerical procedures on real-world data in a portfolio-selection context where, apart from the classical Risk-Return related objectives, we include a sustainability-oriented criterion. 
    The numerical results confirm all the theoretical insights, as well as the effectiveness of the solution procedures.
\end{itemize}

Our notation is standard. We denote by $\R^m$ the space of $m$-dimensional vectors, by $\R^m_+$ the set of all vectors with nonnegative components, and by $\R^m_{++}$ the set of all vectors with positive components. 
%If the components of $x \in \R^n$ are nonnegative (positive), we write $x \geq 0$ ($x > 0$). Analogously, 
%For $x, y \in \R^m$ we write $x \geq y$ ($x > y$) if it holds $x_i \geq y_i$ ($x_i > y_i$) for all $i =1,\ldots,n$.
The domain of a function $f:\R^n \rightarrow \R\cup\{-\infty\}$ is $\text{dom} f = \{ x \in \R^n: f(x) > -\infty\}$.
Given a continuously differentiable function $f:\R^n \rightarrow \R\cup\{-\infty\}$, $\nabla f(x)$ denotes its gradient at $x \in \text{dom} f$.

\section{Scalarization via utility functions}\label{sec:utilframework}
We address the general multi-objective optimization problem:
\begin{equation}\label{eq:mobj}
\begin{array}{cl}
\underset{x}{\mbox{minimize}} & f(x) \triangleq \big(f_1(x), \ldots, f_m(x)\big)^\mathsf{T}\\
\mbox{s.t.} & x \in K,
\end{array}
\end{equation}
where $K \subseteq \mathbb R^n$ is the feasible set and $f: \mathbb R^n \to \mathbb R^m$ is the  vector of objective functions. We assume that $K$ is closed and convex, and $f_j$, $j=1, \ldots,m$, are convex. As usual in multi-objective optimization, we focus on the classical notions of (weak) Pareto optimality, see e.g. \cite{ehrgott2005multicriteria}.
 
\begin{definition}
A point $\widehat x \in K$ is called Pareto optimal for \eqref{eq:mobj} if $\forall x \in K$:
$$
  \exists j_x \in \{1,\ldots,m\} \, : \, f_{j_x}(\widehat x) < f_{j_x}(x), \, \text{or} \; \;\forall j \in \{1,\ldots,m\}: \; f_{j}(\widehat x) \leq f_{j}(x).
$$
The set of Pareto optimal points of \eqref{eq:mobj} is denoted by $P$.
\end{definition}

\begin{definition}
A point $\widehat x \in K$ is called weak Pareto optimal for \eqref{eq:mobj} if $\forall x \in K:$
$$
\exists j_x \in \{1,\ldots,m\} \, : \, f_{j_x}(\widehat x) \leq f_{j_x}(x).
$$
The set of weak Pareto optimal points of \eqref{eq:mobj} is denoted by $W$.
\end{definition}

Further, let us fix real numbers $a_j \in \R$ for each of the objective function $f_j$, $j =1, \ldots, m$. We are interested in finding (weak) Pareto optimal points $x \in K$, additionally satisfying the disagreement reference point constraints
\[
f_j(x) \leq a_j, j=1, \ldots,m.
\]
In particular, $a_j$ can be set equal to $f_j(\overline x)$ for some $\overline x \in K$, see e.g. \cite{audet2008multiobjective,cesarone2018risk}, and could be interpreted as a reference level for $f_j$. The vector $a\triangleq(a_1,\ldots,a_m)^\mathsf{T}$ can be then thought of as a disagreement point from the bargaining theory, see e.g. \cite{osborne1994course}. Indeed, if the objective functions of the players $1,\ldots, m$ cannot be successively compromised with each other, $\overline x$ is implemented and the payments $a_1, \ldots, a_m$ are realized. In what follows, we assume that the set 
\begin{equation}
    \label{eq:ineq}
   \left\{ x \in K : f_j(x) \leq a_j, j=1,\ldots,m\right\}
\end{equation}
 is nonempty and bounded. The latter is a technical assumption, which is in particular satisfied for any choice of $a$ if every objective function $f_j$ is bounded below and proper on $K$, $j=1,\ldots,m$. Recall that by definition $f_j$ is proper on $K$ if $f_j^{-1}(C)\cap K$ is compact for any compact set $C \subset \R$. Alternatively, just the compactness of $K$ suffices, in order to guarantee that the set in \eqref{eq:ineq} is bounded.
We eventually use a suitable constraint qualification for (\ref{eq:ineq}).

\begin{definition}
The inequality system in (\ref{eq:ineq}) is said to satisfy the Slater condition if there exists $\overline x \in K$ with 
$$
    f_j(\overline x) < a_j, j=1, \ldots,m.
$$
In this case, we call $\overline x$ a Slater point for \eqref{eq:ineq}.
\end{definition}

 Slater condition is a natural assumption, because in case of its violation the search for weak Pareto optimal points becomes a trivial task. 
 
\begin{proposition} \label{prop: Slat Weak}
    If the Slater condition does not hold for (\ref{eq:ineq}), then all its feasible points are weak Pareto optimal. 
\end{proposition}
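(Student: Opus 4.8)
The plan is to argue by contraposition. Rather than working directly with the failure of the Slater condition, I would show that whenever some feasible point of \eqref{eq:ineq} fails to be weak Pareto optimal, a Slater point for \eqref{eq:ineq} must exist; the stated proposition is then immediate.

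First I would fix an arbitrary feasible point $\widehat x$ for \eqref{eq:ineq}, so that $\widehat x \in K$ and $f_j(\widehat x) \leq a_j$ for all $j = 1, \ldots, m$, and assume towards a contradiction that $\widehat x \notin W$. Unwinding the definition of weak Pareto optimality, its negation yields a point $x^\star \in K$ with $f_j(x^\star) < f_j(\widehat x)$ for \emph{every} $j \in \{1, \ldots, m\}$, i.e. $x^\star$ strictly dominates $\widehat x$ in all components.

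Next I would simply chain the two families of inequalities: for each $j$ we have $f_j(x^\star) < f_j(\widehat x) \leq a_j$, hence $f_j(x^\star) < a_j$ for all $j$. Since $x^\star \in K$, this exhibits $x^\star$ as a Slater point for \eqref{eq:ineq}, contradicting the hypothesis that the Slater condition fails. Consequently no feasible point of \eqref{eq:ineq} can fail to be weak Pareto optimal, which is the claim.

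I do not expect any real obstacle here: the argument is a direct manipulation of the definitions and uses neither convexity of the $f_j$ and $K$ nor the nonemptiness/boundedness of \eqref{eq:ineq}. The only step worth stating carefully is the precise negation of ``$\widehat x$ is weak Pareto optimal'' -- namely that $\widehat x \notin W$ means there is some $x \in K$ with $f_j(x) < f_j(\widehat x)$ holding simultaneously for all $j$ -- obtained by negating the $\forall x\,\exists j_x$ quantifier structure in the definition of $W$.
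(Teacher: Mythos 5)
Your proposal is correct and follows essentially the same route as the paper: both argue by contraposition, take a feasible point that is not weak Pareto optimal, extract a strictly dominating $x^\star \in K$ from the negation of the definition of $W$, and chain $f_j(x^\star) < f_j(\widehat x) \leq a_j$ to exhibit $x^\star$ as a Slater point. No gaps.
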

\begin{proof}
    We show the contrapositive: assume $x \in K$ exists such that it is feasible for (\ref{eq:ineq}), but $x \notin W$. Then, there exists $\widetilde x \in K$ such that $f_j(\widetilde x) < f_j(x)$ for every $j=1, \ldots, m$. In view of feasibility of $x$, we also have $f_j(x) \leq a_j$ for every $j=1, \ldots, m$. Altogether, $\widetilde x$ is a Slater point for (\ref{eq:ineq}).
\end{proof}
Proposition \ref{prop: Slat Weak} suggests how the reference level $a$ can be chosen, so that Slater condition is automatically satisfied for (\ref{eq:ineq}). By setting $a_j=f_j(x)$, $j=1.\ldots,m$, with some $x \in K \backslash W$, Slater condition holds for (\ref{eq:ineq}). Otherwise, $x$ -- being feasible for (\ref{eq:ineq}) -- would be weak Pareto optimal due to Proposition \ref{prop: Slat Weak}, a contradiction. Another possibility to properly choose the reference level is to use the nadir $N$ defined as
\[
   N_j \triangleq  \max_{x \in P} f_j(x), j=1, \ldots, m.
\]
By taking $a_j > N_j$, $j=1.\ldots,m$, any Pareto optimal point $\overline x \in P$ is a Slater point for (\ref{eq:ineq}). 

Slater condition is also a rather mild assumption. Aiming to give reasons for this, let us assume for a moment that the functions $f_j$, $j=1,\ldots,m$, are continuously differentiable. Then, it is well-known from the theory of nonlinear programming that the linear independence constraint qualification (LICQ) holds on the solution set of a system of generic inequality constraints, see \cite{jongen2000nonlinear}. Applied to (\ref{eq:ineq}), LICQ at its feasible point $ x \in K$ means that the gradients of the active inequality constraints are linearly independent:
\[
   \nabla f_j(x),  j \in J_0(x), 
\]
where
\[
  J_0(x) \triangleq \left\{ j \in \{1,\ldots,m\} : f_j(x)=a_j\right\}.
\]
Genericity here refers to the fact that the set of continuously differentiable defining functions $f_j$, $j=1,\ldots,m$, for which LICQ is fulfilled at every feasible point of (\ref{eq:ineq}), is open and dense with respect to the Whitney topology. It remains to note that LICQ implies Slater condition, see e.g. \cite{stein2018grund}. Thus, the latter turns out to hold for (\ref{eq:ineq}) in the generic sense. 
%Based on the justifications above, we assume in what follows that the Slater condition holds for (\ref{eq:ineq}). 

We mention that the Slater condition imposes a restriction on the feasible weak Pareto optimal points. Namely, they cannot fulfil all the constraints in \eqref{eq:ineq} with equality. This observation is crucial if trying to later identify weak Pareto optimal points as solutions of particularly scalarized optimization problems. 

\begin{proposition}
    \label{prop:slater-rest}
    Let the inequality system \eqref{eq:ineq} satisfy the Slater condition. If $\widehat x \in W$ is feasible for \eqref{eq:ineq}, then there exists $j_{\widehat x} \in \{1,\ldots, m\}$ such that $f_{j_{\widehat x}}(\widehat x) < a_{j_{\widehat x}}$.
\end{proposition}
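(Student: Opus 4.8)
The plan is to argue by contradiction, exploiting the fact that a weak Pareto optimal point feasible for \eqref{eq:ineq} which saturates \emph{all} the reference constraints would itself be a Slater point, contradicting its weak Pareto optimality in much the same spirit as Proposition \ref{prop: Slat Weak}. So suppose $\widehat x \in W$ is feasible for \eqref{eq:ineq} but $f_j(\widehat x) = a_j$ for every $j \in \{1,\ldots,m\}$, i.e.\ $J_0(\widehat x) = \{1,\ldots,m\}$. By hypothesis the Slater condition holds, so there is $\overline x \in K$ with $f_j(\overline x) < a_j = f_j(\widehat x)$ for all $j$. But this says precisely that $\overline x$ dominates $\widehat x$ strictly in every component, which contradicts $\widehat x \in W$.

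To make this fully rigorous I would first recall the definition of $W$: $\widehat x \in W$ means that for every $x \in K$ there exists an index $j_x$ with $f_{j_x}(\widehat x) \le f_{j_x}(x)$. Applying this to $x = \overline x$ yields some index $j_{\overline x}$ with $f_{j_{\overline x}}(\widehat x) \le f_{j_{\overline x}}(\overline x)$. On the other hand, the Slater point satisfies $f_{j_{\overline x}}(\overline x) < a_{j_{\overline x}}$, and the negated conclusion gives $f_{j_{\overline x}}(\widehat x) = a_{j_{\overline x}}$; chaining these gives $a_{j_{\overline x}} = f_{j_{\overline x}}(\widehat x) \le f_{j_{\overline x}}(\overline x) < a_{j_{\overline x}}$, an outright contradiction. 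Hence the assumption that $\widehat x$ saturates all constraints is untenable, and there must be some $j_{\widehat x}$ with $f_{j_{\widehat x}}(\widehat x) < a_{j_{\widehat x}}$.

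I do not anticipate a genuine obstacle here: the statement is essentially a one-line consequence of combining the definition of weak Pareto optimality with the strict inequalities furnished by the Slater point, and it is structurally parallel to the contrapositive argument already used in Proposition \ref{prop: Slat Weak}. The only mild care needed is bookkeeping of quantifiers — the index $j_x$ in the definition of $W$ depends on the competitor $x$, so one must instantiate $x$ at the Slater point $\overline x$ first and only then extract the offending index. Convexity of $K$ and of the $f_j$ is not even needed for this particular proposition; it is purely a combinatorial/order-theoretic observation about the feasible set and the disagreement vector $a$.
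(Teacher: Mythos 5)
Your proof is correct and follows essentially the same argument as the paper: assume all constraints are saturated at $\widehat x$, note that the Slater point $\overline x$ then strictly dominates $\widehat x$ in every component, and contradict $\widehat x \in W$. The extra care you take in instantiating the quantifier in the definition of $W$ at $\overline x$ is a fine (if slightly more verbose) elaboration of the same one-line contradiction.
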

\begin{proof}
    Assume on the contrary that, due to the feasibility of $\widehat x$ for \eqref{eq:ineq}, we have $f_j(\widehat x) = a_j$ for all $j \in \{1,\ldots,m\}$. But, for a Slater point $\overline{x} \in K$, it also holds $f_j(\overline{x}) < a_j$ for all $j \in \{1,\ldots,m\}$. Altogether, $f_j(\widehat x) > f_j(\overline{x})$ for all $j \in \{1,\ldots,m\}$ contradicts $\widehat x \in W$.
\end{proof}

Now, we are ready to introduce the new scalarization approach for multi-objective optimization. For that, let us consider utility functions $u: \, \mathbb R^m \to \mathbb R \cup \{-\infty\}$. They are typically used in microeconomics for measuring the utility of consumption bundles, i.e. vectors with nonnegative components, see e.g. \cite{silberberg2000structure}. In view of the latter, we assume that the domain of $u$ is the nonnegative orthant, i.e. 
\[
   \text{dom }u = \mathbb R^m_{+}.
\]
We also assume that $u$ is continuous on its domain.
Our scalarization idea consists of solving the following single-objective optimization problem related to \eqref{eq:mobj} and \eqref{eq:ineq}:  
\begin{equation}\label{eq:sobjutil1}
\begin{array}{cl}
\underset{x}{\mbox{maximize}} &  h(x) \triangleq u(a_1- f_1(x), \ldots, a_m- f_m(x))\\
\mbox{s.t.} & x \in K.\\
\end{array}
\end{equation} 

First, we note that solving (\ref{eq:sobjutil1}) leads to the feasible points of (\ref{eq:ineq}).

\begin{proposition}
    \label{prop:feas}
Solutions of \eqref{eq:sobjutil1} are feasible points for \eqref{eq:ineq}.
\end{proposition}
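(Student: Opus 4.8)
The plan is to exploit the fact that, by assumption, $\text{dom } u = \mathbb R^m_+$, so the composite objective $h$ in \eqref{eq:sobjutil1} is finite exactly on the feasible set of \eqref{eq:ineq} and equals $-\infty$ elsewhere. Concretely, $h(x) > -\infty$ if and only if $\big(a_1 - f_1(x), \ldots, a_m - f_m(x)\big) \in \mathbb R^m_+$, which is precisely the system $f_j(x) \le a_j$, $j = 1,\ldots,m$. Thus $\{x \in K : h(x) > -\infty\}$ coincides with the set in \eqref{eq:ineq}, and the claim reduces to showing that any maximizer of $h$ over $K$ attains a value strictly greater than $-\infty$.

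First I would invoke the standing assumption that the set in \eqref{eq:ineq} is nonempty: pick $\overline x \in K$ with $f_j(\overline x) \le a_j$ for all $j$. Then $\big(a_1 - f_1(\overline x), \ldots, a_m - f_m(\overline x)\big)$ lies in $\mathbb R^m_+ = \text{dom } u$, and since $u$ is (real-valued and) continuous on its domain, $h(\overline x) = u\big(a_1 - f_1(\overline x), \ldots, a_m - f_m(\overline x)\big) \in \mathbb R$. Hence the supremum of $h$ over $K$ is at least $h(\overline x) > -\infty$.

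Next, let $\widehat x \in K$ be any solution of \eqref{eq:sobjutil1}. By optimality, $h(\widehat x) \ge h(\overline x) > -\infty$, so $\widehat x \in \text{dom } h$, i.e. $\big(a_1 - f_1(\widehat x), \ldots, a_m - f_m(\widehat x)\big) \in \mathbb R^m_+$. Unfolding this membership gives $f_j(\widehat x) \le a_j$ for every $j = 1,\ldots,m$, which together with $\widehat x \in K$ means exactly that $\widehat x$ is feasible for \eqref{eq:ineq}.

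I do not expect any real obstacle here: the argument is purely a domain bookkeeping observation, the only point requiring care being to rule out that $h \equiv -\infty$ on $K$ (which would make every point vacuously a "maximizer" yet none feasible) — and this is exactly what the nonemptiness hypothesis on \eqref{eq:ineq} rules out. Note also that existence of a solution of \eqref{eq:sobjutil1} is not needed for the statement, since the claim is conditional on $\widehat x$ being a solution.
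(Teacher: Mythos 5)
Your argument is correct and is essentially the paper's own proof: pick a feasible point of \eqref{eq:ineq} (which exists by the standing nonemptiness assumption) to certify that the optimal value of \eqref{eq:sobjutil1} exceeds $-\infty$, then conclude from optimality that $a - f(\widehat x) \in \text{dom}\, u = \mathbb{R}^m_+$. You merely spell out the domain bookkeeping in more detail than the paper does; no substantive difference.
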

\begin{proof}
    Let $x^* \in K$ solve \eqref{eq:sobjutil1}. For a feasible point $\widehat x \in K$ for \eqref{eq:ineq}, which exists by assumption, it holds $a-f(\widehat x) \in \text{dom }u$. Hence, the optimality of $x^*$ provides $a-f(x^*) \in \text{dom }u$ as well. Since $\text{dom }u = \R^m_+$, the assertion follows.
\end{proof}

In order to enforce that feasible points of \eqref{eq:sobjutil1} are Slater, we need an additional assumption on $u$, which is reminiscent of the condition in \cite[Definition 3.5.1]{miettinen2012nonlinear}. Namely, $u$ has to act as a barrier. This is to say that $u$ is constant on the boundary of its domain and grows towards the interior. 

\begin{definition}
    \label{def:barrier}
We say that the utility function $u$ is a barrier if for some $\overline u \in \mathbb R$ it holds:
\begin{itemize}
\item[(i)] $u(y) = \overline u, \quad \forall y \in \text{bd dom }u=\R^m_+ \backslash \R^m_{++}$;
\item[(ii)] $u(y) > \overline u, \quad \forall y \in \text{int dom }u=\R^m_{++}$.
\end{itemize}
\end{definition}

\begin{proposition}
\label{prop:barrier}
    Let the inequality system \eqref{eq:ineq} satisfy the Slater condition, and the utility function $u$ be a barrier. Then, solutions of \eqref{eq:sobjutil1} are Slater points for \eqref{eq:ineq}.
\end{proposition}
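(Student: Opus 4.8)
The plan is to argue by contradiction, exploiting the dichotomy encoded in the barrier property together with the feasibility already guaranteed by Proposition \ref{prop:feas}. Let $x^* \in K$ be a solution of \eqref{eq:sobjutil1}. By Proposition \ref{prop:feas}, $x^*$ is feasible for \eqref{eq:ineq}, so that the point $a - f(x^*)$ lies in $\text{dom }u = \R^m_+$. Suppose, for contradiction, that $x^*$ is not a Slater point; then $f_{j}(x^*) = a_{j}$ for at least one index $j$, which means $a - f(x^*) \in \R^m_+ \setminus \R^m_{++} = \text{bd dom }u$. Invoking part (i) of Definition \ref{def:barrier}, we obtain $h(x^*) = u(a - f(x^*)) = \overline u$.

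Next I would bring in a Slater point $\overline x \in K$, which exists by hypothesis and satisfies $f_j(\overline x) < a_j$ for all $j = 1, \ldots, m$. Consequently $a - f(\overline x) \in \R^m_{++} = \text{int dom }u$, and part (ii) of Definition \ref{def:barrier} yields $h(\overline x) = u(a - f(\overline x)) > \overline u$. Since $\overline x$ is feasible for \eqref{eq:sobjutil1}, this gives $h(\overline x) > \overline u = h(x^*)$, contradicting the optimality of $x^*$. Hence no component of $a - f(x^*)$ can vanish, i.e. $f_j(x^*) < a_j$ for every $j$, which is precisely the claim that $x^*$ is a Slater point for \eqref{eq:ineq}.

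The argument is short, and the step I would be most careful about is the very first one: one must invoke Proposition \ref{prop:feas} to guarantee that $a - f(x^*)$ belongs to $\R^m_+$ at all, so that it is legitimate to speak of this vector as lying on the boundary of $\text{dom }u$ rather than outside the domain, and so that $h(x^*)$ is finite and hence comparable with $h(\overline x)$. Everything else reduces to a direct combination of the two defining inequalities of a barrier, the existence of a Slater point, and the maximality of $x^*$; I do not anticipate any further technical obstacle.
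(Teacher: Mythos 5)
Your proof is correct and uses essentially the same ingredients as the paper's: a Slater point $\overline x$ forces the optimal value above $\overline u$, and the barrier dichotomy then places $a-f(x^*)$ in $\R^m_{++}$. The only cosmetic difference is that you argue by contradiction (via Proposition \ref{prop:feas} and part (i) of Definition \ref{def:barrier}) whereas the paper concludes directly from $h(x^*)\geq h(\overline x)>\overline u$; the two are logically equivalent.
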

\begin{proof}
    Let $\overline{x} \in K$ be a Slater point for \eqref{eq:ineq}. Since then $a-f(\overline{x}) \in \text{int dom }u$, we obtain from Definition \ref{def:barrier} that $h(\overline{x})=u(a-f(\overline{x}))> \overline{u}$. Let $x^*\in K$ solve \eqref{eq:sobjutil1}, i.e. $h(x^*) \geq h(\overline{x})$. Altogether, we deduce that $h(x^*) > \overline{u}$. Again, Definition \ref{def:barrier} provides that $a-f(x^*) \in \text{int dom }u$. By recalling $\text{int dom }u=\R^m_{++}$, the assertion follows.
\end{proof}

In order to relate the solutions of \eqref{eq:sobjutil1} with (weak) Pareto optimal points of \eqref{eq:mobj}, we recall some standard monotonicity properties for utility functions.

\begin{definition} 
Given a set $S$, the utility function $u$ is called
\begin{itemize}
    \item[(i)] monotone on $S$ if 
    \[
    u(y+\delta) \ge u(y) , \quad \forall y \in S, \, \delta \in \Re^m_+: \,y+\delta \in S;
    \]
    \item[(ii)] weakly strictly monotone on $S$ if
    \[
    u(y+\delta) > u(y) \quad \forall y \in S, \, \delta \in \Re^m_{++}: \,y+\delta \in S;
    \]
    \item[(iii)] strictly monotone on $S$ if
    \[
    u(y+\delta) > u(y) \quad \forall y \in S, \, \delta \in \Re^m_+ \setminus \{0\}: \, y+\delta \in S.
    \]
\end{itemize}
%    If $S=\text{dom } u$, we say that $u$ is monotone, weakly strictly monotone, or strictly monotone.
\end{definition}
We observe that strict monotonicity implies weakly strict monotonicity, which in turn yields monotonicity of $u$. Thanks to next Theorem \ref{th:strict}, one is allowed to obtain (weak) Pareto optima of the original multi-objective problem \eqref{eq:mobj} through the single-objective problem \eqref{eq:sobjutil1}.
\begin{theorem}\label{th:strict}
Let $x^*\in K$ solve \eqref{eq:sobjutil1}.
\begin{itemize}
    \item [(i)] If $u$ is weakly strictly monotone on its domain, then $x^* \in W$.
    \item [(ii)] If $u$ is strictly monotone on its domain, then $x^* \in P$.
    \item[(iii)] If $u$ is strictly monotone on the interior of its domain, and $x^*$ is a Slater point for \eqref{eq:ineq}, then $x^* \in P$.
\end{itemize}
\end{theorem}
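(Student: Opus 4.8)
The plan is to prove all three items by contradiction, in each case combining optimality of $x^*$ for \eqref{eq:sobjutil1} with the relevant monotonicity of $u$. The uniform first step is to locate the vector $y \triangleq a - f(x^*)$ in the domain of $u$: by Proposition \ref{prop:feas}, $x^*$ is feasible for \eqref{eq:ineq}, so $y \in \R^m_+ = \text{dom }u$; in item (iii), the assumption that $x^*$ is a Slater point even gives $y \in \R^m_{++} = \text{int dom }u$.

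For item (i), I would suppose $x^* \notin W$ and unwind the definition of weak Pareto optimality to obtain $\tilde x \in K$ with $f_j(\tilde x) < f_j(x^*)$ for all $j$. Then $\delta \triangleq f(x^*) - f(\tilde x) \in \R^m_{++}$, so $a - f(\tilde x) = y + \delta$ again lies in $\R^m_+ = \text{dom }u$, and weak strict monotonicity on the domain forces $h(\tilde x) = u(y + \delta) > u(y) = h(x^*)$, contradicting optimality of $x^*$. For item (ii), the negation of $x^* \in P$ yields $\tilde x \in K$ with $f_j(\tilde x) \le f_j(x^*)$ for all $j$ and strict inequality for at least one index, i.e. $\delta \triangleq f(x^*) - f(\tilde x) \in \R^m_+ \setminus \{0\}$; since $\R^m_+$ is closed under adding nonnegative vectors, $a - f(\tilde x) = y + \delta \in \text{dom }u$, and strict monotonicity on the domain again gives $h(\tilde x) > h(x^*)$, a contradiction. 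Item (iii) repeats this last argument verbatim, except that the Slater property of $x^*$ places $y$ in $\R^m_{++}$, whence $a - f(\tilde x) = y + \delta \in \R^m_{++} = \text{int dom }u$ as well, so that strict monotonicity on the interior of the domain applies and delivers the same contradiction.

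I expect the only subtlety to be the membership check for the perturbed vector $a - f(\tilde x)$ in the set on which monotonicity is assumed. In items (i) and (ii) this is immediate from closedness of $\R^m_+$ under the relevant additions, but in item (iii) it genuinely requires $y \in \R^m_{++}$: adding $\delta \in \R^m_+$ to a point on the boundary of $\R^m_+$ need not land in the interior, so Proposition \ref{prop:feas} alone is insufficient and the Slater hypothesis on $x^*$ is exactly what makes the argument go through. This also explains why item (iii) is stated with its extra hypothesis rather than being subsumed in item (ii).
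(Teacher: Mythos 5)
Your proposal is correct and follows essentially the same route as the paper's proof: contradiction via the negation of (weak) Pareto optimality, the definition of $\delta = f(x^*) - f(\tilde x)$, membership of $a - f(\tilde x)$ in $\text{dom }u$ (respectively its interior, via the Slater hypothesis in item (iii)), and the appropriate monotonicity of $u$ to contradict optimality. Your closing remark on why the Slater assumption is genuinely needed in (iii) matches the paper's (terser) justification that both points then lie in the interior of the domain.
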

\begin{proof}
(i) Assume by contradiction that $x^* \notin W$, then $\widetilde x \in K$ exists such that:
\[
\forall j \in \{1,\ldots,m\} \, : \, f_{j}(\widetilde x) < f_{j}(x^*).
\]
Due to Proposition \ref{prop:feas}, $a-f(x^*) \in \text{dom } u$. Together with $f(\widetilde x) < f(x^*)$, it follows $a-f(\widetilde x) \in \text{dom } u$. By setting $\delta \triangleq f(x^*)-f(\widetilde x)$, we have $\delta \in \R^m_{++}$. The weakly strict monotonicity of $u$ on its domain provides
\[
h(\widetilde x)=u(a-f(\widetilde x))
= u(a-f(x^*)+\delta) > u(a-f(x^*))=
h(x^*),
\]
which contradicts the assumed optimality of $x^*$ for \eqref{eq:sobjutil1}.

(ii) Assume by contradiction that $x^* \notin P$, then $\widetilde x \in K$ exists such that:
\[
\forall j \in \{1,\ldots,m\} \, : \, f_{j}(\widetilde x) \leq f_{j}(x^*), \, \text{and} \; \;\exists j_{\widetilde x} \in \{1,\ldots,m\}: \; f_{j_{\widetilde x}}(\widetilde x) < f_{j_{\widetilde x}}(x^*).
\]
Reasoning similarly to (i), we deduce $a-f(\widetilde x) \in \text{dom} \, u$ from $a-f(x^*) \in \text{dom } u$ and $f(\widetilde x) \leq f(x^*)$. By setting $\delta \triangleq f(x^*)-f(\widetilde x)$, we have now $\delta \in \R^m_+ \backslash\{0\}$.
The strict monotonicity of $u$ on its domain provides
\[
h(\widetilde x)=u(a-f(\widetilde x))
= u(a-f(x^*)+\delta) > u(a-f(x^*))=
h(x^*),
\]
which once again contradicts the assumed optimality of $x^*$ for \eqref{eq:sobjutil1}.

(iii) The proof is the same as (ii) because both $a-f(x^*)$ and $a-f(\widetilde x)$
are in the interior of $\text{dom } u$.
\end{proof}

It is straightforward to see that the (pseudo)concavity of the monotone utility function $u$ is inherited by the objective function $h$. This follows from a general result in convex analysis, see e.g. \cite{avriel2010generalized} and \cite{rockafellar1970convex}. For the sake of completeness, we decided to provide the proof here.  

\begin{proposition} \label{prop: h concave1}
The following statements hold:
\begin{itemize}
    \item[(i)] if $u$ is monotone on its domain and concave, then $h$ is concave;
    \item[(ii)] 
    if $u$ is monotone and pseudoconcave on the interior of its domain, then $h$ is pseudoconcave on the set of the Slater points for \eqref{eq:ineq}, i.e. $\left\{ x \in K : f_j(x) < a_j, j=1,\ldots,m\right\}$.
\end{itemize}
\end{proposition}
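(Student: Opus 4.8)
The plan is to write $h = u \circ g$, where $g \colon \R^n \to \R^m$ is defined by $g(x) \triangleq (a_1-f_1(x),\ldots,a_m-f_m(x))^\mathsf{T}$. Since the $f_j$ are convex, each component $g_j$ is concave; moreover $x \in \text{dom } h$ if and only if $g(x) \in \R^m_+ = \text{dom } u$, and $g$ carries the set of Slater points for \eqref{eq:ineq} into $\R^m_{++} = \text{int dom } u$. Parts (i) and (ii) should then follow by composing these concavity properties of $g$ with the corresponding generalized-concavity and monotonicity properties of $u$, being careful that every argument of $u$ remains in $\text{dom } u$.

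For (i), I would fix $x,y \in \R^n$ and $\lambda \in (0,1)$; the asserted inequality is trivial unless $x,y \in \text{dom } h$, so assume this. Put $z \triangleq \lambda x + (1-\lambda) y$ and $w \triangleq \lambda g(x) + (1-\lambda) g(y)$. Concavity of the $g_j$ gives $g(z) \ge w$ componentwise, and $w \in \R^m_+$ as a convex combination of vectors in $\R^m_+$; hence $w, g(z) \in \text{dom } u$. Applying monotonicity of $u$ on its domain (to the increment $g(z)-w \in \R^m_+$) and then concavity of $u$ yields
\[
h(z) = u(g(z)) \ge u(w) = u\big(\lambda g(x)+(1-\lambda)g(y)\big) \ge \lambda u(g(x)) + (1-\lambda) u(g(y)) = \lambda h(x) + (1-\lambda) h(y),
\]
so $h$ is concave.

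For (ii), I would restrict to the convex set $S \triangleq \{x \in K : f_j(x) < a_j,\ j=1,\ldots,m\}$ of Slater points; there $g(x) \in \R^m_{++}$, so $h = u \circ g$ is differentiable on $S$ with $\nabla h(x) = Dg(x)^\mathsf{T}\nabla u(g(x))$ by the chain rule, where $Dg(x)$ is the Jacobian of $g$ at $x$ (its $j$-th row being $\nabla g_j(x)^\mathsf{T}$). Let $x,y \in S$ satisfy $\nabla h(x)^\mathsf{T}(y-x) \le 0$. Monotonicity of $u$ on $\R^m_{++}$ gives $\nabla u(g(x)) \ge 0$, and concavity of each $g_j$ gives $\nabla g_j(x)^\mathsf{T}(y-x) \ge g_j(y)-g_j(x)$, i.e. $Dg(x)(y-x) \ge g(y)-g(x)$ componentwise. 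Hence
\[
0 \ge \nabla h(x)^\mathsf{T}(y-x) = \nabla u(g(x))^\mathsf{T} Dg(x)(y-x) \ge \nabla u(g(x))^\mathsf{T}\big(g(y)-g(x)\big).
\]
Since $g(x),g(y) \in \text{int dom } u$ and $u$ is pseudoconcave there, $\nabla u(g(x))^\mathsf{T}(g(y)-g(x)) \le 0$ forces $u(g(y)) \le u(g(x))$, that is $h(y) \le h(x)$; this is exactly pseudoconcavity of $h$ on $S$.

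The genuinely routine ingredients are the chain rule and the first-order concavity estimates for the $g_j$. The two points deserving care are: in (i), performing the two steps (monotonicity first, then concavity) in the correct order and checking that $w$ and $g(z)$ stay in $\R^m_+$; and in (ii), the role of the Slater set, since only on $S$ does $g$ take values in $\text{int dom } u$ — which is what makes $h$ differentiable and legitimizes invoking both the pseudoconcavity hypothesis on $u$ and the implication (monotonicity $\Rightarrow \nabla u \ge 0$). I expect this last point to be the main obstacle, in the sense that the restriction to Slater points is essential there and the argument would break without it.
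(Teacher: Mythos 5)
Your part (i) is essentially the paper's own argument: the same two-step chain (first-order convexity estimate for the $f_j$ plus monotonicity of $u$, then concavity of $u$), with the same care about the trivial case where one of the points lies outside $\mathrm{dom}\,h$ and about the convex combination staying in $\mathrm{dom}\,u$. Nothing to add there.

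Part (ii) is where you diverge, and the divergence matters. You prove the implication using the \emph{gradient} characterization of pseudoconcavity ($\nabla h(x)^\mathsf{T}(y-x)\le 0 \Rightarrow h(y)\le h(x)$) via the chain rule $\nabla h(x)=Dg(x)^\mathsf{T}\nabla u(g(x))$, the sign condition $\nabla u\ge 0$ from monotonicity, and the first-order concavity estimate $Dg(x)(y-x)\ge g(y)-g(x)$. That chain of inequalities is correct, but it presupposes that $u$ is differentiable on $\R^m_{++}$ and that the $f_j$ are differentiable. Neither hypothesis is in force where the proposition is stated: differentiability is only introduced in Section \ref{sec:algo}, while Proposition \ref{prop: h concave1} sits in Section \ref{sec:utilframework}, where the $f_j$ are merely convex and $u$ merely continuous on its domain. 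The paper's proof of (ii) instead runs the \emph{same} manipulation as in (i) and invokes a differentiability-free characterization of pseudoconcavity: $u(y_1)>u(y_2)$ implies $u((1-\lambda)y_1+\lambda y_2)\ge u(y_2)+\lambda(1-\lambda)b(y_1,y_2)$ for some positive $b(y_1,y_2)$, which then transfers verbatim to $h$ on the (open, convex) Slater set. So your argument proves a formally narrower statement than the one asserted. In its favour, your version is the one that is actually exploited downstream (Theorem \ref{th: h concave} uses the stationarity form of pseudoconcavity of $h$, and by then differentiability has been assumed), and your identification of the Slater set as exactly the place where $g$ maps into $\mathrm{int}\,\mathrm{dom}\,u$ is the right key observation. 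But as a proof of the proposition as stated, you should either add the differentiability hypotheses explicitly or switch to a definition of pseudoconcavity that does not require gradients, as the paper does.
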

\begin{proof}
%Due to the convexity of $K$, we only need to show that $h$ is concave. 
(i) For every $\widehat x, \, \widetilde x \in \mathbb R^n$, and $\lambda \in (0,1)$, the convexity of each $f_j$, $j=1, \ldots,m$, yields
\begin{equation} \label{eq: f convexity2}
f_j ( \lambda \widehat x + (1 - \lambda) \widetilde x) \le  \lambda f_j(\widehat x) + (1 - \lambda) f_j(\widetilde x).
\end{equation}
Thus, we have
\[
\begin{array}{rcl}
h(\lambda \widehat x + (1 - \lambda) \widetilde x) & = & u(a-f(\lambda \widehat x + (1 - \lambda) \widetilde x))
 \ge  u(a-\lambda f(\widehat x) - (1 - \lambda) f(\widetilde x))\\
& = & u(\lambda (a-f(\widehat x)) + (1 - \lambda) (a-f(\widetilde x)))\\
& \ge & \lambda u(a-f(\widehat x)) + (1 - \lambda) u(a-f(\widetilde x))= \lambda h(\widehat x) + (1 - \lambda) h(\widetilde x),
 \end{array}
\]
where the first inequality is due to \eqref{eq: f convexity2} and the monotonicity of $u$ on its domain, and the last one follows from the concavity of $u$. Note that if $a-f(\widehat x) \not \in \text{dom }u$ or $a-f(\widetilde x) \not \in \text{dom }u$, the inequality to be shown is trivially satisfied. Otherwise, $\lambda (a-f(\widehat x)) + (1 - \lambda) (a-f(\widetilde x)) \in \text{dom } u$, due to the convexity of the domain of $u$, and $a-f(\lambda \widehat x + (1 - \lambda) \widetilde x) \in \text{dom }u$, due to \eqref{eq: f convexity2}.
%\[
%  f((1 - \lambda) \widehat x + \lambda \widetilde x) \leq (1 - \lambda)f( \widehat x) + \lambda f(\widetilde x) \leq (1 - \lambda)a + \lambda a =a.
%\]

(ii) Let $\widehat x, \, \widetilde x \in K$ be Slater points for \eqref{eq:ineq} with $h(\widehat x) > h(\widetilde x)$, and $\lambda \in (0,1)$. As above, we have
\[
\begin{array}{rcl}
h((1 - \lambda) \widehat x + \lambda \widetilde x) & = & u(a-f((1 - \lambda) \widehat x + \lambda \widetilde x))
 \ge  u(a-(1 - \lambda)f(\widehat x) - \lambda f(\widetilde x))\\
& = & u((1 - \lambda) (a-f(\widehat x)) + \lambda (a-f(\widetilde x)))\\
& \ge &  u(a-f(\widetilde x)) + \lambda(1 - \lambda) b(\widehat x, \widetilde x) = h(\widetilde x)+\lambda(1 - \lambda) b(\widehat x, \widetilde x)
 \end{array}
\]
with a positive term $b(\widehat x, \widetilde x)$, in general depending on $\widetilde x$ and $\widehat x$.
Here, the first inequality is due to \eqref{eq: f convexity2} and the monotonicity of $u$ on the interior of its domain, and the last one follows from the pseudoconcavity of $u$ by using $u(a-f(\widehat x))> u(a-f(\widetilde x))$. It remains to note that the set of the Slater points for \eqref{eq:ineq} is open and convex.
\end{proof}

Next, we turn our attention to the solvability of the single-objective problem \eqref{eq:sobjutil1}.

\begin{proposition}
    \label{prop:exist}
The solution set of \eqref{eq:sobjutil1} is nonempty.
\end{proposition}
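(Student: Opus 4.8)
The plan is to recast the maximization of $h$ over $K$ as the maximization of a continuous function over a compact set, and then to invoke the Weierstrass extreme value theorem.

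First I would exploit the fact that $\text{dom }u = \R^m_+$: the objective $h(x) = u(a-f(x))$ is real-valued exactly when $a-f(x) \in \R^m_+$, i.e. exactly on the set $F \triangleq \{x \in K : f_j(x) \leq a_j,\ j=1,\ldots,m\}$ from \eqref{eq:ineq}, and equals $-\infty$ on $K \setminus F$. By the standing assumption $F$ is nonempty, hence $\sup_{x \in K} h(x) = \sup_{x \in F} h(x) > -\infty$; consequently every maximizer of $h$ over $F$ is a maximizer over $K$, and, conversely, by Proposition \ref{prop:feas} every maximizer over $K$ lies in $F$. So it suffices to prove that $h$ attains its maximum on $F$.

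Next I would check that $F$ is compact and that $h$ is continuous on $F$. Each $f_j$ is a finite convex function on $\R^n$, hence continuous, so each sublevel set $\{x : f_j(x) \leq a_j\}$ is closed; intersecting with the closed set $K$ shows $F$ is closed, and $F$ is bounded by assumption, hence compact. On $F$ the map $x \mapsto a - f(x)$ is continuous and takes values in $\R^m_+ = \text{dom }u$, and $u$ is continuous on $\R^m_+$; therefore the composition $h$ is continuous on $F$. Applying Weierstrass to the continuous function $h$ on the nonempty compact set $F$ produces a maximizer, which, by the reduction above, solves \eqref{eq:sobjutil1}.

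The argument is essentially routine; the only delicate point is the bookkeeping with the extended-real values of $h$ — namely, justifying that passing to $F$ loses no generality (because $h \equiv -\infty$ off $F$ while $F \neq \emptyset$) and that the continuity of $u$ is needed, and available, only on its domain $\R^m_+$, into which $a - f(\cdot)$ maps $F$. There is no substantive obstacle beyond this.
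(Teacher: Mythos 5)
Your proof is correct and follows essentially the same route as the paper: reduce to a nonempty compact set on which $h$ is continuous and apply the Weierstrass theorem. The only (immaterial) difference is that you work directly with the full feasible set $F$ of \eqref{eq:ineq}, whereas the paper restricts further to the upper level set ${\mathcal{L}}_{\widehat x}\subseteq F$ of a feasible point; both sets are compact for the same reasons, and either choice completes the argument.
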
 
\begin{proof}
Let us consider the upper level set
\begin{equation}\label{eq:lelvelset}
   {\mathcal{L}}_{\widehat x} \triangleq \{x \in K: h(x) \geq h(\widehat x) \},
\end{equation}
where $\widehat x \in K$ is a feasible point for \eqref{eq:ineq}. Since 
$a-f(\widehat x) \in \text{dom }u$, it holds for every $x \in {\mathcal{L}}_{\widehat x}$ that $a-f(x) \in \text{dom }u$. Recalling $\text{dom }u = \R^m_+$, we obtain
\[
  {\mathcal{L}}_{\widehat x} \subset \left\{ x \in K : f(x) \leq a\right\}.
\]
The assumed boundedness of the latter set provides that ${\mathcal{L}}_{\widehat x}$ is bounded as well. Further, we show that ${\mathcal{L}}_{\widehat x}$ is closed. For that, let an arbitrary sequence $x^k \in {\mathcal{L}}_{\widehat x}$ converge to $\widetilde x$ for $k \rightarrow \infty$. Since $x^k \in K$ and $K$ is closed, we have $\widetilde x\in K$. Moreover,  by arguing as above, $a-f(x^k) \in \text{dom } u$. From the closedness of $\text{dom } u$, we obtain $a-f(\widetilde x) \in \text{dom } u$.
Since it has been assumed that $u$ is continuous on its domain, 
\[
\lim_{k \rightarrow \infty} h(x^k) = \lim_{k \rightarrow \infty} u(a-f(x^k)) = u(a-f(\widetilde x)) =h(\widetilde x).
\]
Hence,  by taking the limit $k \rightarrow \infty$ in $h(x^k) \geq h(\widehat x)$, we get $h(\widetilde x) \geq h(\widehat x)$. Altogether, $\widetilde x \in {\mathcal{L}}_{\widehat x}$ follows.  
Obviously, ${\mathcal{L}}_{\widehat x}$ contains the solution set of \eqref{eq:sobjutil1}.
The existence of a solution of \eqref{eq:sobjutil1} is then due to the Weierstrass theorem by the continuity of $h$ on the nonempty and compact set ${\mathcal{L}}_{\widehat x}$.
\end{proof}

Let us additionally address the unique solvability of the single-objective problem \eqref{eq:sobjutil1}. It turns out that, under the strict concavity of $u$, all solutions of \eqref{eq:sobjutil1} induce the same values of $f_j$'s.

\begin{proposition} \label{prop: h unique1}
    Let $\widehat x \in K$ and $\widetilde x \in K$ both solve \eqref{eq:sobjutil1}.
         \begin{itemize}
         \item[(i)] If $u$ is monotone and strictly concave on its domain, then $f(\widehat x)=f(\widetilde x)$.
        \item[(ii)] If $u$ is monotone and strictly concave on the interior of its domain, and $\widehat x, \widetilde x$ are Slater points for \eqref{eq:ineq}, then $f(\widehat x)=f(\widetilde x)$. 
    \end{itemize}
%   Let $u$ be monotone and strictly concave on its domain. If $\widehat x \in K$ and $\widetilde x \in K$ both solve \eqref{eq:sobjutil1}, then it holds $f(\widehat x)=f(\widetilde x)$.
\end{proposition}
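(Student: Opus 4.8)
The plan is to argue by contradiction in both parts, using the elementary fact that a strictly concave function cannot take its maximal value at two points whose images under the map $x \mapsto a - f(x)$ are distinct. Suppose $\widehat x,\widetilde x \in K$ both solve \eqref{eq:sobjutil1}, and set $h^\star \triangleq h(\widehat x) = h(\widetilde x)$ for the common optimal value. Assume for contradiction that $f(\widehat x) \neq f(\widetilde x)$, equivalently $a - f(\widehat x) \neq a - f(\widetilde x)$. Fix $\lambda \in (0,1)$ (e.g. $\lambda = \tfrac12$) and put $x_\lambda \triangleq \lambda \widehat x + (1-\lambda)\widetilde x$; since $K$ is convex, $x_\lambda \in K$ is feasible for \eqref{eq:sobjutil1}. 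By Proposition \ref{prop:feas}, both $a - f(\widehat x)$ and $a - f(\widetilde x)$ lie in $\text{dom }u = \R^m_+$.

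Next I would reproduce the chain of inequalities from the proof of Proposition \ref{prop: h concave1}. Convexity of the $f_j$ via \eqref{eq: f convexity2} gives, componentwise, $a - f(x_\lambda) \geq \lambda\big(a - f(\widehat x)\big) + (1-\lambda)\big(a - f(\widetilde x)\big)$, and the right-hand side lies in $\text{dom }u$ by convexity of $\R^m_+$. Monotonicity of $u$ on its domain then yields $h(x_\lambda) = u\big(a - f(x_\lambda)\big) \geq u\big(\lambda(a - f(\widehat x)) + (1-\lambda)(a - f(\widetilde x))\big)$, and strict concavity of $u$, applied to the two \emph{distinct} points $a - f(\widehat x)$ and $a - f(\widetilde x)$, gives $u\big(\lambda(a - f(\widehat x)) + (1-\lambda)(a - f(\widetilde x))\big) > \lambda u(a - f(\widehat x)) + (1-\lambda) u(a - f(\widetilde x)) = h^\star$. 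Combining, $h(x_\lambda) > h^\star$, which contradicts the optimality of $\widehat x$ and $\widetilde x$ for \eqref{eq:sobjutil1}. Hence $f(\widehat x) = f(\widetilde x)$, proving (i).

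For (ii) the same argument applies once one checks that all the relevant arguments of $u$ stay in $\text{int dom }u = \R^m_{++}$, so that the interior-restricted versions of monotonicity and strict concavity are legitimately invoked. Since $\widehat x,\widetilde x$ are Slater points for \eqref{eq:ineq}, we have $a - f(\widehat x), a - f(\widetilde x) \in \R^m_{++}$; their convex combination lies in $\R^m_{++}$ because this set is convex, and $a - f(x_\lambda)$ dominates that combination componentwise, hence also lies in $\R^m_{++}$ — in particular $x_\lambda$ is itself a Slater point. With every argument of $u$ in the interior, the displayed inequalities carry over verbatim and yield the same contradiction. The only point demanding care — and the closest thing to an obstacle — is precisely this domain bookkeeping in (ii): one must verify not merely that the endpoints map into $\R^m_{++}$ but that the convex combination and the point $a - f(x_\lambda)$ do as well, since the interior-restricted monotonicity by definition requires both of its arguments to lie in $S = \R^m_{++}$; part (i) is the benign special case where $S$ is the whole domain and no such check is needed. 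I would also flag explicitly that strict concavity is used with genuinely distinct points, which is exactly where the standing assumption $f(\widehat x) \neq f(\widetilde x)$ is consumed.
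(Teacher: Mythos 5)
Your proof is correct and follows essentially the same route as the paper's: contradiction via a convex combination of the two solutions, the inequality chain from Proposition \ref{prop: h concave1} with monotonicity, and strict concavity applied to the distinct points $a-f(\widehat x)\neq a-f(\widetilde x)$ to get $h(x_\lambda)>h^\star$. The paper simply fixes $\lambda=\tfrac12$ and compresses part (ii) to ``analogous''; your explicit domain bookkeeping for the interior case is a welcome elaboration, not a deviation.
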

\begin{proof}
   (i) Assume on the contrary that $f(\widehat x) \neq f(\widetilde x)$. From Proposition \ref{prop:feas} we have $a-f(\widehat x) \in \text{dom }u$ and $a-f(\widetilde x) \in \text{dom }u$. Further, we define $x^*\triangleq\frac{1}{2} \widehat x+\frac{1}{2} \widetilde x$ as their average.
   As in the proof of Proposition \ref{prop: h concave1}, it follows $\frac{1}{2}(a-f(\widehat x)) + \frac{1}{2} (a-f(\widetilde x)) \in \text{dom } u$ and $a-f(x^*) \in \text{dom }u$. Analogously, monotonicity and strict concavity of $u$ on its domain imply
   \[
     h(x^*) > \frac{1}{2} h(\widehat x) + \frac{1}{2} h(\widetilde x),
   \]
   where we used that $a-f(\widehat x) \neq a-f(\widetilde x)$. The derived inequality contradicts the fact that for the optimal value of \eqref{eq:sobjutil1} holds $h(\widehat x)=h(\widetilde x)$.

   (ii) The proof is analogous to (ii), by taking into account that $\text{int dom } u=\R^m_{++}$.
\end{proof}

\section{Examples of utility functions}
\label{sec:examples}

For classical utility functions, the conditions identifying the theoretical framework in Section \ref{sec:utilframework} are discussed. To be specific, we elaborate, for suitable choices of the defining parameters, on the monotonicity, concavity, continuity, and barrier properties of these utility functions. Additionally, we show to which extent any (weak) Pareto optimal point can be obtained as a solution of scalarization via the corresponding utility function. We consider Cobb-Douglas, Leontief, and CES utility functions. The latter includes also the linear utility function as a special case.

\subsection{Cobb-Douglas utility function}
First, we consider the Cobb-Douglas utility function, which is widely used for modeling purposes within the economic theory, see e.g. \cite{mascollel1995micro, silberberg2000structure}:
\begin{equation}\label{eq:CobbDoug}
u_{\text{CD}}(y) \triangleq \left\{\begin{array}{cl} \displaystyle
\prod_{j=1}^m y_j^{\alpha_j} & \text{if} \; y \ge 0\\ 
-\infty & \text{otherwise},
\end{array}\right.	
\end{equation}
where the parameters $\alpha_j \geq 0$, $j=1,\ldots,m$, are interpreted as elasticities with respect to the inputs $y_j$'s. 
Basic properties of the Cobb-Douglas utility function are listed in Proposition \ref{prop:list-cd}.

\begin{proposition} \label{prop:list-cd}
 The following statements hold:
    \begin{itemize}
      \item[(i)] $\text{dom }u_{\text{CD}}=\R^m_{+}$; 
      \item[(ii)] $u_{\text{CD}}$ is continuous on its domain;
      \item[(iii)] $u_{\text{CD}}$ is continuously differentiable on the interior of its domain;
      \item[(iv)] if $\alpha_j > 0$ for all $j \in \{1,\ldots,m\}$, then $u_\text{CD}$ is a barrier with $\overline{u}=0$.
    \end{itemize}
\end{proposition}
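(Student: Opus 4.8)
The plan is to verify the four items of Proposition \ref{prop:list-cd} separately, each of which follows directly from the definition \eqref{eq:CobbDoug} by elementary analysis, so I would keep the exposition brief. For (i), I would simply observe that for every $y \in \R^m_+$ the product $\prod_{j=1}^m y_j^{\alpha_j}$ is a finite nonnegative number (adopting the convention $0^0 = 1$ when some $\alpha_j = 0$), hence $u_{\text{CD}}(y) > -\infty$, whereas for $y \notin \R^m_+$ one has $u_{\text{CD}}(y) = -\infty$ by definition; this yields $\text{dom }u_{\text{CD}} = \R^m_+$. For (ii), I would note that each coordinate map $t \mapsto t^{\alpha_j}$ is continuous on $[0,\infty)$ -- it is the constant $1$ when $\alpha_j = 0$, and the usual power function, continuous up to and including $t=0$, when $\alpha_j > 0$ -- so $u_{\text{CD}}$ restricted to $\R^m_+$ is a finite product of the maps $y \mapsto y_j^{\alpha_j}$, each continuous on $\R^m_+$, and is therefore continuous.

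For (iii), on $\R^m_{++}$ I would rewrite $u_{\text{CD}}(y) = \exp\bigl(\sum_{j=1}^m \alpha_j \ln y_j\bigr)$, which exhibits it as a composition of $C^\infty$ maps on $\R^m_{++}$ and hence continuously differentiable there; equivalently one checks that the partial derivatives $\partial u_{\text{CD}}(y)/\partial y_j = (\alpha_j/y_j)\, u_{\text{CD}}(y)$ are continuous on $\R^m_{++}$. For (iv), assuming now $\alpha_j > 0$ for all $j$ and taking $\overline u = 0$: if $y \in \text{bd dom }u_{\text{CD}} = \R^m_+ \setminus \R^m_{++}$, then $y_{j_0} = 0$ for some index $j_0$, and since $\alpha_{j_0} > 0$ the factor $y_{j_0}^{\alpha_{j_0}}$ vanishes, so $u_{\text{CD}}(y) = 0$; if instead $y \in \text{int dom }u_{\text{CD}} = \R^m_{++}$, every factor $y_j^{\alpha_j}$ is strictly positive, hence $u_{\text{CD}}(y) > 0$. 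This is precisely Definition \ref{def:barrier} with $\overline u = 0$.

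I do not expect any genuine obstacle here; the only points that deserve a word of care are the convention $0^0 = 1$ used to make sense of the factors with $\alpha_j = 0$ in items (i)--(iii), and the fact that $t \mapsto t^{\alpha_j}$ fails to be differentiable at $t = 0$ when $0 < \alpha_j < 1$, which is exactly why differentiability in (iii) is asserted only on the interior of the domain. Keeping the two parameter regimes distinct -- $\alpha_j \ge 0$ throughout (i)--(iii), but $\alpha_j > 0$ in (iv) -- is the one thing to watch, since in (iv) a single vanishing $\alpha_j$ would destroy the barrier property by making $u_{\text{CD}}$ independent of the corresponding coordinate.
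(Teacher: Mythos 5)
Your proof is correct. The paper in fact states Proposition \ref{prop:list-cd} without any proof, treating these properties as elementary consequences of the definition \eqref{eq:CobbDoug}; your verification (including the $0^0=1$ convention for vanishing exponents, the non-differentiability of $t\mapsto t^{\alpha_j}$ at $t=0$ explaining why (iii) is restricted to the interior, and the role of $\alpha_j>0$ in (iv)) is exactly the standard argument the authors implicitly rely on.
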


The monotonicity properties of the Cobb-Douglas utility function are given in Proposition \ref{prop: cobb mono1}.

\begin{proposition} \label{prop: cobb mono1}
    The following statements hold:
    \begin{itemize}
        \item[(i)] $u_{\text{CD}}$ is monotone on its domain;
        \item[(ii)] if $\exists \overline\jmath\in\{1,\ldots,m\}:\alpha_{\overline\jmath} > 0$, then $u_{\text{CD}}$ is weakly strictly monotone on its domain;
        \item[(iii)] if $\alpha_j > 0$ for all $j \in \{1,\ldots,m\}$, then $u_{\text{CD}}$ is strictly monotone on the interior of its domain.
    \end{itemize}
\end{proposition}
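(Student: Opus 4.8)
The plan is to reduce all three statements to the elementary monotonicity of the one-variable power map $t\mapsto t^{\alpha}$ on $[0,\infty)$ together with a factor-by-factor comparison of the product defining $u_{\text{CD}}$. Throughout I would use the convention $0^0=1$, so that $u_{\text{CD}}(y)=\prod_{j=1}^m y_j^{\alpha_j}\ge 0$ for every $y\in\R^m_+$, and observe that $u_{\text{CD}}(y)=0$ precisely when $y_j=0$ for some $j$ with $\alpha_j>0$. For (i), I would fix $y\in\R^m_+$ and $\delta\in\R^m_+$; since $\alpha_j\ge 0$, the map $t\mapsto t^{\alpha_j}$ is nondecreasing on $[0,\infty)$, hence $(y_j+\delta_j)^{\alpha_j}\ge y_j^{\alpha_j}\ge 0$ for each $j$, and multiplying these inequalities between nonnegative numbers gives $u_{\text{CD}}(y+\delta)\ge u_{\text{CD}}(y)$.

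For (ii), fix $y\in\R^m_+$ and $\delta\in\R^m_{++}$, and note that $y+\delta\in\R^m_{++}$, so $u_{\text{CD}}(y+\delta)>0$. I would split into two cases. If $y_j=0$ for some $j$ with $\alpha_j>0$, then $u_{\text{CD}}(y)=0<u_{\text{CD}}(y+\delta)$ and we are done. Otherwise $y_j>0$ for every $j$ with $\alpha_j>0$, so $u_{\text{CD}}(y)>0$ and we may write $u_{\text{CD}}(y+\delta)/u_{\text{CD}}(y)=\prod_{j:\,\alpha_j>0}\big((y_j+\delta_j)/y_j\big)^{\alpha_j}$; every factor is $\ge 1$, and the factor for the index $\overline\jmath$ with $\alpha_{\overline\jmath}>0$ is strictly greater than $1$ because $(y_{\overline\jmath}+\delta_{\overline\jmath})/y_{\overline\jmath}>1$ and $\alpha_{\overline\jmath}>0$. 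Hence the ratio exceeds $1$, i.e. $u_{\text{CD}}(y+\delta)>u_{\text{CD}}(y)$.

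For (iii), fix $y\in\R^m_{++}$ and $\delta\in\R^m_+\setminus\{0\}$; then $y+\delta\in\R^m_{++}$, and since all $\alpha_j>0$ we have $u_{\text{CD}}(y)>0$. As in (ii), $u_{\text{CD}}(y+\delta)/u_{\text{CD}}(y)=\prod_{j=1}^m\big((y_j+\delta_j)/y_j\big)^{\alpha_j}$ with every factor $\ge 1$; choosing an index $j_0$ with $\delta_{j_0}>0$ (which exists since $\delta\ne 0$) makes that factor strictly larger than $1$, so the product exceeds $1$ and $u_{\text{CD}}(y+\delta)>u_{\text{CD}}(y)$.

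The argument is essentially routine; the only point that deserves care is the bookkeeping of zero coordinates in (ii), where $u_{\text{CD}}(y)$ may vanish and the ratio manipulation is unavailable — this is exactly what forces the two-case split, and it is the only place where the weaker hypothesis ``some $\alpha_{\overline\jmath}>0$'' (as opposed to ``all $\alpha_j>0$'') is actually exploited. In (iii) the restriction to $\R^m_{++}$ removes this difficulty, which is why strict monotonicity can only be claimed on the interior of the domain.
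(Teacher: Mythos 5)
Your proof is correct and follows essentially the same route as the paper's: both arguments reduce each statement to a factor-by-factor comparison of the product $\prod_j y_j^{\alpha_j}$ via the monotonicity of $t\mapsto t^{\alpha_j}$ on $[0,\infty)$, with the indices split according to whether $\alpha_j$ vanishes. Your explicit two-case treatment of zero coordinates in (ii) is in fact slightly more careful than the paper's terse multiplication of strict inequalities, but it is the same underlying argument, so there is nothing further to add.
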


\begin{proof}
   (i) For all $y\in \text{dom }u_\text{CD}$ and $\delta \in \mathbb R^m_+$, $y+\delta \in \text{dom }u_{\text{CD}}$, and we have
    \begin{equation*} \label{eq:cobb mon}
    u_{\text{CD}}(y+\delta) = \prod_{j=1}^m (y_j + \delta_j)^{\alpha_j} \geq \prod_{j=1}^m y_j^{\alpha_j} = u_{\text{CD}}(y),
    \end{equation*}
    since $(y_j+\delta_j)^{\alpha_j} \geq  y_j^{\alpha_j} \geq 0$ for all $j \in \{1,\ldots,m\}$. 
    
    (i) Defining $J_{>0}\triangleq \{j \in \{1,\ldots,m \}: \alpha_j >0\}\neq \emptyset$ and $J_{=0}\triangleq \{j \in \{1,\ldots,m \}: \alpha_j =0\}$, for all $y\in \text{dom }u_\text{CD}$ and $\delta \in \mathbb R^m_{++}$, $y+\delta \in \text{int dom }u_\text{CD}$, we have 
    \[
    u_{\text{CD}}(y+\delta) = \prod_{j\in J_{>0}} (y_j + \delta_j)^{\alpha_j} > \prod_{j\in J_{>0}} y_j^{\alpha_j} = u_{\text{CD}}(y),
    \]
    since $(y_j + \delta_j)^{\alpha_j} =  y_j^{\alpha_j}=1$, for all ${j\in J_{=0}}$,
    and $(y_j+\delta_j)^{\alpha_j} > y_j^{\alpha_j} \geq 0$ for all $j\in J_{>0}$.

    (iii) For all $y\in \text{int dom }u_\text{CD}$ and $\delta \in \mathbb R^m_{+}\setminus\{0\}$, $y+\delta \in \text{int dom }u_\text{CD}$, $\exists \overline{\jmath} \in \{1,\ldots,m \}$ such that $\delta_{\overline{\jmath}}>0$, and we have
    \[
    u_{\text{CD}}(y+\delta) = (y_{\overline{\jmath}}+\delta_{\overline{\jmath}})^{\alpha_{\overline{\jmath}}}\prod_{j\neq \overline{\imath}} (y_j + \delta_j)^{\alpha_j} > y_{\overline{\jmath}}^{\alpha_{\overline{\jmath}}}\prod_{j\neq \overline{\jmath}} y_j^{\alpha_j} = u_{\text{CD}}(y),
    \]
    since $(y_{\overline{\jmath}}+\delta_{\overline{\jmath}})^{\alpha_{\overline{\jmath}}} > y_{\overline{\jmath}}^{\alpha_{\overline{\jmath}}}>0$, and $\prod_{j\neq \overline{\jmath}} ( y_j + \delta_j)^{\alpha_j} \geq \prod_{j\neq \overline{\jmath}} y_j^{\alpha_j}>0$.
\end{proof}

In Proposition \ref{prop: cobb conc1} we recall what is known in the economic literature on the concavity properties of $u_\text{CD}$. They vary depending on the sum of the elasticities
    \[
       \alpha\triangleq \sum_{j=1}^m\alpha_j.
    \]
Although the proofs here can be traced back to e.g. \cite[Lemma 5.11 (a), Lemma 5.14 (a)]{avriel2010generalized} and \cite[Theorem 2.1]{avvakumov2010profit}, we decided to show these statements in Appendix for the sake of completeness.

%This assertion can be traced back to \cite[Lemma 5.11 (a)]{avriel2010generalized}. Although the proof can be traced back to \cite[Lemma 5.14 (a)]{avriel2010generalized}, we decided to show this statement for the sake of completeness. For the proof see \cite[Theorem 2.1]{avvakumov2010profit}. 

\begin{proposition}\label{prop: cobb conc1}
    The following statements hold:
    \begin{enumerate}
    \item[(i)]
    if $\alpha \leq 1$, then $u_\text{CD}$ is concave;
    \item[(ii)]
    if $\alpha_j > 0$ for all $j \in \{1,\ldots,m\}$ and $\alpha < 1$, then $u_\text{CD}$ is strictly concave on the interior of its domain;
    \item[(iii)]
    if $\alpha >1$, then $u_\text{CD}$ is pseudoconcave on the interior of its domain.
    \end{enumerate} 
\end{proposition}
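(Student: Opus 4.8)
The plan is to establish (i) and (ii) by a second-order analysis on the open orthant $\R^m_{++}$, where $u_{\text{CD}}$ is smooth (Proposition \ref{prop:list-cd}(iii)), and then to obtain (iii) from a general fact about strictly increasing differentiable transformations of concave functions.

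For the interior analysis I would write $u_{\text{CD}} = e^{g}$ on $\R^m_{++}$, with $g(y) \triangleq \sum_{j=1}^m \alpha_j \ln y_j$, which is concave as a nonnegative combination of the concave maps $y \mapsto \ln y_j$. Then $\nabla u_{\text{CD}} = u_{\text{CD}}\nabla g$ and $\nabla^2 u_{\text{CD}} = u_{\text{CD}}\big(\nabla g\,\nabla g^{\mathsf{T}} + \nabla^2 g\big)$, where $\partial_j g(y) = \alpha_j/y_j$ and $\nabla^2 g(y)$ is diagonal with $j$-th entry $-\alpha_j/y_j^2$. Consequently, for any $v \in \R^m$, setting $w_j \triangleq v_j/y_j$,
\begin{align*}
v^{\mathsf{T}}\nabla^2 u_{\text{CD}}(y)\,v
&= u_{\text{CD}}(y)\Big[\big(\textstyle\sum_{j=1}^m \alpha_j w_j\big)^2 - \textstyle\sum_{j=1}^m \alpha_j w_j^2\Big] \\
&\le (\alpha-1)\,u_{\text{CD}}(y)\textstyle\sum_{j=1}^m \alpha_j w_j^2 ,
\end{align*}
where the inequality is Cauchy--Schwarz for the vectors with components $\sqrt{\alpha_j}$ and $\sqrt{\alpha_j}\,w_j$. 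For (i): when $\alpha \le 1$ this quadratic form is nonpositive for every $v$ (recall $u_{\text{CD}}(y) > 0$ on $\R^m_{++}$), hence $u_{\text{CD}}$ is concave on $\R^m_{++}$; concavity then extends to all of $\R^m_+$ by continuity (Proposition \ref{prop:list-cd}(ii)), approximating boundary points along segments emanating from an interior point. For (ii): if moreover $\alpha < 1$ and all $\alpha_j > 0$, then $\sum_j \alpha_j w_j^2 = 0$ forces $v = 0$, so $\nabla^2 u_{\text{CD}}(y)$ is negative definite at every $y \in \R^m_{++}$, which yields strict concavity on the interior.

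For (iii), I would invoke that $u_{\text{CD}} = \exp \circ g$ on $\R^m_{++}$ with $g$ differentiable and concave and $\exp$ differentiable and strictly increasing. Given $x, y \in \R^m_{++}$ with $\nabla u_{\text{CD}}(x)^{\mathsf{T}}(y-x) \le 0$, from $\nabla u_{\text{CD}}(x) = u_{\text{CD}}(x)\nabla g(x)$ and $u_{\text{CD}}(x) > 0$ we get $\nabla g(x)^{\mathsf{T}}(y-x) \le 0$; concavity of $g$ gives $g(y) \le g(x) + \nabla g(x)^{\mathsf{T}}(y-x) \le g(x)$, and monotonicity of $\exp$ gives $u_{\text{CD}}(y) \le u_{\text{CD}}(x)$. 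This is precisely pseudoconcavity of $u_{\text{CD}}$ on $\R^m_{++}$. Note that this argument does not actually use $\alpha > 1$; that hypothesis only singles out the regime not already covered by the (stronger) conclusion of (i).

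The step I expect to require the most care — and would therefore spell out rather than leave implicit — is the boundary extension in (i): on $\text{bd}\,\R^m_+$ the function $u_{\text{CD}}$ is non-differentiable and the representation $u_{\text{CD}} = e^{g}$ breaks down, so concavity there has to be recovered by a limiting argument from $\R^m_{++}$. Everything else — concavity and smoothness of $g$ on $\R^m_{++}$, the Cauchy--Schwarz estimate and its equality case, and the implication ``negative definite Hessian throughout an open convex set $\Rightarrow$ strict concavity there'' — is routine.
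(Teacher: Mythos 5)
Your proof is correct, but for parts (i) and (ii) it follows a genuinely different route from the paper. The paper factors $u_{\text{CD}}=w\circ g$ with $g(y)=\prod_j y_j^{\alpha_j/\alpha}$ and $w(t)=t^\alpha$, observes that $g$ is log-concave (hence quasiconcave) and positively homogeneous of degree one, invokes the classical fact that such functions are concave, and then composes with the increasing concave power $w$; strict concavity in (ii) is extracted by tracking the equality cases in that composition via the strict concavity of the logarithm. You instead work directly with the Hessian of $e^{\sum_j\alpha_j\ln y_j}$ on $\R^m_{++}$ and bound the quadratic form by Cauchy--Schwarz, obtaining $(\alpha-1)u_{\text{CD}}(y)\sum_j\alpha_j w_j^2$ as an upper estimate; this gives negative semidefiniteness for $\alpha\le 1$ and, via the equality case, negative definiteness for $\alpha<1$ with all $\alpha_j>0$. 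Your computation is a clean, self-contained calculus argument that avoids citing the quasiconcavity-plus-homogeneity theorem, at the price of requiring twice differentiability (which holds here) and an explicit limiting argument at the boundary --- a step the paper also needs and dispatches with the same continuity-plus-convex-domain remark you make. For part (iii) the two arguments essentially coincide: both reduce pseudoconcavity of $u_{\text{CD}}$ to concavity of its logarithm using $\nabla u_{\text{CD}}=u_{\text{CD}}\nabla\log u_{\text{CD}}$ with $u_{\text{CD}}>0$ on the interior, and your observation that the hypothesis $\alpha>1$ is not actually used there is consistent with the paper's proof, which likewise only needs log-concavity.
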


Now, we scalarize the multi-objective optimization problem \eqref{eq:mobj} via the Cobb-Douglas utility function. As in \eqref{eq:sobjutil1}, we consider the following single-objective optimization problem:  
\begin{equation}\label{eq:sobjutil-cd}
\begin{array}{cl}
\underset{x}{\mbox{maximize}} &  h_\text{CD}(x) \triangleq u_\text{CD}(a_1- f_1(x), \ldots, a_m- f_m(x))\\
\mbox{s.t.} & x \in K.
\end{array}
\end{equation} 
Explicitly, we thus have
    \begin{equation*}
    h_\text{CD}(x) = \left\{\begin{array}{cl} \displaystyle
    \prod_{j=1}^m (a_j - f_j(x))^{\alpha_j} & \text{if} \; f(x) \leq a\\
    -\infty & \text{otherwise}.
    \end{array}\right.
    \end{equation*}

By applying Propositions \ref{prop:barrier} and \ref{prop:exist} together with Proposition \ref{prop:list-cd} to \eqref{eq:sobjutil-cd}, we obtain the following results regarding its solutions.

\begin{proposition}\label{prop:sol-cd}
    The following statements hold:
   \begin{itemize}
       \item[(i)] the optimization problem \eqref{eq:sobjutil-cd} is solvable;
       \item[(ii)] the solutions of \eqref{eq:sobjutil-cd} are feasible for \eqref{eq:ineq};
        \item[(iii)] if $\alpha_j > 0$ for all $j \in \{1,\ldots,m\}$, then the solutions of \eqref{eq:sobjutil-cd} are Slater points for \eqref{eq:ineq}.
   \end{itemize}
\end{proposition}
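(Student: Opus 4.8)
The plan is to obtain all three items by specializing the general statements of Section~\ref{sec:utilframework} to the concrete utility function $u_\text{CD}$, using Proposition~\ref{prop:list-cd} to check the hypotheses; no genuinely new argument is required. The point of Proposition~\ref{prop:sol-cd} is precisely to record that Cobb-Douglas utility fits the abstract framework. Recall that \eqref{eq:sobjutil-cd} is nothing but \eqref{eq:sobjutil1} with $u = u_\text{CD}$, so that $h_\text{CD}=h$.

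For (i), I would invoke Proposition~\ref{prop:exist}. Inspecting its proof, the only features of $u$ it uses are $\text{dom }u=\R^m_+$ and continuity of $u$ on its domain, together with the standing assumption that the set in \eqref{eq:ineq} is nonempty and bounded (which is in force throughout). These are supplied by Proposition~\ref{prop:list-cd}(i) and (ii), so the solution set of \eqref{eq:sobjutil-cd} is nonempty. For (ii), Proposition~\ref{prop:feas} applies verbatim: its proof needs only $\text{dom }u=\R^m_+$, again Proposition~\ref{prop:list-cd}(i); hence any solution $x^*$ of \eqref{eq:sobjutil-cd} satisfies $a-f(x^*)\in\R^m_+$, i.e.\ $f(x^*)\le a$.

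For (iii), the tool is Proposition~\ref{prop:barrier}, whose hypotheses are that \eqref{eq:ineq} satisfies the Slater condition and that $u$ is a barrier. The barrier property is exactly Proposition~\ref{prop:list-cd}(iv) under the assumption $\alpha_j>0$ for all $j$ (with $\overline u = 0$); under the Slater condition, Proposition~\ref{prop:barrier} then yields that the solutions of \eqref{eq:sobjutil-cd} are Slater points for \eqref{eq:ineq}. One should be explicit here that the Slater condition for \eqref{eq:ineq} is the operative assumption: absent it, part (ii) still gives feasibility, but a solution cannot be a Slater point, so the claim in (iii) is to be read together with that standing constraint qualification.

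The main difficulty is not mathematical but a matter of matching hypotheses carefully: one must verify that the proofs of Propositions~\ref{prop:exist}, \ref{prop:feas}, and \ref{prop:barrier} use only those structural features of $u$ that Proposition~\ref{prop:list-cd} certifies for $u_\text{CD}$ -- the shape of the domain, continuity on it, and (for (iii)) the barrier property with $\overline u = 0$ -- and to state the role of the Slater condition in part (iii) unambiguously.
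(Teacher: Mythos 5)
Your proposal is correct and is exactly the paper's argument: the paper gives no separate proof and simply derives the three items by applying Propositions \ref{prop:exist}, \ref{prop:feas}, and \ref{prop:barrier} to \eqref{eq:sobjutil-cd}, with the hypotheses on $u_\text{CD}$ supplied by Proposition \ref{prop:list-cd}. Your observation that the Slater condition must be read as a standing hypothesis in part (iii) is a fair and accurate gloss on the statement.
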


Additionally, the unique solvability of \eqref{eq:sobjutil1} can be derived as follows.

\begin{proposition}
 If $\alpha_j > 0$ for all $j \in \{1,\ldots,m\}$, then for the solutions $\widehat x\in K$ and $\widetilde x\in K$ of \eqref{eq:sobjutil-cd} it holds $f(\widehat x)=f(\widetilde x)$.
 \end{proposition}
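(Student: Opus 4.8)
The plan is to reduce the statement to Proposition \ref{prop: h unique1}(ii) after a harmless rescaling of the exponents. First I would record what comes for free: since $\alpha_j>0$ for every $j$, Proposition \ref{prop:sol-cd}(iii) guarantees that each solution of \eqref{eq:sobjutil-cd} — in particular $\widehat x$ and $\widetilde x$ — is a Slater point for \eqref{eq:ineq}, so that $a-f(\widehat x),\,a-f(\widetilde x)\in\R^m_{++}$. Moreover, $u_{\text{CD}}$ is monotone on its domain by Proposition \ref{prop: cobb mono1}(i). Hence Proposition \ref{prop: h unique1}(ii) would close the argument at once, \emph{if} $u_{\text{CD}}$ were strictly concave on the interior of its domain.

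This is exactly where the main obstacle lies: Proposition \ref{prop: cobb conc1}(ii) certifies this strict concavity only when the elasticities sum to $\alpha<1$, while here we merely assume $\alpha_j>0$, so $\alpha$ can be arbitrary. To repair this, I would fix a scalar $t\in(0,1/\alpha)$ — possible because $\alpha=\sum_{j}\alpha_j>0$ — and pass to the Cobb-Douglas utility $u_{\text{CD}}^{(t)}$ with elasticities $t\alpha_1,\ldots,t\alpha_m$. These are positive and sum to $t\alpha<1$, so $u_{\text{CD}}^{(t)}$ is still monotone on its domain (Proposition \ref{prop: cobb mono1}(i)) and is now strictly concave on $\R^m_{++}$ (Proposition \ref{prop: cobb conc1}(ii)). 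Write $h_{\text{CD}}^{(t)}(x)\triangleq u_{\text{CD}}^{(t)}(a-f(x))$ for the associated scalarized objective.

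The key observation is that \eqref{eq:sobjutil-cd} and $\max_{x\in K}h_{\text{CD}}^{(t)}(x)$ have the same maximizers. Indeed, on $\{x\in K:f(x)\le a\}$ one has $h_{\text{CD}}^{(t)}(x)=\bigl(h_{\text{CD}}(x)\bigr)^{t}$ with $h_{\text{CD}}(x)\ge 0$, whereas off that set both functions take the value $-\infty$; since $z\mapsto z^{t}$ is a strictly increasing bijection of $[0,\infty)$ onto itself, the two objectives induce the same ordering on $K$, hence the same argmax. (I would phrase $h_{\text{CD}}^{(t)}$ directly through $u_{\text{CD}}^{(t)}$ rather than as a literal $t$-th power of $h_{\text{CD}}$, to avoid having to interpret $(-\infty)^{t}$; this is the only point requiring a little care.) Consequently $\widehat x$ and $\widetilde x$ also solve the rescaled problem and are Slater points for \eqref{eq:ineq}, so applying Proposition \ref{prop: h unique1}(ii) to $u_{\text{CD}}^{(t)}$ gives $f(\widehat x)=f(\widetilde x)$.

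As an alternative to the rescaling, I would note that one can run the midpoint argument of Proposition \ref{prop: h unique1}(ii) directly on $x\mapsto\log h_{\text{CD}}(x)=\sum_j\alpha_j\log(a_j-f_j(x))$, which is finite and well defined at the Slater points $\widehat x,\widetilde x$, exploiting that $y\mapsto\sum_j\alpha_j\log y_j$ is strictly concave on $\R^m_{++}$ for any $\alpha_j>0$; this yields the strict inequality at the midpoint that contradicts optimality whenever $f(\widehat x)\ne f(\widetilde x)$. Either route avoids needing $\alpha<1$.
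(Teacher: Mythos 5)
Your proposal is correct and follows essentially the same route as the paper: the paper likewise handles the case $\alpha \ge 1$ by rescaling the exponents to $\alpha_j/\beta$ with $\beta>\alpha$ (your $t=1/\beta$), observes that the strictly increasing power map preserves the solution set, and then invokes Proposition \ref{prop: h unique1} with Propositions \ref{prop: cobb mono1}, \ref{prop: cobb conc1}, and \ref{prop:sol-cd}. Your extra care about not literally writing $(-\infty)^t$, and the logarithmic alternative, are minor refinements of the same idea.
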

 
\begin{proof}
   If $\alpha < 1$, we are done by applying Proposition \ref{prop: h unique1} together with Propositions \ref{prop: cobb mono1}, \ref{prop: cobb conc1}, and \ref{prop:sol-cd}.    
   In case $\alpha \geq  1$, let us consider for an arbitrary $\beta > \alpha$ the Cobb-Douglas function 
   \[
      v(y) \triangleq  \left\{\begin{array}{cl} \displaystyle \prod_{j=1}^m y_j^{\gamma_j} & \text{if} \; y \ge 0\\  -\infty & \text{otherwise}, \end{array}\right.	 
   \]   
   where $\gamma_j \triangleq \frac{\alpha_j}{\beta}$, $j=1, \ldots,m$. Since $v(y)=(u_\text{CD}(y))^\frac{1}{\beta}$ and the function $z^{\frac{1}{\beta}}$ with $\beta >1$ is monotonically increasing, $\widehat x$ and $\widetilde x$ remain solutions of \eqref{eq:sobjutil-cd} with $v$ instead of $u_\text{CD}$. In view of $\gamma \triangleq \sum_{j=1}^m\gamma_j < 1$, we may argue as above to obtain $f(\widehat x)= f(\widetilde x)$.
\end{proof}

%From Proposition \ref{prop:exist} we know that . Due to Proposition \ref{prop:feas}, , i.e. $f(x^*) \leq a$. In view of Proposition \ref{prop:barrier}, it can be shown that these inequalities have to strictly hold here.
%
%\begin{proposition}\label{prop: cobb slater}
%    Let $\alpha_j > 0$ for all $j \in \{1,\ldots,m\}$.    If $x^* \in K$ solves \eqref{eq:sobjutil-cd}, then it is a Slater point for \eqref{eq:ineq}, i.e. $f(x^*) < a$.
%\end{proposition}

By applying Theorem \ref{th:strict} together with Propositions \ref{prop: cobb mono1} and \ref{prop:sol-cd}, we obtain (weak) Pareto optima of the original multi-objective problem \eqref{eq:mobj} through the single-objective problem \eqref{eq:sobjutil-cd}.

\begin{proposition}
Let $x^*\in K$ solve \eqref{eq:sobjutil-cd}. The following statements hold: 
        \begin{itemize}
            \item[(i)] if $\exists \overline{\jmath}\in\{1,\ldots,m\}:\alpha_{\overline{\jmath}} > 0$, then $x^* \in W$;
            \item[(ii)] if $\alpha_j > 0$ for all $j\in\{1,\ldots,m\}$, then $x^* \in P$.
        \end{itemize}
\end{proposition}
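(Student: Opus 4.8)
The plan is to derive this proposition as an immediate corollary of Theorem~\ref{th:strict}, using the monotonicity properties of the Cobb-Douglas utility function established in Proposition~\ref{prop: cobb mono1} and the fact (from Proposition~\ref{prop:sol-cd}) that solutions of \eqref{eq:sobjutil-cd} are feasible, and under the stronger hypothesis Slater points, for \eqref{eq:ineq}. The key observation is that \eqref{eq:sobjutil-cd} is exactly the instance of \eqref{eq:sobjutil1} obtained by taking $u = u_\text{CD}$, so every structural result about \eqref{eq:sobjutil1} transfers verbatim.

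For part~(i), I would argue as follows. Suppose $\alpha_{\overline{\jmath}} > 0$ for some $\overline{\jmath}$. Then, by Proposition~\ref{prop: cobb mono1}(ii), $u_\text{CD}$ is weakly strictly monotone on its domain $\R^m_+$. Since $x^*$ solves \eqref{eq:sobjutil-cd}, which is \eqref{eq:sobjutil1} with $u = u_\text{CD}$, Theorem~\ref{th:strict}(i) directly yields $x^* \in W$. No further work is needed; the continuity assumption required to even state \eqref{eq:sobjutil1} is guaranteed by Proposition~\ref{prop:list-cd}(ii), and the domain condition $\text{dom }u_\text{CD} = \R^m_+$ by Proposition~\ref{prop:list-cd}(i).

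For part~(ii), assume $\alpha_j > 0$ for all $j \in \{1,\ldots,m\}$. By Proposition~\ref{prop:list-cd}(iv), $u_\text{CD}$ is then a barrier, and by Proposition~\ref{prop:sol-cd}(iii) every solution $x^*$ of \eqref{eq:sobjutil-cd} is a Slater point for \eqref{eq:ineq}. Moreover, by Proposition~\ref{prop: cobb mono1}(iii), $u_\text{CD}$ is strictly monotone on the interior of its domain $\R^m_{++}$. These are exactly the hypotheses of Theorem~\ref{th:strict}(iii) applied with $u = u_\text{CD}$, which gives $x^* \in P$. Alternatively, one could note that $x^*$ being a Slater point means $a - f(x^*) \in \R^m_{++} = \text{int dom }u_\text{CD}$, so only the interior behaviour of $u_\text{CD}$ is relevant, which is why strict monotonicity on the interior suffices.

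The proof is essentially a bookkeeping exercise of matching hypotheses to the already-proven general theorem, so there is no real obstacle. The one point requiring a moment's care is part~(ii): one must invoke the barrier property (Proposition~\ref{prop:list-cd}(iv), via Proposition~\ref{prop:sol-cd}(iii)) to upgrade feasibility to the Slater property before Theorem~\ref{th:strict}(iii) becomes applicable, since $u_\text{CD}$ is only strictly monotone on the \emph{interior} of its domain, not on all of $\R^m_+$ (indeed it vanishes on the whole boundary). Without the Slater conclusion one could not rule out a solution lying on the boundary, where strict monotonicity fails and Pareto optimality could break down.
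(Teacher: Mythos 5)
Your proposal is correct and follows exactly the route the paper itself takes: the paper derives this proposition by combining Theorem \ref{th:strict} with Propositions \ref{prop: cobb mono1} and \ref{prop:sol-cd}, which is precisely your matching of hypotheses (weakly strict monotonicity for part (i), and the Slater property of solutions plus strict monotonicity on the interior for part (ii)). Your closing remark about why the barrier/Slater step is indispensable in part (ii) is a correct and useful observation, but no new argument beyond the paper's is introduced.
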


It turns out that (weak) Pareto optima of \eqref{eq:mobj}, which happen to be feasible or Slater points for \eqref{eq:ineq}, can be found by solving \eqref{eq:sobjutil-cd} if the parameters $\alpha_j$'s are properly adjusted.

\begin{theorem}
\label{thm:w-cd}
  The following statements hold:
  \begin{itemize}
      \item[(i)] if $\widehat{x} \in W$ is feasible for \eqref{eq:ineq} satisfying the Slater condition, then there exist ${\alpha}_j \geq 0$, $j=1,\ldots,m$ with $\exists\overline{\jmath}\in \{1,\ldots,m\}: \alpha_{\overline{\jmath}}>0$, such that $\widehat{x}$ belongs to the solution set of \eqref{eq:sobjutil-cd};
      \item[(ii)] if $\widehat{x} \in P$ is a Slater point for \eqref{eq:ineq}, then there exist ${\alpha}_j > 0$, $j=1,\ldots,m$, such that $\widehat{x}$ belongs to the solution set of \eqref{eq:sobjutil-cd}.
  \end{itemize}
 \end{theorem}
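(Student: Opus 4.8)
The plan is to realize \eqref{eq:sobjutil-cd} as a reweighting of the classical weighted-sum scalarization of \eqref{eq:mobj}. Since $K$ is convex and every $f_j$ is convex, a point $\widehat{x}\in K$ lies in $W$ if and only if it minimizes $\sum_{j=1}^m\lambda_j f_j$ over $K$ for some $\lambda\in\R^m_{+}\setminus\{0\}$, and it minimizes such a weighted sum for some $\lambda\in\R^m_{++}$ exactly when it is properly Pareto optimal (Geoffrion); see e.g. \cite{ehrgott2005multicriteria}. The bridge between the weights $\lambda$ and the Cobb--Douglas parameters is the reparametrization
\[
  \alpha_j \triangleq \lambda_j\,\bigl(a_j-f_j(\widehat{x})\bigr),\qquad j=1,\ldots,m,
\]
so that on the constraints of \eqref{eq:ineq} inactive at $\widehat{x}$ (i.e. $a_j-f_j(\widehat{x})>0$) the numbers $\alpha_j$ and $\lambda_j$ share the same sign pattern, while on the active ones $\alpha_j=0$.

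For part (i): I would first apply the weighted-sum characterization to obtain $\lambda\in\R^m_{+}\setminus\{0\}$ with $\sum_j\lambda_j f_j(x)\ge\sum_j\lambda_j f_j(\widehat{x})$ for all $x\in K$, and define $\alpha$ as above; feasibility of $\widehat{x}$ for \eqref{eq:ineq} gives $a_j-f_j(\widehat{x})\ge 0$, hence $\alpha\ge 0$. Next I would check $\alpha\neq 0$: if every $j$ with $\lambda_j>0$ satisfied $f_j(\widehat{x})=a_j$, then a Slater point $\overline{x}$ — which exists by hypothesis — would give $\sum_j\lambda_j f_j(\overline{x})<\sum_j\lambda_j f_j(\widehat{x})$, contradicting weighted-sum optimality of $\widehat{x}$; so $\alpha_{\overline{\jmath}}>0$ for some $\overline{\jmath}$. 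It then remains to verify that $\widehat{x}$ solves \eqref{eq:sobjutil-cd}. Any $x\in K$ with $f(x)\not\le a$ has $h_{\text{CD}}(x)=-\infty$, and any $x$ with $a_{j_0}-f_{j_0}(x)=0$ for some $j_0$ with $\alpha_{j_0}>0$ has $h_{\text{CD}}(x)=0<h_{\text{CD}}(\widehat{x})$, since $h_{\text{CD}}(\widehat{x})=\prod_{j:\alpha_j>0}(a_j-f_j(\widehat{x}))^{\alpha_j}>0$. For the remaining $x$ — feasible for \eqref{eq:ineq} with $a_j-f_j(x)>0$ whenever $\alpha_j>0$ — the elementary bound $\ln z\le z-1$ with $z=(a_j-f_j(x))/(a_j-f_j(\widehat{x}))$, multiplied by $\alpha_j$ and simplified via $\alpha_j=\lambda_j(a_j-f_j(\widehat{x}))$, yields
\[
  \alpha_j\,\ln\frac{a_j-f_j(x)}{a_j-f_j(\widehat{x})}\ \le\ \lambda_j\bigl(f_j(\widehat{x})-f_j(x)\bigr).
\]
Summing over $\{j:\alpha_j>0\}$, the left side equals $\ln\bigl(h_{\text{CD}}(x)/h_{\text{CD}}(\widehat{x})\bigr)$, while the right side is at most $\sum_{j=1}^m\lambda_j(f_j(\widehat{x})-f_j(x))\le 0$: the extra indices have $\lambda_j>0$ and $\alpha_j=0$, hence $f_j(\widehat{x})=a_j\ge f_j(x)$ by feasibility of $x$, so adding them only enlarges the sum, and the last inequality is weighted-sum optimality. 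Thus $h_{\text{CD}}(x)\le h_{\text{CD}}(\widehat{x})$.

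For part (ii): the same scheme carries over once a \emph{strictly positive} weight vector $\lambda\in\R^m_{++}$ is at hand. Indeed, since $\widehat{x}$ is a Slater point, $a_j-f_j(\widehat{x})>0$ for every $j$, so $\alpha=(\alpha_1,\ldots,\alpha_m)\in\R^m_{++}$; then there are no inactive indices to track, the only feasible points with a vanishing factor give $h_{\text{CD}}(x)=0<h_{\text{CD}}(\widehat{x})$, and for feasible $x$ with $f(x)<a$ the $\ln z\le z-1$ estimate summed over all $j$ gives $\ln\bigl(h_{\text{CD}}(x)/h_{\text{CD}}(\widehat{x})\bigr)\le\sum_{j=1}^m\lambda_j(f_j(\widehat{x})-f_j(x))\le 0$ directly.

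The step I expect to be the main obstacle is, in part (ii), producing this strictly positive multiplier: a point of $P$ need not minimize any strictly-positively-weighted sum of the $f_j$ — this is precisely Geoffrion's proper Pareto optimality — and being a Slater point for \eqref{eq:ineq} (a condition on the reference level $a$, not on $f$ and $K$) does not by itself deliver it. So the heart of the argument for (ii) is to secure $\lambda\in\R^m_{++}$ — by invoking proper Pareto optimality of $\widehat{x}$, or by extracting it from the structure available (e.g. boundedness of the set in \eqref{eq:ineq} together with any regularity of $f$ and $K$ one is willing to assume). Once $\lambda\in\R^m_{++}$ is in place, the computation above closes the proof with every $\alpha_j>0$, and part (i) is the routine half. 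I would also double-check the boundary behaviour of $h_{\text{CD}}$ carefully (the $-\infty$ and zero-factor cases) since this is what lets the inequality over $\{j:\alpha_j>0\}$ suffice to conclude optimality over all of $K$.
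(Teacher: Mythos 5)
Your part (i) is correct, and it takes a genuinely different route from the paper's. The paper first passes to the transformed objectives $\varphi_j=-\log(a_j-f_j)$ (set to $+\infty$ where $f_j\geq a_j$), verifies that $\widehat x$ stays weak Pareto optimal for the transformed problem \eqref{eq:mobj phi1} and that each $\varphi_j$ is convex, and only then invokes the weighted-sum scalarization theorem for convex problems \emph{on that transformed problem}, so that the resulting weights are directly the Cobb--Douglas exponents in \eqref{eq: log cobb1}; optimality for \eqref{eq:sobjutil-cd} is then read off by taking logarithms. You instead apply the weighted-sum theorem to the original problem \eqref{eq:mobj}, obtain $\lambda\in\R^m_+\setminus\{0\}$, and transport optimality to $h_{\text{CD}}$ via the reparametrization $\alpha_j=\lambda_j(a_j-f_j(\widehat x))$ together with $\ln z\le z-1$; this is exactly the supporting-hyperplane relation between the two weight systems (the gradient of $-\alpha_j\log(a_j-f_j)$ at $\widehat x$ equals $\lambda_j\nabla f_j(\widehat x)$). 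Your handling of the boundary cases ($h_{\text{CD}}=-\infty$, vanishing factors, the indices with $\lambda_j>0$ but $f_j(\widehat x)=a_j$) and your Slater-point argument for $\alpha\neq 0$ are all sound. Your version is more elementary (no need to establish convexity of the $\varphi_j$'s or Pareto optimality for the auxiliary problem); the paper's version provides a template it reuses verbatim for the CES case.

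For part (ii) you have correctly located the obstruction but not removed it, so your proof of (ii) is incomplete as written. Under convexity, $\widehat x\in P$ only guarantees weights $\lambda\in\R^m_+\setminus\{0\}$; strictly positive weights characterize \emph{properly} (Geoffrion) Pareto optimal points, which is not what is assumed, and being a Slater point is a condition on $a$ that contributes nothing toward positivity of the multipliers. Neither of your proposed escapes is therefore available. You should be aware, however, that the paper's own treatment of (ii) is just ``analogous to (i)'': it applies the cited scalarization theorem to the log-transformed problem and reads off strictly positive weights from mere Pareto optimality of $\widehat x$ for that problem --- which is precisely the same delicate step. Indeed the conclusion of (ii) can fail without proper efficiency: take $m=2$, $K=\R$, $f_1(x)=x$, $f_2(x)=x^2$, $a=(1,1)$; then $\widehat x=0$ is a Pareto optimal Slater point, yet for any $\alpha\in\R^2_{++}$ the unique maximizer of $(1-x)^{\alpha_1}(1-x^2)^{\alpha_2}$ over $[-1,1]$ is $-\alpha_1/(\alpha_1+2\alpha_2)\neq 0$. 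So your hesitation is well placed: closing (ii) requires strengthening the hypothesis to proper Pareto optimality (or an equivalent), both in your argument and in the paper's.
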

\begin{proof} (i) Preliminarily, consider for $j=1,\ldots,m$:
    \[
    \varphi_j(x)\triangleq \left\{\begin{array}{cl}
-\log(a_j-f_j(x)) & \text{if} \; f_j(x) < a_j\\ 
+\infty & \text{otherwise}.
\end{array}\right.	 
    \] 
    We show that $\widehat x$ is weak Pareto optimal for 
    \begin{equation}\label{eq:mobj phi1}
    \begin{array}{cl}
    \underset{x}{\mbox{minimize}} & \varphi(x) \triangleq \big(\varphi_1(x), \ldots, \varphi_m(x)\big)^\mathsf{T}\\ 
    \mbox{s.t.} & x \in K.
    \end{array}
    \end{equation}
    Otherwise, there would exit $x \in K$, such that for every $j=1,\ldots,m$ it holds 
    \begin{equation}\label{eq:contra-cd}
       \varphi_{j}(\widehat x) > \varphi_{j}(x).    
    \end{equation}
     In particular, it follows $f_j(x) < a_j$.
    In case $f_j(\widehat x) = a_j$, we immediately deduce $f_j(\widehat x) > f_j(x)$.
    If $f_j(\widehat x) < a_j$, \eqref{eq:contra-cd} reads as
\[
- \log(a_{j}-f_{j}(\widehat x)) > -\log(a_{j}-f_{j}(x)).
\]
Hence, again $f_j(\widehat x) > f_j(x)$ follows. Altogether, $\widehat x$ is not weak Pareto optimal for \eqref{eq:mobj}, a contradiction. Further, in view of increasing monotonicity and concavity of the logarithm, reasoning similarly to the proof of Proposition \ref{prop: h concave1}, $\varphi_j$ turns out to be convex for every $j=1,\ldots,m$. By applying \cite[Theorem 4.1]{ehrgott2005multicriteria} to \eqref{eq:mobj phi1}, there exist $\alpha_j \geq 0$, $j=1,\ldots,m$ with $\exists \overline{\jmath}\in\{1,\ldots,m\}:\alpha_{\overline{\jmath}} > 0$, such that $\widehat{x}$ belongs to the solution set of the following problem:
\begin{equation} \label{eq: log cobb1}
    \begin{array}{cl}
    \underset{x}{\mbox{minimize}} & \displaystyle\sum_{j=1}^m \alpha_j \varphi_j(x)\\
    \mbox{s.t.} & x \in K.
    \end{array}
    \end{equation}
Evaluated at a Slater point $\overline{x} \in K$, the objective function in \eqref{eq: log cobb1} is obviously finite. Hence, for every $j \in \{1,\ldots,m\}$ it holds, cf. Proposition \ref{prop:slater-rest},
  \begin{equation}\label{eq:cond-cd}
   \alpha_j \neq 0 \Rightarrow f_j(\widehat x) < a_j.
\end{equation}
Now, we show that $\widehat x$ belongs also to the solution set of \eqref{eq:sobjutil-cd}. In fact, if this were not the case, there would exist $\widetilde x \in K$ such that
\[
  h_\text{CD} (\widetilde x) > h_\text{CD} (\widehat x). 
\]
In particular, $\widetilde x$ must be feasible for \eqref{eq:ineq}. Since $\widehat x$ is feasible for \eqref{eq:ineq} by assumption, the latter inequality becomes
    \begin{equation}\label{eq:in-cd}
    \prod_{j=1}^m (a_j - f_j(\widetilde{x}))^{\alpha_j} > \prod_{j=1}^m (a_j - f_j(\widehat x))^{\alpha_j}.
    \end{equation}
     Because of \eqref{eq:cond-cd} and not all $\alpha_j$'s vanishing, we additionally have
    \[
       \prod_{j=1}^m (a_j - f_j(\widehat x))^{\alpha_j} >0.
    \]
    Hence, we deduce
      \begin{equation}\label{eq:cond-cd1}
   \alpha_j \neq 0 \Rightarrow f_j(\widetilde x) < a_j.
\end{equation}
    Altogether, by applying the logarithm to \eqref{eq:in-cd} and in view of \eqref{eq:cond-cd} and \eqref{eq:cond-cd1}, we obtain
    \[
    -\sum_{j=1}^{m}\alpha_j \log(a_j-f_j(\widetilde{x})) < -\sum_{j=1}^{m}\alpha_j \log(a_j-f_j(\widehat x)),
    \]
    This provides a contradiction to the fact that $\widehat x$ solves \eqref{eq: log cobb1}.
    
    (ii) The proof is analogous to (i).
%If $x^*$ is a solution of \eqref{eq:sobjutil}, due to the Slater condition, $x^* \in K \cap \text{int dom }h$, i.e. $f(x^*)<a$. Moreover, $\prod_{j=1}^m (a_j - f_j(x^*))^{\alpha_j} \geq \prod_{j=1}^m (a_j - f_j(x))^{\alpha_j}$ for all $x \in K \cap \text{int dom }h$. Thus, $\sum_{j=1}^{m}\alpha_j \log(a_j-f_j(x^*))\geq \sum_{j=1}^{m}\alpha_j \log(a_j-f_j(x))$ for all $x \in K \cap \text{int dom }h$, in view of the monotonicity of $\log$ on $\text{int dom }h$ and in turn, $x^*$ solves also \eqref{eq: log cobb}. 
\end{proof}

\subsection{Leontief utility function}

Next, we consider the Leontief utility function, see e.g. \cite{mascollel1995micro}:
%which leads to the robust approach for the multi-objective problem \eqref{eq:mobj}: 
\begin{equation}\label{eq:MIN}
u_\text{MIN}(y) \triangleq \left\{\begin{array}{cl}
\displaystyle \min_{j=1,\ldots,m} \alpha_j \, y_j & \text{if} \; y \ge 0\\
-\infty & \text{otherwise},
\end{array}
\right.
\end{equation}
where the parameters $\alpha_j \geq 0$, $j=1,\ldots,m$, play the role of inputs' weights.
Basic properties of the Leontief utility function are listed in Proposition \ref{prop:list-min}.

\begin{proposition} \label{prop:list-min}
 The following statements hold:
    \begin{itemize}
      \item[(i)] $\text{dom }u_{\text{MIN}}=\R^m_{+}$; 
      \item[(ii)] $u_{\text{MIN}}$ is continuous on its domain; 
      \item[(iii)] if $\alpha_j > 0$ for all $j \in \{1,\ldots,m\}$, then $u_\text{MIN}$ is a barrier with $\overline{u}=0$.
    \end{itemize}
\end{proposition}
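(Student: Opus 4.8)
The plan is to read off all three items directly from the definition \eqref{eq:MIN} of $u_\text{MIN}$; each verification is elementary. For (i), recall that $\text{dom }u_\text{MIN}=\{y\in\R^m: u_\text{MIN}(y)>-\infty\}$. If $y\ge 0$, then $u_\text{MIN}(y)=\min_{j=1,\ldots,m}\alpha_j y_j$ is the minimum of finitely many real numbers, hence finite; if $y\not\ge 0$, then $u_\text{MIN}(y)=-\infty$ by the very definition. Therefore $\text{dom }u_\text{MIN}=\R^m_+$, which also pins down $\text{int dom }u_\text{MIN}=\R^m_{++}$ and $\text{bd dom }u_\text{MIN}=\R^m_+\setminus\R^m_{++}$, needed for item (iii).

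For (ii), I would note that on $\R^m_+$ the map $y\mapsto\min_{j=1,\ldots,m}\alpha_j y_j$ is the pointwise minimum of the $m$ linear (hence continuous) functions $y\mapsto\alpha_j y_j$, and the minimum of finitely many continuous functions is continuous. Thus $u_\text{MIN}$ is continuous on its domain $\R^m_+$.

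For (iii), assume $\alpha_j>0$ for all $j$; I would check the two conditions of Definition \ref{def:barrier} with $\overline u=0$. If $y\in\R^m_+\setminus\R^m_{++}$, then $y_j\ge 0$ for every $j$ and $y_{j_0}=0$ for some index $j_0$; since $\alpha_j y_j\ge 0$ for all $j$ while $\alpha_{j_0}y_{j_0}=0$, the minimum equals $0$, i.e. $u_\text{MIN}(y)=0=\overline u$, giving condition (i) of Definition \ref{def:barrier}. If instead $y\in\R^m_{++}$, then $\alpha_j y_j>0$ for every $j$, so the minimum over the finite index set is strictly positive, i.e. $u_\text{MIN}(y)>0=\overline u$, which is condition (ii). I do not anticipate a real obstacle here; the only point requiring attention is to invoke the hypothesis $\alpha_j>0$ in the right place — it is exactly what simultaneously forces strict positivity in the interior and nonnegativity of all terms $\alpha_j y_j$ on the boundary, so that a single vanishing coordinate drives the minimum to zero. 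Were some $\alpha_{j_0}=0$ allowed, $u_\text{MIN}$ would already vanish on a portion of the interior and the barrier property would fail, which is why this assumption appears only in part (iii).
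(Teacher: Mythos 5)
Your verification is correct, and the paper itself states Proposition \ref{prop:list-min} without proof, treating all three items as immediate consequences of the definition \eqref{eq:MIN} — which is exactly what your elementary check confirms. The only cosmetic quibble is in your closing aside: if some $\alpha_{j_0}=0$ then $u_\text{MIN}$ vanishes identically on all of $\R^m_+$ (not merely on a portion of the interior), but this does not affect your argument, which correctly invokes $\alpha_j>0$ precisely where it is needed.
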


The monotonicity properties of the Leontief utility function are given in Proposition \ref{prop: min mono1}.

\begin{proposition} \label{prop: min mono1}
    The following statements hold:
    \begin{itemize}
        \item[(i)] $u_\text{MIN}$ is monotone on its domain;
        \item[(ii)] if $\alpha_j > 0$ for all $j \in \{1,\ldots, m\}$, then $u_\text{MIN}$ is weakly strictly monotone on its domain.
    \end{itemize}
\end{proposition}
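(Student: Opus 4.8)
\textbf{Proof proposal for Proposition \ref{prop: min mono1}.}

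The plan is to prove both items directly from the definition of $u_\text{MIN}$ in \eqref{eq:MIN}, exploiting the elementary monotonicity of the pointwise minimum and of the maps $y_j \mapsto \alpha_j y_j$. For item (i), I would fix $y \in \text{dom } u_\text{MIN} = \R^m_+$ and $\delta \in \R^m_+$ with $y + \delta \in \text{dom } u_\text{MIN}$; note that $y + \delta \ge 0$ is automatic here. Since $\alpha_j \ge 0$ for every $j$, we have $\alpha_j(y_j + \delta_j) = \alpha_j y_j + \alpha_j \delta_j \ge \alpha_j y_j$ for each $j$. Taking the minimum over $j$ on both sides preserves the inequality, because the minimum is monotone in each argument, and hence $u_\text{MIN}(y+\delta) = \min_j \alpha_j(y_j+\delta_j) \ge \min_j \alpha_j y_j = u_\text{MIN}(y)$, which is exactly monotonicity on the domain.

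For item (ii), assume now $\alpha_j > 0$ for all $j$, fix $y \in \R^m_+$ and $\delta \in \R^m_{++}$ with $y + \delta \in \R^m_+$ (again automatic). For each $j$ we now get the strict inequality $\alpha_j(y_j + \delta_j) = \alpha_j y_j + \alpha_j \delta_j > \alpha_j y_j$, since $\alpha_j \delta_j > 0$ strictly. Because all $m$ inequalities are strict and $m$ is finite, the minimum over $j$ is also strictly larger: more precisely, if $j^*$ attains $\min_j \alpha_j y_j$, then $u_\text{MIN}(y+\delta) = \min_j \alpha_j(y_j+\delta_j) \ge$ wait --- I should instead argue $u_\text{MIN}(y+\delta) \ge \min_j \alpha_j y_j + \min_j \alpha_j \delta_j > \min_j \alpha_j y_j = u_\text{MIN}(y)$, using that $\min_j(\alpha_j y_j + \alpha_j \delta_j) \ge \min_j \alpha_j y_j + \min_j \alpha_j \delta_j$ and that $\min_j \alpha_j \delta_j > 0$ because each $\alpha_j \delta_j > 0$ and there are finitely many of them. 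This yields weak strict monotonicity on the domain.

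I do not anticipate a genuine obstacle here; the only point requiring a little care is the step in item (ii) where strictness of the componentwise inequalities must be transferred to the minimum. This transfer fails for infinite index sets (an infimum of strictly positive numbers can be zero), so the argument must explicitly invoke finiteness of $\{1,\ldots,m\}$ — equivalently, that $\min_j \alpha_j \delta_j$ is a minimum of finitely many strictly positive reals and hence strictly positive. One should also note in passing why the hypotheses of item (ii) cannot be weakened to ``$\alpha_{\overline\jmath} > 0$ for some $\overline\jmath$'' as in the Cobb-Douglas case: if some $\alpha_k = 0$, then $\alpha_k(y_k + \delta_k) = 0 = \alpha_k y_k$, so increasing $y$ by $\delta \in \R^m_{++}$ may leave the minimum (pinned at the zero coordinate) unchanged, and weak strict monotonicity genuinely fails. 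This contrast is worth a one-line remark but is not part of the proof proper.
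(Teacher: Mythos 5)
Your proposal is correct and follows essentially the same route as the paper's proof: componentwise monotonicity of $y_j \mapsto \alpha_j y_j$ passed through the pointwise minimum, with strictness in item (ii) coming from $\alpha_j \delta_j > 0$ for every $j$. The only difference is that you justify the transfer of strictness to the minimum explicitly (via $\min_j(\alpha_j y_j + \alpha_j\delta_j) \ge \min_j \alpha_j y_j + \min_j \alpha_j\delta_j$ and finiteness of the index set), a detail the paper leaves implicit.
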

{\begin{proof}
    (i) For all $y\in \text{dom }u_\text{MIN}$ and $\delta \in \mathbb R^m_+$, $y+\delta \in \text{dom }u_\text{MIN}$, and we have
    \begin{equation*} \label{eq:leon mon}
    u_\text{MIN}(y+\delta) = \min_{j=1, \ldots, m} \alpha_j(y_j + \delta_j) \geq \min_{j=1, \ldots, m} \alpha_j y_j = u_\text{MIN}(y),
    \end{equation*}
    since $\alpha_j(y_j+\delta_j) \geq \alpha_j y_j$ for all $j \in \{1,\ldots,m\}$. 

    (ii) For all $y\in \text{dom }u_\text{MIN}$ and $\delta \in \mathbb R^m_{++}$, $y+\delta \in \text{dom }u_\text{MIN}$, and we have
    \begin{equation*} 
    u_\text{MIN}(y+\delta) = \min_{j=1, \ldots, m} \alpha_j(y_j + \delta_j) > \min_{j=1, \ldots, m} \alpha_j y_j = u_\text{MIN}(y),
    \end{equation*}
    since $\alpha_j(y_j+\delta_j) > \alpha_j y_j$ for all $j\in \{1,\ldots,m\}$.
\end{proof}}

The concavity of the Leontief utility function is obvious, since it is represented as the minimum of linear functions. 

\begin{proposition}
     $u_\text{MIN}$ is concave.
\end{proposition}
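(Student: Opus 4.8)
The plan is to show that $u_\text{MIN}$ is concave on all of $\R^m$ by verifying the defining inequality directly, splitting into the cases where the two points lie in the domain $\R^m_+$ and where at least one does not. Recall $u_\text{MIN}(y) = \min_{j=1,\ldots,m} \alpha_j y_j$ for $y \ge 0$ and $-\infty$ otherwise, with $\alpha_j \ge 0$. The only subtlety is the $-\infty$ convention, but it is handled exactly as in the concavity arguments already used in the excerpt (e.g. in Proposition \ref{prop: h concave1}): if either $\widehat y \notin \R^m_+$ or $\widetilde y \notin \R^m_+$, the right-hand side of the concavity inequality is $-\infty$, so there is nothing to prove.

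So the substantive case is $\widehat y, \widetilde y \in \R^m_+$ and $\lambda \in (0,1)$. First I would note $\lambda \widehat y + (1-\lambda)\widetilde y \in \R^m_+$ by convexity of the orthant, so $u_\text{MIN}$ is finite there. Then, for each index $j$, linearity gives $\alpha_j\big(\lambda \widehat y_j + (1-\lambda)\widetilde y_j\big) = \lambda \alpha_j \widehat y_j + (1-\lambda)\alpha_j \widetilde y_j \ge \lambda\, u_\text{MIN}(\widehat y) + (1-\lambda)\, u_\text{MIN}(\widetilde y)$, where the inequality uses $\alpha_j \widehat y_j \ge u_\text{MIN}(\widehat y)$ and $\alpha_j \widetilde y_j \ge u_\text{MIN}(\widetilde y)$ by definition of the minimum. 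Taking the minimum over $j$ on the left preserves this lower bound, yielding $u_\text{MIN}(\lambda \widehat y + (1-\lambda)\widetilde y) \ge \lambda\, u_\text{MIN}(\widehat y) + (1-\lambda)\, u_\text{MIN}(\widetilde y)$, which is exactly concavity.

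Alternatively — and this is really the one-line argument the text alludes to — one can observe that each map $y \mapsto \alpha_j y_j$ is linear, hence concave, and that the pointwise minimum of any family of concave functions is concave; restricting attention to the convex domain $\R^m_+$ (with the value $-\infty$ off it, which only reinforces concavity since $-\infty$ is below every finite value) does not disturb this. I would present the explicit inequality version for self-containedness, mirroring the style of the nearby proofs. There is no real obstacle here; the only thing to be careful about is stating the $-\infty$ case explicitly so that concavity is asserted as a property of $u_\text{MIN}: \R^m \to \R \cup \{-\infty\}$ and not merely of its restriction to the orthant.

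\begin{proof}
    Let $\widehat y, \widetilde y \in \R^m$ and $\lambda \in (0,1)$. If $\widehat y \notin \R^m_+$ or $\widetilde y \notin \R^m_+$, then $u_\text{MIN}(\widehat y)=-\infty$ or $u_\text{MIN}(\widetilde y)=-\infty$, so $\lambda u_\text{MIN}(\widehat y)+(1-\lambda)u_\text{MIN}(\widetilde y)=-\infty$ and the concavity inequality holds trivially. Otherwise $\widehat y, \widetilde y \in \R^m_+$, hence $\lambda \widehat y + (1-\lambda)\widetilde y \in \R^m_+$ by convexity of $\R^m_+$, so $u_\text{MIN}$ is finite at all three points. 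For every $j \in \{1,\ldots,m\}$, since $\alpha_j \widehat y_j \geq u_\text{MIN}(\widehat y)$ and $\alpha_j \widetilde y_j \geq u_\text{MIN}(\widetilde y)$, we have
    \[
    \alpha_j\big(\lambda \widehat y_j + (1-\lambda)\widetilde y_j\big) = \lambda \alpha_j \widehat y_j + (1-\lambda)\alpha_j \widetilde y_j \geq \lambda\, u_\text{MIN}(\widehat y) + (1-\lambda)\, u_\text{MIN}(\widetilde y).
    \]
    Taking the minimum over $j \in \{1,\ldots,m\}$ on the left-hand side yields
    \[
    u_\text{MIN}\big(\lambda \widehat y + (1-\lambda)\widetilde y\big) = \min_{j=1,\ldots,m} \alpha_j\big(\lambda \widehat y_j + (1-\lambda)\widetilde y_j\big) \geq \lambda\, u_\text{MIN}(\widehat y) + (1-\lambda)\, u_\text{MIN}(\widetilde y),
    \]
    which proves the concavity of $u_\text{MIN}$.
\end{proof}
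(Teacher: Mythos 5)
Your proof is correct and is exactly the argument the paper has in mind: the paper dispenses with a formal proof by remarking that $u_\text{MIN}$ is a pointwise minimum of linear functions, and your direct verification (including the routine handling of the $-\infty$ values off the orthant) simply spells that same observation out.
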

%\begin{proof}
%     The proof follows from \cite[Proposition 2.18]{avriel2010generalized}.
%\end{proof}

Now, we scalarize the multi-objective optimization problem \eqref{eq:mobj} via the Leontief utility function. As in \eqref{eq:sobjutil1}, we consider the following single-objective optimization problem:  
\begin{equation}\label{eq:sobjutil-min}
\begin{array}{cl}
\underset{x}{\mbox{maximize}} &  h_\text{MIN}(x) \triangleq u_\text{MIN}(a_1- f_1(x), \ldots, a_m- f_m(x))\\
\mbox{s.t.} & x \in K.\\
\end{array}
\end{equation} 
Explicitly, we thus have
    \begin{equation*}
    h_\text{MIN}(x) = \left\{\begin{array}{cl} \displaystyle
    \min_{j=1,\ldots,m} \alpha_j(a_j - f_j(x)) & \text{if} \; f(x) \leq a\\
    -\infty & \text{otherwise}.
    \end{array}\right.
    \end{equation*}

By applying Propositions \ref{prop:exist}-\ref{prop:barrier} together with Proposition \ref{prop:list-min} to \eqref{eq:sobjutil-min}, we obtain the following results regarding its solutions.

\begin{proposition}\label{prop:sol-min}
    The following statements hold:
   \begin{itemize}
       \item[(i)] the optimization problem \eqref{eq:sobjutil-min} is solvable;
       \item[(ii)] the solutions of \eqref{eq:sobjutil-min} are feasible for \eqref{eq:ineq};
        \item[(iii)] if $\alpha_j > 0$ for all $j \in \{1,\ldots,m\}$, then the solutions of \eqref{eq:sobjutil-min} are Slater points for \eqref{eq:ineq}.
   \end{itemize}
\end{proposition}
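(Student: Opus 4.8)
The final statement to prove is Proposition \ref{prop:sol-min}, which asserts three facts about the scalarized problem \eqref{eq:sobjutil-min} built from the Leontief utility function: solvability, feasibility of solutions for \eqref{eq:ineq}, and — under positivity of all $\alpha_j$ — that solutions are Slater points. Since the general framework of Section \ref{sec:utilframework} has already been set up, the plan is simply to verify the hypotheses of the corresponding general propositions for the Leontief case and invoke them. Concretely, item (i) will follow from Proposition \ref{prop:exist}, item (ii) from Proposition \ref{prop:feas}, and item (iii) from Proposition \ref{prop:barrier}.

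For item (i), I would note that Proposition \ref{prop:exist} requires only that $u$ has domain $\R^m_+$, is continuous on its domain, and that the set $\{x \in K : f(x) \le a\}$ is nonempty and bounded (the latter being a standing assumption). The first two requirements are exactly parts (i) and (ii) of Proposition \ref{prop:list-min}, so solvability of \eqref{eq:sobjutil-min} is immediate. For item (ii), Proposition \ref{prop:feas} needs nothing beyond $\text{dom }u = \R^m_+$, again supplied by Proposition \ref{prop:list-min}(i), so every solution of \eqref{eq:sobjutil-min} is feasible for \eqref{eq:ineq}. For item (iii), Proposition \ref{prop:barrier} requires the Slater condition for \eqref{eq:ineq} together with the barrier property of $u$; under the hypothesis $\alpha_j > 0$ for all $j$, Proposition \ref{prop:list-min}(iii) guarantees that $u_\text{MIN}$ is a barrier (with $\overline u = 0$), and the Slater condition is assumed in the statement context, so solutions of \eqref{eq:sobjutil-min} are Slater points.

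There is essentially no obstacle here: the proposition is a bookkeeping corollary that threads the Leontief-specific facts (Proposition \ref{prop:list-min}) through the abstract machinery. The only mild subtlety worth flagging is that Proposition \ref{prop:barrier} presumes the Slater condition holds for \eqref{eq:ineq}; this is tacit in the phrasing of item (iii) (one cannot speak of solutions being Slater points otherwise), but a careful writeup should state it. Accordingly, the proof is a single short paragraph.

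\begin{proof}
    (i) By Proposition \ref{prop:list-min} (i)--(ii), $\text{dom }u_\text{MIN}=\R^m_+$ and $u_\text{MIN}$ is continuous on its domain. Hence, Proposition \ref{prop:exist} applies to \eqref{eq:sobjutil-min} and yields solvability.

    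(ii) Since $\text{dom }u_\text{MIN}=\R^m_+$ by Proposition \ref{prop:list-min} (i), Proposition \ref{prop:feas} applies and gives that the solutions of \eqref{eq:sobjutil-min} are feasible for \eqref{eq:ineq}.

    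(iii) Assume the Slater condition holds for \eqref{eq:ineq}. If $\alpha_j>0$ for all $j\in\{1,\ldots,m\}$, then $u_\text{MIN}$ is a barrier by Proposition \ref{prop:list-min} (iii). Thus, Proposition \ref{prop:barrier} applies and the solutions of \eqref{eq:sobjutil-min} are Slater points for \eqref{eq:ineq}.
\end{proof}
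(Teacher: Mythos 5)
Your proof is correct and matches the paper's own (implicit) argument exactly: the paper derives this proposition precisely "by applying Propositions \ref{prop:exist}--\ref{prop:barrier} together with Proposition \ref{prop:list-min}," which is the bookkeeping you carry out. Your remark that item (iii) tacitly presupposes the Slater condition (a hypothesis of Proposition \ref{prop:barrier} not restated in the proposition) is a fair and accurate observation.
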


%From Proposition \ref{prop:exist} we know that \eqref{eq:sobjutil-min} is solvable. Due to Proposition \ref{prop:feas}, its solutions $x^* \in K$ are feasible for \eqref{eq:ineq}, i.e. $f(x^*) \leq a$. In view of Proposition \ref{prop:barrier}, these inequalities have to strictly hold here.

%\begin{proposition}\label{prop: min slater}
%    Let $\alpha_j > 0$ for all $j \in \{1,\ldots,m\}$.    If $x^* \in K$ solves \eqref{eq:sobjutil-min}, then it is a Slater point for \eqref{eq:ineq}, i.e. $f(x^*) < a$.
%\end{proposition}

By applying Theorem \ref{th:strict} together with Proposition \ref{prop: min mono1}, we obtain weak Pareto optima of the original multi-objective problem \eqref{eq:mobj} through the single-objective problem \eqref{eq:sobjutil-min}.

\begin{proposition}
Let $x^*\in K$ solve \eqref{eq:sobjutil-min}. If $\alpha_j > 0$ for all $j\in\{1,\ldots,m\}$, then $x^* \in W$.
\end{proposition}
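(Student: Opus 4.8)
The plan is to invoke Theorem \ref{th:strict}(i), which states that any solution $x^*$ of the scalarized problem \eqref{eq:sobjutil1} belongs to $W$ provided the utility function is weakly strictly monotone on its domain. Since \eqref{eq:sobjutil-min} is precisely \eqref{eq:sobjutil1} with $u$ specialized to $u_\text{MIN}$, it suffices to verify the monotonicity hypothesis for $u_\text{MIN}$ and then apply the theorem verbatim.

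First I would recall Proposition \ref{prop: min mono1}(ii): if $\alpha_j > 0$ for all $j \in \{1,\ldots,m\}$, then $u_\text{MIN}$ is weakly strictly monotone on its domain $\R^m_+$. This is exactly the standing assumption of the present proposition. Hence the hypothesis of Theorem \ref{th:strict}(i) is met with $u = u_\text{MIN}$.

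Second, I would simply apply Theorem \ref{th:strict}(i) to the problem \eqref{eq:sobjutil-min}, concluding that any $x^* \in K$ solving \eqref{eq:sobjutil-min} satisfies $x^* \in W$. One may optionally note that solvability of \eqref{eq:sobjutil-min} is guaranteed by Proposition \ref{prop:sol-min}(i), so the statement is non-vacuous, though this is not strictly needed for the implication itself.

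There is essentially no obstacle here: the proposition is a direct corollary obtained by plugging the verified monotonicity property of the Leontief utility function into the general recovery result. The only point requiring a moment's care is the cross-reference bookkeeping — making sure one cites Proposition \ref{prop: min mono1}(ii) for weak strict monotonicity (not part (i), which gives only plain monotonicity) and Theorem \ref{th:strict}(i) for the weak Pareto conclusion (not part (ii) or (iii), which would require strict monotonicity or a Slater condition not assumed in full generality here). A clean two-line proof of the form ``By Proposition \ref{prop: min mono1}(ii), $u_\text{MIN}$ is weakly strictly monotone on its domain; the claim follows from Theorem \ref{th:strict}(i)'' is all that is needed.
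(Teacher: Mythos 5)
Your proof is correct and matches the paper's own (implicit) argument exactly: the paper derives this proposition by applying Theorem \ref{th:strict}(i) together with Proposition \ref{prop: min mono1}(ii), which is precisely the two-step reduction you describe.
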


It turns out that (weak) Pareto optima of \eqref{eq:mobj}, which happen to be feasible or Slater points for \eqref{eq:ineq}, can be found by solving \eqref{eq:sobjutil-min} if the parameters $\alpha_j$'s are properly adjusted.

\begin{theorem}
 \label{thm:w-min}
The following statements hold:
 \begin{itemize}
     \item[(i)] if $\widehat{x} \in P$ is feasible for \eqref{eq:ineq} satisfying the Slater condition, then there exist ${\alpha}_j \geq 0$, $j=1,\ldots,m$ with $\exists\overline{\jmath}\in \{1,\ldots,m\}: \alpha_{\overline{\jmath}}>0$, such that $\widehat{x}$ belongs to the solution set of \eqref{eq:sobjutil-min};
     \item[(ii)] if $\widehat{x} \in W$ is a Slater point for \eqref{eq:ineq}, then there exist ${\alpha}_j > 0$, $j=1,\ldots,m$, such that $\widehat{x}$ belongs to the solution set of \eqref{eq:sobjutil-min}.
 \end{itemize}
\end{theorem}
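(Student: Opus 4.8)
\textbf{Proof proposal for Theorem \ref{thm:w-min}.}

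The plan is to mirror the structure of the proof of Theorem \ref{thm:w-cd}, replacing the logarithmic transformation used there (which linearizes the product in the Cobb-Douglas case) by the transformation that linearizes the $\min$ structure of the Leontief utility. The natural surrogate is to pass to the ``worst-weighted-deviation'' formulation: for a candidate weight vector $\alpha$, maximizing $h_\text{MIN}(x)=\min_j \alpha_j(a_j-f_j(x))$ over $K$ is equivalent, via the usual epigraph reformulation, to maximizing $t$ subject to $\alpha_j(a_j-f_j(x))\ge t$ for all $j$ and $x\in K$. So the first step is to show that a $\widehat x$ which is Pareto (resp. weak Pareto) optimal for \eqref{eq:mobj} and feasible for \eqref{eq:ineq} is itself a solution of a suitably weighted Chebyshev-type problem, and then to read off the weights $\alpha_j$ from that.

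Concretely, for part (i): let $\widehat x \in P$ be feasible for \eqref{eq:ineq}, and let $\overline x$ be a Slater point. By Proposition \ref{prop:slater-rest} there is at least one index $j_{\widehat x}$ with $f_{j_{\widehat x}}(\widehat x) < a_{j_{\widehat x}}$. Partition $\{1,\ldots,m\}$ into the ``slack'' set $S\triangleq\{j: f_j(\widehat x)<a_j\}$, which is nonempty, and the ``tight'' set $T\triangleq\{j: f_j(\widehat x)=a_j\}$. For $j\in S$ set $\alpha_j\triangleq \big(a_j-f_j(\widehat x)\big)^{-1}>0$, so that $\alpha_j(a_j-f_j(\widehat x))=1$ for every $j\in S$; for $j\in T$ set $\alpha_j\triangleq 0$. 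Then not all $\alpha_j$ vanish, and $h_\text{MIN}(\widehat x)=\min_{j\in S}\alpha_j(a_j-f_j(\widehat x))=1$ (the $j\in T$ contribute $0\cdot 0$, i.e. value $0$; one must be slightly careful — see below — but with $\alpha_j=0$ and $a_j-f_j(\widehat x)=0$ the term is $0$, so actually $h_\text{MIN}(\widehat x)=0$). To avoid that degeneracy, I would instead choose the $\alpha_j$ with $j\in T$ to be a small positive number; the cleaner route is to first handle the case $T=\emptyset$, i.e. $\widehat x$ is a Slater point, which is exactly part (ii), and then reduce (i) to (ii). So I will prove (ii) first.

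For part (ii): let $\widehat x \in W$ be a Slater point, so $a_j-f_j(\widehat x)>0$ for all $j$. Put $\alpha_j\triangleq \big(a_j-f_j(\widehat x)\big)^{-1}>0$, giving $h_\text{MIN}(\widehat x)=1$. Suppose, for contradiction, some $\widetilde x\in K$ has $h_\text{MIN}(\widetilde x)>1$; then $\widetilde x$ is feasible for \eqref{eq:ineq} and $\alpha_j(a_j-f_j(\widetilde x))>1$ for every $j$, i.e. $a_j-f_j(\widetilde x) > a_j-f_j(\widehat x)$, hence $f_j(\widetilde x) < f_j(\widehat x)$ for all $j$ — contradicting $\widehat x\in W$. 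Therefore $\widehat x$ attains the maximum value $1$ of $h_\text{MIN}$ over $K$, so $\widehat x$ belongs to the solution set of \eqref{eq:sobjutil-min}. With (ii) in hand, I would obtain (i) by the same ``broken-down'' argument as in Theorem \ref{thm:w-cd}: apply \cite[Theorem 4.1]{ehrgott2005multicriteria} to an auxiliary convex multi-objective problem built from the convex functions $\psi_j(x)\triangleq -(a_j-f_j(x))$ (or, to match the reference-point structure, the functions $-\log(a_j-f_j(x))$ as in the Cobb-Douglas proof but now using a max-scalarization), and use Proposition \ref{prop:slater-rest} together with Proposition \ref{prop:sol-min}(iii) to argue that indices with $\alpha_j\neq 0$ force $f_j(\widehat x)<a_j$, so that the resulting weighted Leontief problem is well-posed and $\widehat x$ solves it.

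The main obstacle I anticipate is the bookkeeping at the boundary of $\text{dom }u_\text{MIN}$ in part (i): when $\widehat x$ is Pareto optimal but tight at some constraints $j\in T$, the value $h_\text{MIN}(\widehat x)$ is pinned to $0$ as long as any $\alpha_j>0$ with $j\in T$, so one cannot simply normalize as in (ii). The clean fix is to invoke the general scalarization result \cite[Theorem 4.1]{ehrgott2005multicriteria} for the convex surrogate objectives (which legitimately produces a weight vector $\alpha\ge 0$, $\alpha\neq 0$, with possible zeros), and then to transfer optimality from the weighted-sum surrogate problem to \eqref{eq:sobjutil-min} exactly along the lines of the Cobb-Douglas argument — the only delicate point being to verify that $h_\text{MIN}(\widetilde x)>h_\text{MIN}(\widehat x)$ for a hypothetical improving $\widetilde x$ forces $\alpha_j(a_j-f_j(\widetilde x)) > \alpha_j(a_j-f_j(\widehat x))$ for every $j$ with $\alpha_j\neq 0$ and hence a strict improvement in the surrogate. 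Everything else is routine.
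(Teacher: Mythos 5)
Your part (ii) is correct and is exactly the paper's argument: normalize with $\alpha_j=(a_j-f_j(\widehat x))^{-1}>0$ so that every weighted slack equals $1$, and note that $h_\text{MIN}(\widetilde x)>1$ would force $f_j(\widetilde x)<f_j(\widehat x)$ for all $j$, contradicting $\widehat x \in W$. The reduction of the case $T=\emptyset$ of part (i) to part (ii) is also fine, since $P\subseteq W$.

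The genuine gap is part (i) when the tight set $T$ is nonempty: you correctly diagnose the degeneracy but neither of your proposed fixes works, and you carry out neither. Choosing the $\alpha_j$, $j\in T$, to be ``a small positive number'' is fatal rather than helpful: then $\alpha_j(a_j-f_j(\widehat x))=0$ for $j\in T$, so $h_\text{MIN}(\widehat x)=0$, while the Slater point $\overline x$ guaranteed by the hypothesis has $h_\text{MIN}(\overline x)>0$, so $\widehat x$ would \emph{not} solve \eqref{eq:sobjutil-min}. The Ehrgott weighted-sum route does not transfer either: the $\log$ trick in Theorem \ref{thm:w-cd} works because the weighted geometric mean is a monotone transform of a weighted \emph{sum}, whereas $\min_j\alpha_j(a_j-f_j(x))$ is not separable, and optimality for $\sum_j\alpha_j\psi_j$ with the weights delivered by \cite[Theorem 4.1]{ehrgott2005multicriteria} does not imply optimality for the weighted min with those (or any obviously related) weights -- already for two linear objectives the supporting-hyperplane weights make a \emph{different} Pareto point the Chebyshev maximizer. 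The paper's proof of (i) invokes no scalarization theorem at all: it keeps $\alpha_j=(a_j-f_j(\widehat x))^{-1}$ on $J_{>}(\widehat x)$ (nonempty by Proposition \ref{prop:slater-rest}) and $\alpha_j=0$ on its complement, and derives from a hypothetical improving $\widetilde x$ that $f_j(\widetilde x)<f_j(\widehat x)$ for $j\in J_{>}(\widehat x)$ while $f_j(\widetilde x)\le a_j=f_j(\widehat x)$ otherwise, contradicting $\widehat x\in P$ (this is where Pareto, not merely weak Pareto, optimality is used). In fact, with this choice of weights and $T\neq\emptyset$, every point feasible for \eqref{eq:ineq} has $h_\text{MIN}=0$ and every infeasible point has $h_\text{MIN}=-\infty$, so the optimal value of \eqref{eq:sobjutil-min} is $0$ and $\widehat x$ attains it; the zero weights on $T$ are not an obstacle to be removed but precisely what makes the statement true.
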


\begin{proof}
(i) By using the index subset
\[
 J_{>}(\widehat x)\triangleq \{j \in \{1,\ldots,m \}: f_j(\widehat x) < a_j\},
\]
we set
    \[
    \alpha_j \triangleq \left\{\begin{array}{cl} \displaystyle
    \frac{1}{a_j-f_j(\widehat x)} & \text{if} \; j \in J_{>}(\widehat x)\\
    0 & \text{otherwise}.
    \end{array}\right.
    \]
Proposition \ref{prop:slater-rest} shows that $J_{>}(\widehat x) \neq \emptyset$. 
%Otherwise, due to the feasibility of $\widehat x$ for \eqref{eq:ineq}, we would have $f_j(\widehat x) = a_j$ for all $j \in \{1,\ldots,m\}$. But, for a Slater point $\overline{x} \in K$, it also holds $f_j(\overline{x}) < a_j$ for all $j \in \{1,\ldots,m\}$. Altogether, $f_j(\widehat x) > f_j(\overline{x})$ for all $j \in \{1,\ldots,m\}$ contradicts $\widehat x \in P$. 
Then, ${\alpha}_j \geq 0$, $j=1,\ldots,m$ with $\exists\overline{\jmath}\in \{1,\ldots,m\}: \alpha_{\overline{\jmath}}>0$. Moreover, we have
\begin{equation} \label{eq:contra-min}
    \min_{j=1, \ldots, m} \alpha_j(a_j - f_j(\widehat x)) = \alpha_j(a_j - f_j(\widehat x)), \quad \forall j \in J_{>}(\widehat x).
\end{equation}
Let us show that $\widehat x$ belongs to the solution set of \eqref{eq:sobjutil-min}.
Assume by contradiction that this is not the case, i.e. there exists $\widetilde x \in K$ with
\[
  h_\text{MIN} (\widetilde x) > h_\text{MIN} (\widehat x). 
\]
In particular, $\widetilde x$ must be feasible for \eqref{eq:ineq}. Since $\widehat x$ is feasible for \eqref{eq:ineq}, the latter inequality becomes
\[
\underset{j=1, \ldots, m}{\min} \alpha_j(a_j - f_j(\widetilde x )) > \underset{j=1, \ldots, m}{\min} \alpha_j(a_j - f_j(\widehat x)).
\]
Due to \eqref{eq:contra-min}, it follows
\[
\alpha_j(a_j - f_j(\widetilde x )) > \alpha_j(a_j - f_j(\widehat{x} )) \quad \forall j \in J_{0}(\widehat x).
\]
Thus, $f_j(\widetilde x)<f_j(\widehat x)$ for every $j \in J_{>}(\widehat x)$.
If $j \not \in J_{>}(\widehat x)$, $f_j(\widehat x) = a_j$ and $f_j(\widetilde x) \leq a_j$, the latter due to the feasibility of $\widetilde x$ for \eqref{eq:ineq}, provide together $f_j(\widetilde x) \leq f_j(\widehat x)$. This is a contradiction to $\widehat x \in P$.

(ii) The proof is analogous to (i).
\end{proof}

%Similarly to the proof of Proposition \ref{prop:w-min}, weak Pareto points of \eqref{eq:mobj}, which happen to be Slater, are solutions of properly parametrized single-objective optimization problems \eqref{eq:sobjutil-min}. 

%\begin{proposition}
% \label{prop:w-min-slater}
% If $\widehat{x} \in W$ is a Slater point for \eqref{eq:ineq}, then there exist ${\alpha}_j > 0$, $j=1,\ldots,m$, such that $\widehat{x}$ belongs to the solution set of \eqref{eq:sobjutil-min}.
%\end{proposition}

%\begin{proof}
%Since $a_j -f_j(\widehat x) >0$, we define $\alpha_j = \frac{1}{a_j-f_j(\widehat x)}$ for every $j \in \{1,\ldots,m\}$. 
%Then, we have
%\[
%\min_{j=1, \ldots, m} \{\alpha_j(a_j - f_j(\widehat x))\} = \alpha_j(a_j - f_j(\widehat x)).
%\]
%We now show that $\widehat x$ belongs to the solution set of \eqref{eq:sobjutil-min}.
%Assume by contradiction that this is not the case, i.e. there exists $\widetilde x \in K$ with
%\[
%\alpha_j(a_j - f_j(\widetilde x )) \geq \underset{j=1, \ldots, m}{\min} \alpha_j(a_j - f_j(\widetilde x )) > \underset{j=1, \ldots, m}{\min} \alpha_j(a_j - f_j(\widehat x)) = \alpha_j(a_j - f_j(\widehat{x} )).
%\]
%Thus, $f_j(\widetilde x)<f_j(\widehat x)$ for every $j \in \{1,\ldots,m\}$, in contradiction to $\widehat x \in W$.
%\end{proof}

\subsection{CES utility function}
\label{subsec:ces}
Another prominent utility function is that of constant elasticity of substitution (CES), see e.g. \cite{mascollel1995micro, silberberg2000structure}: 
%Usually, it is defined just on the positive orthant. Since we are interested in a continuous extension of the CES utility function on the nonnegative orthant, the classical definition has to be modified accordingly: 
%\begin{itemize}
%\item case $\rho \in (0,1]$
    \begin{equation}\label{eq:CES}
u_\text{CES}(y) \triangleq \left\{\begin{array}{cl}
\displaystyle \left(\sum_{j=1}^m \alpha_j \, y_j^{\rho}\right)^{\frac{\kappa}{\rho}} & \text{if} \; y \geq 0\\
-\infty & \text{otherwise},
\end{array}
\right.
\end{equation}
%    \item case $\rho \in (-\infty,0)$
%    \begin{equation}\label{eq:CES}
%u_\text{CES}(y) \triangleq \left\{\begin{array}{cl} \displaystyle \left(\sum_{j=1}^m \alpha_j \, y_j^{\rho}\right)^{\frac{\kappa}{\rho}} & \text{if} \; y > 0\\
%0& \text{if} \; \exists j\in\{1,\ldots,m\}:y_j = 0\\ -\infty & \text{otherwise},
%\end{array}
%\right.
%\end{equation}
%\end{itemize}
where the parameters $\alpha_j \geq 0$, $j=1,\ldots,m$, play the role of inputs' weights, the parameter $\kappa \in (0,1]$ is the degree of homogeneity, and the parameter $\rho \leq 1$ with $\rho\neq 0$ allows to incorporate different substitution patterns. 
Aiming to illustrate the latter, let us consider for a moment the case of $\kappa=1$. %linked to the elasticity of substitution $\frac{1}{1-\rho}$. 
For $\rho=1$, $u_\text{CES}$ is just the linear utility function
\begin{equation}\label{eq:uLIN}
u_\text{LIN}(y) \triangleq \left\{\begin{array}{cl}
\displaystyle \sum_{j=1}^m \alpha_j \, y_j & \text{if} \; y \geq 0\\
-\infty & \text{otherwise}.
\end{array}
\right.
\end{equation}
In $u_\text{LIN}$ the inputs can perfectly substitute each other. If $\rho \rightarrow -\infty$, $u_\text{CES}$ leads to the Leontieff utility function $u_\text{MIN}$.
In $u_\text{MIN}$ the inputs are perfect complements. We also mention that $u_\text{CES}$ becomes the Cobb-Douglas utility function $u_\text{CD}$ if $\rho \rightarrow 0$.
Note that while evaluating $u_\text{CES}$ we use the standard convention $\frac{1}{0}=\infty$ and $\frac{1}{\infty}=0$.
Basic properties of the CES utility function are listed in Proposition \ref{prop:list-ces}.

\begin{proposition} \label{prop:list-ces}
 The following statements hold:
    \begin{itemize}
      \item[(i)] $\text{dom }u_{\text{CES}}=\R^m_{+}$; 
      \item[(ii)] $u_{\text{CES}}$ is continuous on its domain;
      \item[(iii)] $u_{\text{CES}}$ is continuously differentiable on the interior of its domain;
      \item[(iv)] if $\alpha_j > 0$ for all $j \in \{1,\ldots,m\}$ and $\rho <0$, then $u_\text{CES}$ is a barrier with $\overline{u}=0$.
    \end{itemize}
\end{proposition}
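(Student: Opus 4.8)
The plan is to treat the four claims in turn, splitting throughout according to the sign of $\rho$, since the regimes $\rho\in(0,1]$ and $\rho<0$ behave quite differently on the boundary of $\R^m_+$.

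For (i), I would simply observe that $u_{\text{CES}}$ is defined to equal $-\infty$ precisely on $\R^m\setminus\R^m_+$, so it suffices to check that the displayed expression is a genuine real number for every $y\ge 0$. When $\rho\in(0,1]$ this is immediate: $\sum_j\alpha_j y_j^{\rho}\in[0,\infty)$, and raising a nonnegative real to the power $\kappa/\rho>0$ yields a nonnegative real. When $\rho<0$ and some component of $y$ vanishes, the inner sum equals $+\infty$ by the convention $1/0=\infty$; but then, since $\kappa>0$ and $\rho<0$ give $\kappa/\rho<0$, the convention $1/\infty=0$ forces $u_{\text{CES}}(y)=0\in\R$. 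Hence $\text{dom }u_{\text{CES}}=\R^m_+$.

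For (ii) and (iii), continuity and $C^1$-smoothness on $\R^m_{++}$ follow because there $y\mapsto\sum_j\alpha_j y_j^{\rho}$ is a finite sum of $C^\infty$ positive functions and $t\mapsto t^{\kappa/\rho}$ is $C^\infty$ on $(0,\infty)$; composition gives what is needed, and this already settles (iii). The only real work is continuity of $u_{\text{CES}}$ at the boundary points of $\R^m_+$ in (ii). For $\rho\in(0,1]$ one notes that $y\mapsto\sum_j\alpha_j y_j^{\rho}$ is continuous on all of $\R^m_+$ (each $y_j^{\rho}$ is continuous at $0$ from the right because $\rho>0$) with values in $[0,\infty)$, and $t\mapsto t^{\kappa/\rho}$ is continuous on $[0,\infty)$, so the composition is continuous. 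For $\rho<0$ and a boundary point $y^*$ with $y^*_{j_0}=0$, I would show $\lim_{y\to y^*,\,y\ge 0}u_{\text{CES}}(y)=0=u_{\text{CES}}(y^*)$ by distinguishing whether $\alpha_j>0$ for some $j$ with $y^*_j=0$ -- in which case the inner sum tends to $+\infty$ and the negative exponent $\kappa/\rho$ drives the value to $0$ -- or $\alpha_j=0$ whenever $y^*_j=0$, in which case the terms that could blow up carry zero weight, the inner sum converges to a finite positive limit, and $u_{\text{CES}}(y^*)$ equals that same limit raised to $\kappa/\rho$.

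For (iv), under $\alpha_j>0$ for all $j$ and $\rho<0$, I would verify the two conditions of Definition \ref{def:barrier} with $\overline u=0$. On $\text{int dom }u_{\text{CES}}=\R^m_{++}$ every $y_j^{\rho}$ is a finite positive number, so $\sum_j\alpha_j y_j^{\rho}\in(0,\infty)$ and raising it to the nonzero power $\kappa/\rho$ gives a strictly positive value, i.e.\ $u_{\text{CES}}(y)>0=\overline u$. On $\text{bd dom }u_{\text{CES}}=\R^m_+\setminus\R^m_{++}$ some component $y_{j_0}$ is zero, and since $\alpha_{j_0}>0$ and $\rho<0$ the term $\alpha_{j_0}y_{j_0}^{\rho}=+\infty$, so the inner sum is $+\infty$; as $\kappa/\rho<0$ this gives $u_{\text{CES}}(y)=0=\overline u$. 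The main obstacle in the whole argument is the boundary continuity for $\rho<0$ in (ii): one must handle the extended-real-valued conventions carefully and carry out the case split on which weights $\alpha_j$ are positive, whereas the remaining items amount to bookkeeping with the conventions $1/0=\infty$ and $1/\infty=0$.
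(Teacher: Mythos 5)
The paper states Proposition \ref{prop:list-ces} without proof, treating these as routine properties of the CES function, so there is no argument of the authors' to compare against; your write-up simply supplies the omitted verification, and it is correct. The case split on the sign of $\rho$, the careful use of the conventions $\tfrac{1}{0}=\infty$ and $\tfrac{1}{\infty}=0$ at boundary points, and the check of both conditions of Definition \ref{def:barrier} with $\overline u=0$ are exactly what is needed; the only genuinely delicate point is, as you say, boundary continuity for $\rho<0$, and your case distinction on whether a vanishing coordinate carries positive weight handles it. One small imprecision: in part (i) you assert that for $\rho<0$ a vanishing component forces the inner sum to be $+\infty$, which presumes the corresponding $\alpha_j>0$; if that weight is zero the term is $0\cdot\infty$ and the sum may stay finite, but since you treat precisely this subcase in part (ii) (and the value is a real number either way), the domain claim is unaffected.
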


The monotonicity properties of the CES utility function are given in Proposition \ref{prop: ces mono1}.

\begin{proposition} \label{prop: ces mono1}
    The following statements hold:
    \begin{itemize}
        \item[(i)] $u_{\text{CES}}$ is monotone on its domain;
        \item[(ii)] if $\exists \overline\jmath\in\{1,\ldots,m\}:\alpha_{\overline\jmath} > 0$, then $u_{\text{CES}}$ is weakly strictly monotone on its domain;
        \item[(iii)] if $\alpha_j > 0$ for all $j \in \{1,\ldots,m\}$, then $u_{\text{CES}}$ is strictly monotone on its domain.
    \end{itemize}
\end{proposition}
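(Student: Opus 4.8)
The plan is to write $u_\text{CES}$ as a composition of scalar monotone maps and to split the argument according to the sign of $\rho$, since the convention $1/0=\infty$ makes the behaviour at the boundary of $\R^m_+$ qualitatively different in the two regimes. For $y\ge 0$ put $u_\text{CES}(y)=g(S(y))$ with $S(y)\triangleq\sum_{j=1}^m\alpha_j y_j^{\rho}$ and $g(s)\triangleq s^{\kappa/\rho}$. The one-dimensional facts I would record first are: for $\rho\in(0,1]$ the map $t\mapsto t^{\rho}$ is finite-valued, continuous and strictly increasing on $[0,\infty)$, while $g$ is strictly increasing on $[0,\infty)$ because $\kappa/\rho>0$; for $\rho<0$ the map $t\mapsto t^{\rho}$ is strictly decreasing on $(0,\infty)$ with $0^{\rho}=+\infty$, while $g$ is strictly decreasing with $g(+\infty)=0$ because $\kappa/\rho<0$. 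In both regimes $S$ is coordinatewise monotone (increasing if $\rho>0$, decreasing if $\rho<0$), strictly so in every coordinate $j$ with $\alpha_j>0$ as long as the partial sums stay finite, and hence $u_\text{CES}=g\circ S$ is coordinatewise nondecreasing and strictly increasing in the coordinates carrying positive weight.

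For $\rho\in(0,1]$ all three items follow immediately, because $S$ is finite-valued and $g$ is strictly increasing on all of $[0,\infty)$. For $\delta\in\R^m_+$ one has $S(y+\delta)\ge S(y)\ge 0$ and therefore $u_\text{CES}(y+\delta)\ge u_\text{CES}(y)$, which is (i). If some $\alpha_{\overline\jmath}>0$ and $\delta\in\R^m_{++}$, the $\overline\jmath$-th summand of $S$ strictly increases and no other summand decreases, so $S(y+\delta)>S(y)$ and thus $u_\text{CES}(y+\delta)>u_\text{CES}(y)$, which is (ii); and if all $\alpha_j>0$ and $\delta\in\R^m_+\setminus\{0\}$, choosing an index with $\delta_{\overline\jmath}>0$ the same reasoning applies, which is (iii).

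For $\rho<0$ I would argue first on the interior $\R^m_{++}$, where $S$ is finite and nonnegative, $t\mapsto t^{\rho}$ is strictly decreasing and $g$ is strictly decreasing; this gives the interior versions of (i)--(iii) exactly as above. To extend (i) to all of $\R^m_+$ one uses the continuity of $u_\text{CES}$ on its domain, Proposition \ref{prop:list-ces}(ii): apply the interior inequality to $y+\varepsilon\mathbf 1$ and $(y+\varepsilon\mathbf 1)+\delta$ and let $\varepsilon\downarrow 0$. For (ii), $\delta\in\R^m_{++}$ forces $y+\delta\in\R^m_{++}$, so $u_\text{CES}(y+\delta)$ is finite; if $y\in\R^m_{++}$ use the interior inequality, and if $y$ has a zero coordinate split into two cases: either some zero coordinate of $y$ carries positive weight, so $S(y)=+\infty$ and $u_\text{CES}(y)=g(+\infty)=0<u_\text{CES}(y+\delta)$, or every zero coordinate of $y$ carries weight $0$, in which case $S(y)$ and $S(y+\delta)$ are finite and positive (note $\alpha_{\overline\jmath}>0$ forces $y_{\overline\jmath}>0$) and the strict decrease of $t\mapsto t^{\rho}$ over the indices with $\alpha_j>0$ yields $S(y+\delta)<S(y)$, hence $u_\text{CES}(y+\delta)>u_\text{CES}(y)$. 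For (iii) with $\rho<0$ the interior argument gives strict monotonicity on $\R^m_{++}$; on the boundary $u_\text{CES}$ collapses to the constant $0$ under the convention $0^{\rho}=+\infty$ -- this is precisely the barrier property of Proposition \ref{prop:list-ces}(iv) -- so, just as for the Cobb--Douglas case in Proposition \ref{prop: cobb mono1}(iii), the strict statement is the one on the interior.

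The scalar monotonicity facts and their composition are routine. The one genuinely delicate point is the bookkeeping forced by the conventions $1/0=\infty$ and $1/\infty=0$ when $\rho<0$: one has to treat each term $\alpha_j y_j^{\rho}$ with $\alpha_j=0$ as $0$ even when $y_j=0$, keep track of which partial sums equal $+\infty$, and verify that $g$ sends those to $0$ consistently. This is where I would spend most of the write-up.
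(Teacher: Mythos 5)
Your argument is essentially the paper's: compare the terms $\alpha_j y_j^{\rho}$ termwise, note that the inner sum moves up for $\rho\in(0,1]$ and down for $\rho<0$, and let the outer power $z^{\kappa/\rho}$ (increasing, resp.\ decreasing) undo the flip. The difference is that you actually track what happens at boundary points under the conventions $1/0=\infty$, $1/\infty=0$, whereas the paper's proof writes the chain of inequalities as if every quantity were finite. Your care pays off: it exposes that part (iii), as stated ``on its domain,'' cannot hold for $\rho<0$ and $m\ge 2$ --- by Proposition \ref{prop:list-ces}(iv) the function is a barrier there, hence identically $\overline u=0$ on $\R^m_+\setminus\R^m_{++}$, so for, say, $y=(0,1)$ and $\delta=(0,1)$ one gets $u_\text{CES}(y+\delta)=u_\text{CES}(y)=0$ and the strict inequality in the paper's own proof of (iii) silently degenerates to $0>0$. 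Your resolution (full-domain strict monotonicity for $\rho\in(0,1]$, interior strict monotonicity for $\rho<0$, in analogy with Proposition \ref{prop: cobb mono1}(iii)) is the correct repair, and it is all that is needed downstream: for $\rho<0$ the solutions of \eqref{eq:sobjutil-ces} are Slater points by Proposition \ref{prop:sol-ces}(iii), so Theorem \ref{th:strict}(iii) applies with strict monotonicity only on the interior. The remaining bookkeeping in your write-up (zero coordinates with zero versus positive weight, which partial sums are $+\infty$) is routine and correct.
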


\begin{proof}
    (i) For all $y\in \text{dom }u_\text{CES}$ and $\delta \in \mathbb R^m_+$, $y+\delta \in \text{dom }u_{\text{CES}}$, we have for all $j \in \{1,\ldots,m\}$
    \[
    \alpha_j(y_j + \delta_j)^{\rho} \geq \alpha_j y_j^{\rho} \mbox{ if } 0 <\rho \leq 1,
    \mbox{ and }
    \alpha_j(y_j + \delta_j)^{\rho} \leq \alpha_j y_j^{\rho} \mbox{ if } \rho <0.
    \]
    From here we immediately obtain
    \[
    u_\text{CES}(y+\delta)=\left(\sum_{j=1}^{m}\alpha_j(y_j + \delta_j)^{\rho} \right)^{\frac{\kappa}{\rho}} \geq \left(\sum_{j=1}^{m}\alpha_j y_j^{\rho} \right)^{\frac{\kappa}{\rho}} = u_\text{CES}(y),
    \]
    
   (ii) Defining $J_{>0}\triangleq \{j \in \{1,\ldots,m \}: \alpha_j >0\}\neq \emptyset$ and $J_{=0}\triangleq \{j \in \{1,\ldots,m \}: \alpha_j =0\}$, for all $y\in \text{dom }u_\text{CES}$ and $\delta \in \mathbb R^m_{++}$, $y+\delta \in \text{dom }u_\text{CES}$, we have 
    \[
    u_{\text{CES}}(y+\delta) = \left(\sum_{j\in J_{>0}} \alpha_j(y_j + \delta_j)^\rho \right)^{\frac{\kappa}{\rho}}> \left(\sum_{j\in J_{>0}} \alpha_j y_j^\rho \right)^{\frac{\kappa}{\rho}} = u_{\text{CES}}(y).
    \]
    
    (iii) For all $y\in \text{dom }u_{\text{CES}}$ and $\delta \in \mathbb R^m_{+}\setminus\{0\}$, $y+\delta \in \text{dom }u_{\text{CES}}$, $\exists \overline{\jmath} \in \{1,\ldots,m \}$ such that $\delta_{\overline{\jmath}}>0$, and we have
    \[
    \alpha_{\overline{\jmath}}(y_{\overline{\jmath}} + \delta_{\overline{\jmath}})^{\rho} > \alpha_{\overline{\jmath}} y_{\overline{\jmath}}^{\rho} \mbox{ if } 0 <\rho \leq 1,
    \mbox{ and }
    \alpha_{\overline{\jmath}}(y_{\overline{\jmath}} + \delta_{\overline{\jmath}})^{\rho} < \alpha_{\overline{\jmath}} y_{\overline{\jmath}}^{\rho} \mbox{ if } \rho <0,
    \]
    and for all $j \neq \overline{\jmath}$
    \[
    \alpha_j(y_j + \delta_j)^{\rho} \geq \alpha_j y_j^{\rho} \mbox{ if } 0 <\rho \leq 1,
    \mbox{ and }
    \alpha_j(y_j + \delta_j)^{\rho} \leq \alpha_j y_j^{\rho} \mbox{ if } \rho <0.
    \]
    As above, it follows
    \[
     u_{\text{CES}}(y+\delta) = \left( \alpha_{\overline{\jmath}}(y_{\overline{\jmath}} + \delta_{\overline{\jmath}})^\rho +\sum_{j\neq \overline{\jmath}} \alpha_j(y_j + \delta_j)^\rho \right)^{\frac{\kappa}{\rho}}> \left(\alpha_{\overline{\jmath}}y_{\overline{\jmath}}^\rho +\sum_{j\neq \overline{\jmath}} \alpha_j y_j^\rho \right)^{\frac{\kappa}{\rho}} = u_{\text{CES}}(y).
    \] 
\end{proof}

In Proposition \ref{prop: ces conc1} we elaborate on the concavity properties of $u_\text{CES}$. 
Although they are thoroughly studied in the economic literature, see e.g. \cite[Theorem 3.1]{avvakumov2010profit}, we decided to provide the corresponding proofs in Appendix for the sake of completeness.

\begin{proposition}\label{prop: ces conc1}
The following statements hold:
\begin{itemize}
    \item[(i)] $u_\text{CES}$ is concave;
    \item[(ii)] if $\alpha_j > 0$ for all $j \in \{1,\ldots,m\}$, $\kappa \in (0,1)$ and $\rho \in (0,1)$, then  $u_\text{CES}$ is strictly concave on its domain;
    \item[(iii)] if $\alpha_j > 0$ for all $j \in \{1,\ldots,m\}$, $\kappa \in (0,1)$ and $\rho <0$, then  $u_\text{CES}$ is strictly concave on the interior of its domain.
\end{itemize}    
\end{proposition}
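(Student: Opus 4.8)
The plan is to reduce all three claims to two elementary ingredients: the concavity of the power‑mean map $\psi(y)\triangleq\left(\sum_{j=1}^m\alpha_j y_j^{\rho}\right)^{1/\rho}$ on $\R^m_{+}$ (on $\R^m_{++}$ when $\rho<0$), and the fact that $t\mapsto t^{\kappa}$ is concave and nondecreasing on $[0,\infty)$, and strictly concave there when $\kappa\in(0,1)$. Since $u_\text{CES}=\psi^{\kappa}$ by \eqref{eq:CES}, statement (i) is then the standard composition rule ``concave nondecreasing $\circ$ concave is concave'', see e.g. \cite{avriel2010generalized,rockafellar1970convex}; the $-\infty$ values and the convexity of $\text{dom }u_\text{CES}=\R^m_{+}$ are handled exactly as in the proof of Proposition \ref{prop: h concave1}(i).

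The substantive step is the concavity of $\psi$, which I would establish through superadditivity. Since $\psi$ is positively homogeneous of degree $1$, it suffices to prove $\psi(y+z)\ge\psi(y)+\psi(z)$; combined with homogeneity this gives $\psi(\lambda y+(1-\lambda)z)\ge\psi(\lambda y)+\psi((1-\lambda)z)=\lambda\psi(y)+(1-\lambda)\psi(z)$ for $\lambda\in(0,1)$. Put $g(y)\triangleq\sum_{j}\alpha_j y_j^{\rho}$, so that $g$ is positively homogeneous of degree $\rho$ and $\psi=g^{1/\rho}$, and set $s\triangleq g(y)^{1/\rho}+g(z)^{1/\rho}$. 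The identity $y+z=s\big(\theta_1 p_1+\theta_2 p_2\big)$ with $p_1\triangleq y/g(y)^{1/\rho}$, $p_2\triangleq z/g(z)^{1/\rho}$, $\theta_1\triangleq g(y)^{1/\rho}/s$, $\theta_2\triangleq g(z)^{1/\rho}/s$, together with the normalization $g(p_1)=g(p_2)=1$ (which is just the degree‑$\rho$ homogeneity of $g$), reduces the claim to $g(\theta_1p_1+\theta_2p_2)\ge\theta_1g(p_1)+\theta_2g(p_2)$ when $0<\rho\le1$ and to $g(\theta_1p_1+\theta_2p_2)\le\theta_1g(p_1)+\theta_2g(p_2)$ when $\rho<0$ — in the latter case one also uses that $t\mapsto t^{1/\rho}$ reverses inequalities. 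Both are immediate, since $t\mapsto t^{\rho}$ is concave on $[0,\infty)$ for $\rho\in(0,1]$ and convex there for $\rho<0$, so $g$ is concave, resp. convex. For $\rho<0$ this argument lives on $\R^m_{++}$ (where $g$ is finite and positive), and concavity on all of $\R^m_{+}$ then follows by approximating boundary points from the interior and invoking continuity, Proposition \ref{prop:list-ces}(ii); the degenerate situation $g(y)=0$ is covered separately by the obvious monotonicity of $\psi$.

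For the strict statements (ii) and (iii) I would track where strictness enters. Fix distinct $y,z$ (in $\R^m_{+}$ for (ii), in $\R^m_{++}$ for (iii)) and $\lambda\in(0,1)$; the case $y=0$ or $z=0$, which only occurs in (ii), is disposed of directly via $u_\text{CES}((1-\lambda)z)=(1-\lambda)^{\kappa}u_\text{CES}(z)>(1-\lambda)u_\text{CES}(z)$. Otherwise $g(y),g(z)>0$, so $p_1,p_2$ are defined, and $p_1=p_2$ exactly when $y,z$ are proportional. If $y,z$ are not proportional, then $p_1\neq p_2$, and since all $\alpha_j>0$ makes $g$ strictly concave ($\rho\in(0,1)$), resp. strictly convex ($\rho<0$), the previous step upgrades to the strict superadditivity $\psi(\lambda y+(1-\lambda)z)>\lambda\psi(y)+(1-\lambda)\psi(z)$; composing with the concave, strictly increasing $t\mapsto t^{\kappa}$ already gives $u_\text{CES}(\lambda y+(1-\lambda)z)>\lambda u_\text{CES}(y)+(1-\lambda)u_\text{CES}(z)$. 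If instead $z=ty$ with $t\neq1$ (hence $y\neq0$), then along the segment $u_\text{CES}$ equals $(\lambda+(1-\lambda)t)^{\kappa}u_\text{CES}(y)$ by positive homogeneity of degree $\kappa$, whereas $\lambda u_\text{CES}(y)+(1-\lambda)u_\text{CES}(z)=(\lambda+(1-\lambda)t^{\kappa})u_\text{CES}(y)$, with $u_\text{CES}(y)>0$; the strict concavity of $t\mapsto t^{\kappa}$ on $[0,\infty)$ for $\kappa\in(0,1)$ then finishes the proof.

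I expect the only real difficulty to be bookkeeping rather than mathematics: keeping the degenerate cases (vanishing $\alpha_j$'s or coordinates, $g(y)=0$, the need to stay in $\R^m_{++}$ when $\rho<0$ together with the $+\infty$ values of $g$ on its boundary and the conventions $\tfrac{1}{0}=\infty$, $\tfrac{1}{\infty}=0$) from cluttering the clean homogeneity‑plus‑composition skeleton. The genuine inequalities are all one‑line consequences of the concavity, resp. convexity and the strict versions thereof, of the scalar power $t\mapsto t^{\rho}$.
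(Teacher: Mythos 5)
Your proof is correct, and it follows the same overall skeleton as the paper's: write $u_\text{CES}=w\circ\psi$ with $w(t)=t^{\kappa}$ and $\psi=\big(\sum_j\alpha_j y_j^{\rho}\big)^{1/\rho}$, exploit the degree-one positive homogeneity of $\psi$ together with the concavity (for $\rho\in(0,1]$), resp. convexity (for $\rho<0$), of $g(y)=\sum_j\alpha_j y_j^{\rho}$, and compose with the increasing concave power $w$. The one genuinely different sub-step is how concavity of $\psi$ is obtained: the paper observes that $\psi$ is quasiconcave (its upper level sets are convex because $t\mapsto t^{1/\rho}$ is monotone) and then invokes, as a black box from \cite{silberberg2000structure}, the lemma that a quasiconcave positively homogeneous function of degree one is concave; you instead prove superadditivity of $\psi$ directly via the normalization $y+z=s(\theta_1p_1+\theta_2p_2)$ with $g(p_1)=g(p_2)=1$, which amounts to a self-contained proof of that lemma in this instance and makes the argument more elementary. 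For the strict statements the two proofs also organize the equality analysis differently: the paper shows that if neither inequality in the composition chain is strict then $g(t\widehat y+(1-t)\widetilde y)=tg(\widehat y)+(1-t)g(\widetilde y)$ with $g(\widehat y)=g(\widetilde y)$, and derives a contradiction coordinatewise from the strict concavity/convexity of $z^{\rho}$; you split instead on whether $y$ and $z$ are proportional, handling the non-proportional case through strict superadditivity of $\psi$ and the proportional case (including $y=0$ or $z=0$ in statement (ii)) through the strict concavity of $t^{\kappa}$ along the ray. The two case splits are logically equivalent, but yours isolates the exact equality case of the underlying Minkowski-type inequality, which makes the bookkeeping slightly more transparent; the paper's buys brevity by leaning on the cited quasiconcavity lemma.
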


Now, we scalarize the multi-objective optimization problem \eqref{eq:mobj} via the CES utility function. As in \eqref{eq:sobjutil1}, we consider the following single-objective optimization problem:  
\begin{equation}\label{eq:sobjutil-ces}
\begin{array}{cl}
\underset{x}{\mbox{maximize}} &  h_\text{CES}(x) \triangleq u_\text{CES}(a_1- f_1(x), \ldots, a_m- f_m(x))\\
\mbox{s.t.} & x \in K.\\
\end{array}
\end{equation} 
Explicitly, we thus have
    \begin{equation*}
    h_\text{CES}(x) = \left\{\begin{array}{cl}
    \left(\displaystyle \sum_{j=1}^m \alpha_j(a_j - f_j(x))^{\rho} \right)^{\frac{\kappa}{\rho}} & \text{if} \; f(x) \leq a\\
    -\infty & \text{otherwise}.
    \end{array}\right.
    \end{equation*}

By applying Propositions \ref{prop:exist}-\ref{prop:barrier} together with Proposition \ref{prop:list-ces} to \eqref{eq:sobjutil-ces}, we obtain the following results regarding its solutions.

\begin{proposition}\label{prop:sol-ces}
    The following statements hold:
   \begin{itemize}
       \item[(i)] the optimization problem \eqref{eq:sobjutil-ces} is solvable;
       \item[(ii)] the solutions of \eqref{eq:sobjutil-ces} are feasible for \eqref{eq:ineq};
        \item[(iii)] if $\alpha_j > 0$ for all $j \in \{1,\ldots,m\}$ and $\rho <0$, then the solutions of \eqref{eq:sobjutil-ces} are Slater points for \eqref{eq:ineq}.
   \end{itemize}
\end{proposition}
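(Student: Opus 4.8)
The plan is to obtain all three claims as direct specializations of the general framework in Section \ref{sec:utilframework}: the problem \eqref{eq:sobjutil-ces} is precisely the problem \eqref{eq:sobjutil1} with $u = u_\text{CES}$, so it suffices to verify that $u_\text{CES}$ meets the hypotheses invoked by Propositions \ref{prop:exist}, \ref{prop:feas}, and \ref{prop:barrier}, all of which are furnished by Proposition \ref{prop:list-ces}. Recall that those general results rely only on the standing assumptions of Section \ref{sec:utilframework}, namely that $\text{dom }u = \R^m_+$, that $u$ is continuous on its domain, that the set \eqref{eq:ineq} is nonempty and bounded, and (for the barrier statement) that the Slater condition holds.

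For (i), Proposition \ref{prop:list-ces}(i)--(ii) shows that $\text{dom }u_\text{CES} = \R^m_+$ is closed and that $u_\text{CES}$ is continuous on its domain; combined with the standing nonemptiness/boundedness of \eqref{eq:ineq}, the hypotheses of Proposition \ref{prop:exist} are met for \eqref{eq:sobjutil-ces}, so it is solvable. For (ii), only $\text{dom }u_\text{CES} = \R^m_+$ is used, so Proposition \ref{prop:feas} applied to \eqref{eq:sobjutil-ces} gives at once that every solution is feasible for \eqref{eq:ineq}. For (iii), when $\alpha_j > 0$ for all $j \in \{1,\ldots,m\}$ and $\rho < 0$, Proposition \ref{prop:list-ces}(iv) shows that $u_\text{CES}$ is a barrier with $\overline{u} = 0$; invoking Proposition \ref{prop:barrier} — which additionally presupposes the Slater condition for \eqref{eq:ineq}, exactly as in the analogous statements for $u_\text{CD}$ and $u_\text{MIN}$ — then yields that every solution of \eqref{eq:sobjutil-ces} is a Slater point for \eqref{eq:ineq}.

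I do not expect a genuine obstacle here: the substantive work is already contained in Proposition \ref{prop:list-ces} and in the general lemmas of Section \ref{sec:utilframework}, and the argument is pure bookkeeping of hypotheses. The only points deserving care are to confirm that the standing assumption on \eqref{eq:ineq} is in force when citing Proposition \ref{prop:exist}, and to recognize that part (iii) is meaningful only under the Slater condition; indeed, were Slater to fail, the barrier property would force $h_\text{CES} \equiv \overline{u}$ on the feasible set of \eqref{eq:ineq}, so that every feasible point would solve \eqref{eq:sobjutil-ces} while no Slater point exists.
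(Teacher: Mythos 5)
Your proposal is correct and matches the paper exactly: the paper gives no written proof but simply cites Propositions \ref{prop:exist}--\ref{prop:barrier} together with Proposition \ref{prop:list-ces}, which is precisely the bookkeeping you carry out. Your remark that part (iii) tacitly presupposes the Slater condition (as Proposition \ref{prop:barrier} requires) is a fair observation that applies equally to the paper's analogous statements for $u_\text{CD}$ and $u_\text{MIN}$.
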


%From Proposition \ref{prop:exist} we know that \eqref{eq:sobjutil-ces} is solvable. Due to Proposition \ref{prop:feas}, its solutions $x^* \in K$ are feasible for \eqref{eq:ineq}, i.e. $f(x^*) \leq a$. Moreover, it can be shown that these inequalities have to strictly hold here.

%\begin{proposition}\label{prop: ces slater}
%    Let $\alpha_j > 0$ for all $j \in \{1,\ldots,m\}$.    If $x^* \in K$ solves \eqref{eq:sobjutil-ces}, then it is a Slater point for \eqref{eq:ineq}, i.e. $f(x^*) < a$.
%\end{proposition}
%\begin{proof}
%    We note that $u_{CES}$ is a barrier. Indeed, we can set $\overline{u}=-\infty$ in Definition \ref{def:barrier}. It just remains to apply Proposition \ref{prop:barrier}.
%\end{proof}

Additionally, the unique solvability of \eqref{eq:sobjutil-ces} can be derived by the application of Proposition \ref{prop: h unique1} together with Propositions \ref{prop: ces mono1}, \ref{prop: ces conc1}, and \ref{prop:sol-ces}.

\begin{proposition}
 If $\alpha_j > 0$ for all $j \in \{1,\ldots,m\}$ and $\kappa \in (0,1)$, then for the solutions $\widehat x\in K$ and $\widetilde x\in K$ of \eqref{eq:sobjutil-ces} it holds $f(\widehat x)=f(\widetilde x)$.
 \end{proposition}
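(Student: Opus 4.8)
The plan is to deduce the claim directly from Proposition~\ref{prop: h unique1}, which already converts ``monotone $+$ strictly concave where the solutions live'' into the conclusion $f(\widehat x)=f(\widetilde x)$; all the real work has been done in Propositions~\ref{prop: ces mono1}, \ref{prop: ces conc1}, and \ref{prop:sol-ces}, and what remains is to assemble these. Note first that \eqref{eq:sobjutil-ces} is solvable by Proposition~\ref{prop:sol-ces}(i), so the statement is not vacuous. I would then split the argument according to the sign of $\rho$, since the concavity behaviour of $u_\text{CES}$ recorded in Proposition~\ref{prop: ces conc1} differs in the two regimes. Suppose first $\rho\in(0,1)$. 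Then Proposition~\ref{prop: ces mono1}(i) gives that $u_\text{CES}$ is monotone on its domain $\R^m_+$, and Proposition~\ref{prop: ces conc1}(ii) — applicable because $\alpha_j>0$ for all $j$, $\kappa\in(0,1)$, and $\rho\in(0,1)$ — gives that $u_\text{CES}$ is strictly concave on all of $\R^m_+=\text{dom }u_\text{CES}$. Hence Proposition~\ref{prop: h unique1}(i), applied with $u=u_\text{CES}$ and $h=h_\text{CES}$, yields $f(\widehat x)=f(\widetilde x)$ for any two solutions $\widehat x,\widetilde x$ of \eqref{eq:sobjutil-ces}; no feasibility input beyond Proposition~\ref{prop:feas} is needed here.

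Next suppose $\rho<0$. Now Proposition~\ref{prop: ces conc1}(ii) is unavailable, but Proposition~\ref{prop: ces conc1}(iii) gives that $u_\text{CES}$ is strictly concave on $\text{int dom }u_\text{CES}=\R^m_{++}$, while Proposition~\ref{prop: ces mono1}(i) still provides monotonicity on the domain. To reach part~(ii) of Proposition~\ref{prop: h unique1} I would then invoke Proposition~\ref{prop:sol-ces}(iii): since $\alpha_j>0$ for all $j$ and $\rho<0$ — so that $u_\text{CES}$ is a barrier by Proposition~\ref{prop:list-ces}(iv), and under the Slater condition for \eqref{eq:ineq} as used there — every solution of \eqref{eq:sobjutil-ces} is a Slater point for \eqref{eq:ineq}, i.e.\ $a-f(\widehat x),\,a-f(\widetilde x)\in\R^m_{++}$. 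Proposition~\ref{prop: h unique1}(ii) then delivers $f(\widehat x)=f(\widetilde x)$, which completes this case.

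The one genuinely delicate point — the step I expect to need care — is the boundary value $\rho=1$: there $u_\text{CES}(y)=\big(\sum_{j=1}^m\alpha_j y_j\big)^{\kappa}$ is the $\kappa$-th power of a linear form, hence concave but not strictly concave (its Hessian is of rank one), so Proposition~\ref{prop: ces conc1} offers nothing and Proposition~\ref{prop: h unique1} does not apply; in fact in this regime \eqref{eq:sobjutil-ces} reduces to a weighted-sum scalarization and the conclusion can genuinely fail. I would therefore either read the statement as tacitly restricted to the two regimes $\rho\in(0,1)$ and $\rho<0$ covered by Proposition~\ref{prop: ces conc1}, or add an explicit hypothesis $\rho\neq1$. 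Apart from this case distinction the proof is purely a bookkeeping exercise combining the four cited propositions.
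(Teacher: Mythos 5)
Your proposal is correct and is essentially the paper's own argument: the paper proves this proposition with a one-line remark that it follows "by the application of Proposition \ref{prop: h unique1} together with Propositions \ref{prop: ces mono1}, \ref{prop: ces conc1}, and \ref{prop:sol-ces}," and your case split on $\rho$ (using Proposition \ref{prop: h unique1}(i) for $\rho\in(0,1)$ and Proposition \ref{prop: h unique1}(ii) plus the Slater-point property from Proposition \ref{prop:sol-ces}(iii) for $\rho<0$) is exactly the intended bookkeeping. Your observation about $\rho=1$ is a fair catch of something the paper glosses over: there $u_\text{CES}$ is a concave power of a linear form, Proposition \ref{prop: ces conc1} gives no strict concavity, and the stated conclusion should indeed be read as restricted to the regimes $\rho\in(0,1)$ and $\rho<0$.
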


By applying Theorem \ref{th:strict} together with Proposition \ref{prop: ces mono1}, we obtain (weak) Pareto optima of the original multi-objective problem \eqref{eq:mobj} through the single-objective problem \eqref{eq:sobjutil-ces}.

\begin{proposition}
Let $x^*\in K$ solve \eqref{eq:sobjutil-ces}. The following statements hold: 
        \begin{itemize}
            \item[(i)] if $\exists \overline{\jmath}\in\{1,\ldots,m\}:\alpha_{\overline{\jmath}} > 0$, then $x^* \in W$;
            \item[(ii)] if $\alpha_j > 0$ for all $j\in\{1,\ldots,m\}$, then $x^* \in P$.
        \end{itemize}
\end{proposition}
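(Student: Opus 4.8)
The plan is to derive both assertions as immediate specializations of Theorem \ref{th:strict}, exactly as in the Cobb-Douglas and Leontief cases treated above. First I would record that, by Proposition \ref{prop:list-ces}(i)--(ii), the CES utility function satisfies $\text{dom }u_\text{CES}=\R^m_+$ and is continuous on its domain, so it is an admissible utility function for the framework of Section \ref{sec:utilframework}; consequently the scalarized problem \eqref{eq:sobjutil-ces} is precisely \eqref{eq:sobjutil1} with $u=u_\text{CES}$, and any $x^*\in K$ solving \eqref{eq:sobjutil-ces} solves \eqref{eq:sobjutil1}. It then remains only to verify, for $u_\text{CES}$, the monotonicity hypotheses required by the relevant parts of Theorem \ref{th:strict}, and these are exactly the statements collected in Proposition \ref{prop: ces mono1}.

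Concretely, for part (i) I would argue as follows: under the assumption that $\alpha_{\overline{\jmath}}>0$ for some $\overline{\jmath}$, Proposition \ref{prop: ces mono1}(ii) gives that $u_\text{CES}$ is weakly strictly monotone on its domain, and Theorem \ref{th:strict}(i) then yields $x^*\in W$. For part (ii), under $\alpha_j>0$ for all $j$, Proposition \ref{prop: ces mono1}(iii) gives that $u_\text{CES}$ is strictly monotone on its domain, and Theorem \ref{th:strict}(ii) then yields $x^*\in P$.

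I do not foresee a genuine obstacle here, since all the analytic content is already packaged in Theorem \ref{th:strict} and Proposition \ref{prop: ces mono1}; the only point calling for care is the case $\rho<0$ in part (ii), where the aggregator $\sum_{j=1}^m\alpha_j y_j^\rho$ equals $+\infty$ on $\text{bd }\R^m_+$, so that strict monotonicity must be read in light of the convention $0^\rho=+\infty$ and is delicate on the boundary. To sidestep this entirely I would, in that subcase, route the argument through Theorem \ref{th:strict}(iii): since $u_\text{CES}$ is a barrier (Proposition \ref{prop:list-ces}(iv)), Proposition \ref{prop:sol-ces}(iii) ensures that $x^*$ is a Slater point for \eqref{eq:ineq}, and then strict monotonicity of $u_\text{CES}$ on $\text{int dom }u_\text{CES}=\R^m_{++}$ together with Theorem \ref{th:strict}(iii) gives $x^*\in P$ without ever touching $\text{bd }\R^m_+$. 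Either way, the two assertions follow.
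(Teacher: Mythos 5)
Your proposal is correct and follows exactly the paper's route: the proposition is stated there as an immediate application of Theorem \ref{th:strict} together with Proposition \ref{prop: ces mono1}, with no further argument given. Your additional precaution for the subcase $\rho<0$ in part (ii) — routing through Theorem \ref{th:strict}(iii) via the barrier and Slater-point properties (Propositions \ref{prop:list-ces}(iv) and \ref{prop:sol-ces}(iii)) rather than relying on strict monotonicity up to the boundary, where the convention $0^{\rho}=+\infty$ makes the claim of Proposition \ref{prop: ces mono1}(iii) delicate — is a sound refinement that the paper itself does not spell out.
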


It turns out that (weak) Pareto optima of \eqref{eq:mobj}, which happen to be feasible or Slater points for \eqref{eq:ineq}, can be found by solving \eqref{eq:sobjutil-ces} if the parameters $\alpha_j$'s are properly adjusted. Let us treat the cases $\rho <0$ and $\rho \in (0,1]$ separately.

\begin{theorem}
 \label{thm:w-ces-neg}
  The following statements hold for $\rho <0$:
  \begin{itemize}
      \item[(i)] if $\widehat{x} \in W$ is feasible for \eqref{eq:ineq} satisfying the Slater condition, then there exist ${\alpha}_j \geq 0$, $j=1,\ldots,m$ with $\exists\overline{\jmath}\in \{1,\ldots,m\}: \alpha_{\overline{\jmath}}>0$, such that $\widehat{x}$ belongs to the solution set of \eqref{eq:sobjutil-ces};
      \item[(ii)] if $\widehat{x} \in P$ is a Slater point for \eqref{eq:ineq}, then there exist ${\alpha}_j > 0$, $j=1,\ldots,m$, such that $\widehat{x}$ belongs to the solution set of \eqref{eq:sobjutil-ces}.
  \end{itemize}  
\end{theorem}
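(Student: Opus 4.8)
The plan is to mirror the proof of Theorem~\ref{thm:w-cd}, replacing the logarithm used there for the Cobb--Douglas case by the power transformation $t\mapsto t^{\rho}$, which plays the analogous role when $\rho<0$. Concretely, I would introduce, for $j=1,\dots,m$, the auxiliary functions
\[
\varphi_j(x)\triangleq\begin{cases}\bigl(a_j-f_j(x)\bigr)^{\rho}&\text{if }f_j(x)<a_j,\\ +\infty&\text{otherwise},\end{cases}
\]
with the paper's convention $0^{\rho}=+\infty$. Since $\rho<0$, the map $t\mapsto t^{\rho}$ is convex and non-increasing on $(0,\infty)$, while $a_j-f_j(\cdot)$ is concave; hence, arguing exactly as in Proposition~\ref{prop: h concave1}, each $\varphi_j$ is convex (and lower semicontinuous) on $K$. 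Moreover $s\mapsto(a_j-s)^{\rho}$ is strictly increasing for $s<a_j$, which gives the monotone bookkeeping used below: whenever $\varphi_j(\widehat x)>\varphi_j(x)$ with $\varphi_j(x)<+\infty$, one has $f_j(\widehat x)>f_j(x)$ (the subcase $f_j(\widehat x)=a_j$ being immediate).

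The argument then runs in three steps, structurally identical for (i) and (ii). \emph{First}, via the monotone correspondence above and exactly as in Theorem~\ref{thm:w-cd}, I would show that $\widehat x\in W$ (case (i)) is weak Pareto optimal, and that $\widehat x\in P$ together with being a Slater point (case (ii)) is Pareto optimal, for the auxiliary convex multi-objective problem $\min_{x\in K}\varphi(x)$ with $\varphi=(\varphi_1,\dots,\varphi_m)^{\mathsf T}$. \emph{Second}, I would invoke the linear-scalarization result for convex multi-objective problems, \cite[Theorem~4.1]{ehrgott2005multicriteria}: in case (i) it yields multipliers $\alpha_j\ge0$, not all zero, with $\widehat x$ minimizing $\sum_j\alpha_j\varphi_j$ over $K$, and since this objective is finite at a Slater point, Proposition~\ref{prop:slater-rest} forces $\alpha_j\neq0\Rightarrow f_j(\widehat x)<a_j$; in case (ii) it yields the same conclusion with $\alpha_j>0$ for all $j$, and $f_j(\widehat x)<a_j$ for all $j$ then holds automatically since $\widehat x$ is a Slater point. \emph{Third}, I would transfer this back to \eqref{eq:sobjutil-ces}. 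If $\widehat x$ did not solve \eqref{eq:sobjutil-ces}, there would be $\widetilde x\in K$ with $h_\text{CES}(\widetilde x)>h_\text{CES}(\widehat x)$; then $\widetilde x$ is feasible for \eqref{eq:ineq}, and because $\sum_j\alpha_j(a_j-f_j(\widehat x))^{\rho}$ is a positive finite number, $h_\text{CES}(\widehat x)>0$, so $h_\text{CES}(\widetilde x)>0$, so $\sum_j\alpha_j(a_j-f_j(\widetilde x))^{\rho}$ is positive and finite and $f_j(\widetilde x)<a_j$ for every $j$ with $\alpha_j\neq0$. Applying the strictly decreasing map $z\mapsto z^{\rho/\kappa}$ (recall $\rho/\kappa<0$) to $h_\text{CES}(\widetilde x)>h_\text{CES}(\widehat x)$ reverses the inequality to $\sum_j\alpha_j\varphi_j(\widetilde x)<\sum_j\alpha_j\varphi_j(\widehat x)$, contradicting the minimality from the second step. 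Hence $\widehat x$ solves \eqref{eq:sobjutil-ces}, with $\alpha_j>0$ for all $j$ in case (ii) and not all $\alpha_j$ vanishing in case (i).

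The convexity of the $\varphi_j$ and the bookkeeping with the conventions ($0^{\rho}=+\infty$, $0\cdot(+\infty)=0$, feasibility/boundary values) are routine. The step I expect to be the main obstacle is the multiplier extraction for part~(ii): producing \emph{strictly positive} multipliers at a Pareto optimal point is genuinely more delicate than the weak-Pareto case, because convexity alone does not guarantee that a Pareto optimal point minimizes a positively-weighted sum. One must exploit that $\widehat x$ is a Slater point --- so all the constraints $f_j\le a_j$ are inactive at $\widehat x$ --- together with the barrier behaviour $\varphi_j(x)\to+\infty$ as $f_j(x)\to a_j^-$, which is what constrains the trade-off rates at $\widehat x$ and permits the supporting multipliers to be chosen componentwise positive; since the $f_j$ are only convex, this has to be argued through the subdifferentials of the $\varphi_j$ rather than gradients, and I would want to check carefully that the Slater-point hypothesis is exactly what closes this gap. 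Part~(i) is comparatively mild, using only $\widehat x\in W$ and the finiteness of the scalarized objective at the Slater point.
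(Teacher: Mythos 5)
Your proposal reproduces the paper's own proof essentially verbatim: the same auxiliary functions $\varphi_j(x)=(a_j-f_j(x))^{\rho}$ (set to $+\infty$ where $f_j(x)\ge a_j$), the same reduction of $\widehat x\in W$ (resp.\ $P$) to (weak) Pareto optimality of the auxiliary convex problem $\min_{x\in K}\varphi(x)$, the same appeal to \cite[Theorem 4.1]{ehrgott2005multicriteria} combined with finiteness of the scalarized objective at a Slater point to obtain $\alpha_j\neq 0\Rightarrow f_j(\widehat x)<a_j$, and the same final contradiction via the strictly decreasing power map relating $h_{\text{CES}}$ to $\sum_j\alpha_j\varphi_j$. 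The one step you flag as a potential obstacle --- extracting strictly positive multipliers in part (ii) --- receives no additional argument in the paper either: it disposes of (ii) with ``analogous to (i)'', leaning entirely on the cited scalarization theorem to supply $\alpha_j>0$ at Pareto points of the convex auxiliary problem, so your instinct to scrutinize that citation is fair but does not correspond to any extra machinery you are missing relative to the paper.
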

\begin{proof} (i)
 Preliminary, consider for $j=1,\ldots,m$:
    \[
    \varphi_j(x)\triangleq \left\{\begin{array}{cl}
(a_j-f_j(x))^{\rho} & \text{if} \; f_j(x) < a_j\\
+\infty & \text{otherwise}.
\end{array}\right.	 
    \] 
     We show that $\widehat x$ is weak Pareto optimal for 
    \begin{equation}\label{eq:mobj phi1-ces}
    \begin{array}{cl}
    \underset{x}{\mbox{minimize}} & \varphi(x) \triangleq \big(\varphi_1(x), \ldots, \varphi_m(x)\big)^\mathsf{T}\\
    \mbox{s.t.} & x \in K.
    \end{array}
    \end{equation}
    Otherwise, there would exit $x \in K$, such that for every $j=1,\ldots,m$ it holds 
    \begin{equation}\label{eq:contra-ces}
       \varphi_{j}(\widehat x) > \varphi_{j}(x).    
    \end{equation}
     In particular, it follows $f_j(x) < a_j$.
    In case $f_j(\widehat x) = a_j$, we immediately deduce $f_j(\widehat x) > f_j(x)$.
    If $f_j(\widehat x) < a_j$, \eqref{eq:contra-ces} reads as
\[
(a_j-f_j(\widehat x))^{\rho} > (a_j-f_j(x))^{\rho}.
\]
Hence, again $f_j(\widehat x) > f_j(x)$ follows. Altogether, $\widehat x$ is not weak Pareto optimal for \eqref{eq:mobj}, a contradiction. Further, in view of decreasing monotonicity and convexity of the function $z^{\rho}$ for $\rho <0$, reasoning similarly to the proof of Proposition \ref{prop: h concave1}, $\varphi_j$ turns out to be convex for every $j=1,\ldots,m$. By applying \cite[Theorem 4.1]{ehrgott2005multicriteria} to \eqref{eq:mobj phi1-ces}, there exist $\alpha_j \geq 0$, $j=1,\ldots,m$ with $\exists \overline{\jmath}\in\{1,\ldots,m\}:\alpha_{\overline{\jmath}} > 0$, such that $\widehat{x}$ belongs to the solution set of the following problem:
\begin{equation} \label{eq: log ces1}
    \begin{array}{cl}
    \underset{x}{\mbox{minimize}} & \displaystyle\sum_{j=1}^m \alpha_j \varphi_j(x)\\
    \mbox{s.t.} & x \in K.\\
    \end{array}
    \end{equation}
Evaluated at a Slater point $\overline{x} \in K$, the objective function in \eqref{eq: log ces1} is obviously finite. Hence, for every $j \in \{1,\ldots,m\}$ it holds, cf. Proposition \ref{prop:slater-rest},
  \begin{equation}\label{eq:cond-ces}
   \alpha_j \neq 0 \Rightarrow f_j(\widehat x) < a_j.
\end{equation}
Now, we show that $\widehat x$ belongs also to the solution set of \eqref{eq:sobjutil-cd}. In fact, if this were not the case, there would exist $\widetilde x \in K$ such that
\[
  h_\text{CES} (\widetilde x) > h_\text{CES} (\widehat x). 
\]
In particular, $\widetilde x$ must be feasible for \eqref{eq:ineq}. Since $\widehat x$ is feasible for \eqref{eq:ineq} by assumption, the latter inequality becomes
    \begin{equation}\label{eq:in-ces}
\left(\sum_{j=1}^m \alpha_j(a_j - f_j(\widetilde x))^{\rho} \right)^{\frac{\kappa}{\rho}}
> \left(\sum_{j=1}^m \alpha_j(a_j - f_j(\widehat x))^{\rho} \right)^{\frac{\kappa}{\rho}}.
    \end{equation}
     Because of \eqref{eq:cond-ces} and not all $\alpha_j$'s vanishing, we additionally have
    \[
       \left(\sum_{j=1}^m \alpha_j(a_j - f_j(\widehat x))^{\rho} \right)^{\frac{\kappa}{\rho}} >0.
    \]
    Hence, we deduce
      \begin{equation}\label{eq:cond-ces1}
   \alpha_j \neq 0 \Rightarrow f_j(\widetilde x) < a_j.
\end{equation}
    Altogether, due to the strict decreasing monotonicity of the function $z^{\frac{\kappa}{\rho}}$ for $\rho < 0$, we obtain in view of \eqref{eq:cond-ces} and \eqref{eq:cond-ces1}
    \[
    \sum_{j=1}^m \alpha_j(a_j - f_j(\widetilde x))^{\rho} 
< \sum_{j=1}^m \alpha_j(a_j - f_j(\widehat x))^{\rho}.
    \]
    This provides a contradiction to the fact that $\widehat x$ solves \eqref{eq: log ces1}.

    (ii) The proof is analogous to (i).
\end{proof}

\begin{theorem}
 \label{thm:w-ces-pos}
  The following statements hold for $\rho \in (0,1]$:
  \begin{itemize}
      \item[(i)] if $\widehat{x} \in P$ is feasible for \eqref{eq:ineq}, then there exist ${\alpha}_j > 0$, $j=1,\ldots,m$, such that $\widehat{x}$ belongs to the solution set of \eqref{eq:sobjutil-ces};
      \item[(ii)] if $\widehat{x} \in W$ is Slater for \eqref{eq:ineq}, then there exist ${\alpha}_j \geq 0$, $j=1,\ldots,m$ with $\exists\overline{\jmath}\in \{1,\ldots,m\}: \alpha_{\overline{\jmath}}>0$, such that $\widehat{x}$ belongs to the solution set of \eqref{eq:sobjutil-ces}.
  \end{itemize}  
\end{theorem}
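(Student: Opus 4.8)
The plan is to follow the same route as Theorems~\ref{thm:w-cd} and~\ref{thm:w-ces-neg}: reduce the CES scalarization to a weighted-sum scalarization of an auxiliary convex multi-objective problem and invoke the converse scalarization result \cite[Theorem~4.1]{ehrgott2005multicriteria}. Since $\kappa/\rho>0$, the map $t\mapsto t^{\kappa/\rho}$ is a strictly increasing bijection of $[0,\infty)$, so a point feasible for \eqref{eq:ineq} lies in the solution set of \eqref{eq:sobjutil-ces} if and only if it maximizes $\sum_{j=1}^m\alpha_j(a_j-f_j(x))^{\rho}$ over the compact convex set $\widetilde K\triangleq\{x\in K:f(x)\le a\}$, i.e.\ if and only if it minimizes $\sum_{j=1}^m\alpha_j\varphi_j(x)$ over $\widetilde K$, where
\[
\varphi_j(x)\triangleq\left\{\begin{array}{cl}-\big(a_j-f_j(x)\big)^{\rho}&\text{if }f_j(x)\le a_j\\ +\infty&\text{otherwise}.\end{array}\right.
\]
Thus it suffices to exhibit weights $\alpha_j$ of the required sign for which $\widehat x$ minimizes $\sum_j\alpha_j\varphi_j$ over $\widetilde K$, keeping in mind that $\widehat x$ is feasible for \eqref{eq:ineq} in both cases.

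First I would check that each $\varphi_j$ is convex on $\widetilde K$: for $\rho\in(0,1]$ the function $t\mapsto t^{\rho}$ is increasing and concave on $[0,\infty)$, so $x\mapsto(a_j-f_j(x))^{\rho}$ is concave (increasing-concave composed with the concave $a_j-f_j$), arguing as in the proof of Proposition~\ref{prop: h concave1}. Next I would show that $\widehat x$ is Pareto optimal (in case (i)) resp.\ weak Pareto optimal (in case (ii)) for the auxiliary problem $\min_{x\in\widetilde K}\varphi(x)$: a point $x\in\widetilde K$ with $\varphi_j(x)\le\varphi_j(\widehat x)$ forces, via the strict monotonicity of $t\mapsto t^{\rho}$, that $f_j(x)\le f_j(\widehat x)$, and likewise for strict inequalities, so a point dominating $\widehat x$ in $\varphi$ would contradict $\widehat x\in P$ resp.\ $\widehat x\in W$. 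Applying \cite[Theorem~4.1]{ehrgott2005multicriteria} to this convex problem then produces the weights: $\alpha_j>0$ for all $j$ in case (i) from Pareto optimality, and $\alpha_j\ge0$ with some $\alpha_{\overline\jmath}>0$ in case (ii) from weak Pareto optimality, with $\widehat x$ minimizing $\sum_j\alpha_j\varphi_j$ over $\widetilde K$. Finally I would transfer this back: if $\widehat x$ did not solve \eqref{eq:sobjutil-ces}, some $\widetilde x\in K$ would satisfy $h_\text{CES}(\widetilde x)>h_\text{CES}(\widehat x)\ge0$; finiteness of the right-hand side forces $\widetilde x\in\widetilde K$, and applying $t\mapsto t^{\rho/\kappa}$ yields $\sum_j\alpha_j\varphi_j(\widetilde x)<\sum_j\alpha_j\varphi_j(\widehat x)$, a contradiction. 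In case (ii) the Slater point $\overline x$ guarantees, exactly as in the proof of Theorem~\ref{thm:w-ces-neg}(i) and via Proposition~\ref{prop:slater-rest}, that the objective values stay finite along the argument; apart from that, case (ii) is entirely parallel to Theorem~\ref{thm:w-ces-neg}(i) and should go through routinely.

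I expect the delicate step to be the strictly-positive-weights conclusion in case (i). The converse of weighted-sum scalarization only returns $\alpha\ge0$, $\alpha\ne0$ from weak Pareto optimality; to obtain $\alpha_j>0$ one needs $\widehat x$ to be \emph{properly} Pareto optimal (bounded trade-offs) for the auxiliary problem, which must be extracted from $\widehat x\in P$ together with the feasibility of $\widehat x$ for \eqref{eq:ineq}. Compactness of $\widetilde K$ helps, but when $\rho<1$ the map $t\mapsto t^{\rho}$ fails to be locally Lipschitz at $t=0$, so the behaviour of the transformed objectives near the boundary $\{f_j=a_j\}$ is what requires care; the argument is cleanest at points interior to the reference-point constraints, which is precisely where the Slater structure and Proposition~\ref{prop:slater-rest} do the work, mirroring their role in Theorems~\ref{thm:w-cd} and~\ref{thm:w-ces-neg}.
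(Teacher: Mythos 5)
Your proposal follows essentially the same route as the paper's own proof: it introduces the auxiliary objectives $\varphi_j(x)=-(a_j-f_j(x))^{\rho}$ (set to $+\infty$ outside the reference-point constraints), verifies their convexity via the increasing concavity of $t\mapsto t^{\rho}$, shows that $\widehat x$ is (weak) Pareto optimal for the auxiliary problem, invokes \cite[Theorem 4.1]{ehrgott2005multicriteria} to obtain the weights, and transfers optimality back through the strictly increasing map $t\mapsto t^{\kappa/\rho}$. The ``delicate step'' you flag in case (i) -- that the converse weighted-sum theorem delivers strictly positive weights only under proper efficiency -- is not resolved differently in the paper either: the paper simply cites \cite[Theorem 4.1]{ehrgott2005multicriteria} as yielding $\alpha_j>0$ for Pareto optimal points of the convex auxiliary problem, so your reservation is a fair comment on the cited result rather than a divergence from the paper's argument.
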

\begin{proof}
  (i)  Preliminary, consider for $j=1,\ldots,m$:
    \[
    \varphi_j(x)\triangleq \left\{\begin{array}{cl}
-(a_j-f_j(x))^{\rho} & \text{if} \; f_j(x) \leq a_j\\
+\infty & \text{otherwise}.
\end{array}\right.	 
    \] 
     We show that $\widehat x$ is Pareto optimal for 
    \begin{equation}\label{eq:mobj phi1-ces-pos}
    \begin{array}{cl}
    \underset{x}{\mbox{minimize}} & \varphi(x) \triangleq \big(\varphi_1(x), \ldots, \varphi_m(x)\big)^\mathsf{T}\\
    \mbox{s.t.} & x \in K.
    \end{array}
    \end{equation}
    Otherwise, there would exit $x \in K$, such that
    \begin{equation}\label{eq:contra-ces-pos}
        \forall j \in \{1,\ldots,m\} \, : \, \varphi_{j}(\widehat x) \geq \varphi_{j}(x), \, \text{and} \; \;\exists j_x \in \{1,\ldots,m\}: \; \varphi_{j_x}(\widehat x) > \varphi_{j_x}(x).
    \end{equation}
    In particular, the feasibility of $x$ for \eqref{eq:ineq} follows from that of $\widehat x$.
    Hence, \eqref{eq:contra-ces-pos} implies directly
\[
        \forall j \in \{1,\ldots,m\} \, : \, f_{j}(\widehat x) \geq f_{j}(x), \, \text{and} \; \;\exists j_x \in \{1,\ldots,m\}: \; f_{j_x}(\widehat x) > f_{j_x}(x),
\]
i.e. $\widehat x$ is not Pareto optimal for \eqref{eq:mobj}, a contradiction. Further, in view of increasing monotonicity and concavity of the function $z^{\rho}$ for $\rho \in (0,1]$, reasoning similarly to the proof of Proposition \ref{prop: h concave1}, $\varphi_j$ turns out to be convex for every $j=1,\ldots,m$. By applying \cite[Theorem 4.1]{ehrgott2005multicriteria} to \eqref{eq:mobj phi1-ces-pos}, there exist $\alpha_j > 0$, $j=1,\ldots,m$, such that $\widehat{x}$ belongs to the solution set of the following problem:
\begin{equation} \label{eq: log ces1-pos}
    \begin{array}{cl}
    \underset{x}{\mbox{minimize}} & \displaystyle\sum_{j=1}^m \alpha_j \varphi_j(x)\\
    \mbox{s.t.} & x \in K.\\
    \end{array}
    \end{equation}
Now, we show that $\widehat x$ belongs also to the solution set of \eqref{eq:sobjutil-ces}. In fact, if this were not the case, there would exist $\widetilde x \in K$ such that
\[
  h_\text{CES} (\widetilde x) > h_\text{CES} (\widehat x). 
\]
Again, the feasibility of $\widetilde x$ for \eqref{eq:ineq} follows from that of $\widehat x$. The latter inequality becomes then 
    \[
    \left(\displaystyle \sum_{j=1}^m \alpha_j(a_j - f_j(\widetilde x))^{\rho} \right)^{\frac{\kappa}{\rho}} > \left(\displaystyle \sum_{j=1}^m \alpha_j(a_j - f_j(\widehat x))^{\rho} \right)^{\frac{\kappa}{\rho}}.
    \]
   Due to the strict increasing monotonicity of the function $z^{\frac{\kappa}{\rho}}$ for $\rho \in (0,1]$, it follows
    \[
      \displaystyle -\sum_{j=1}^m \alpha_j(a_j - f_j(\widetilde x))^{\rho} < \displaystyle -\sum_{j=1}^m \alpha_j(a_j - f_j(\widehat x))^{\rho}.
    \]
    This provides a contradiction to the fact that $\widehat x$ solves \eqref{eq: log ces1-pos}.

     (ii) The proof is analogous to (i).
\end{proof}

Finally, we link the proposed scalarization approach  
via the CES utility function to the compromise programming, see e.g. \cite{zeleny1973compromise}. 

\begin{remark}
\label{rem:compr}
In compromise programming, one tries to solve the multi-objective optimization problem:
\begin{equation}\label{eq:cp}
\begin{array}{cl}
\underset{x}{\mbox{minimize}} & g(x) \triangleq \big(g_1(x), \ldots, g_m(x)\big)^\mathsf{T}\\
\mbox{s.t.} & x \in K,
\end{array}
\end{equation}
by minimizing the distance between $g(x)$ and some reference point $b \in \R^m$
\begin{equation}\label{eq:cp-dist}
\begin{array}{cl}    
\underset{x}{\mbox{minimize}} & \displaystyle \text{dist}(g(x),b)\\
    \mbox{s.t.} & x \in K.\\
    \end{array}
\end{equation}
Here, the reference point satisfies $b \leq I$, where the ideal point $I$ of \eqref{eq:cp} is defined as
\[
    I_j \triangleq \min_{x \in K} g_j(x), j=1, \ldots,m.
\]
Usually, the distance in \eqref{eq:cp-dist} is taken to be generated by the weighted $p$-norm
\[
    \text{dist}(g(x),b) = \displaystyle \left(\sum_{j=1}^m \alpha_j \, (g_j(x)-b_j)^{p}\right)^{\frac{1}{p}}
\]
with the parameters $\alpha_j \geq 0$, $j=1,\ldots,m$, and $p \geq 1$. We want to examine to which extent the single-objective optimization problem \eqref{eq:sobjutil-ces} with the CES utility function can be cast into the framework of the compromise programming \eqref{eq:cp-dist} with the $p$-norm. For that, we set
\[
   g_j(x) = \frac{1}{a_j-f_j(x)}, j=1, \ldots,m,
\]
where we focus on the Slater points of \eqref{eq:ineq}, i.e. fulfilling $f_j(x) < a_j$, $j = 1,\ldots,m$. Note that the functions $g_j$'s remain convex under our general assumption on the convexity of $f_j$'s. For the corresponding ideal point of \eqref{eq:cp} we have
\[
   I_j = \frac{1}{\displaystyle a_j-\min_{ j=1,\ldots,m} f_j(x)} > 0, j=1, \ldots,m.
\]
Hence, an admissible choice for the reference point $b$ is the origin. Altogether, the compromise programming with respect to the $p$-norm is
\[
\begin{array}{cl}    
\underset{x}{\mbox{minimize}} & \displaystyle\displaystyle \left(\sum_{j=1}^m \alpha_j \, \left(\frac{1}{a_j-f_j(x)}\right)^{p}\right)^{\frac{1}{p}}\\
    \mbox{s.t.} & x \in K.\\
    \end{array}
\]
By redefining the parameters with $\rho = -p$, we equivalently obtain
\[
\begin{array}{cl}    
\underset{x}{\mbox{maximize}} & \displaystyle\displaystyle \left(\sum_{j=1}^m \alpha_j \,( a_j-f_j(x))^{\rho}\right)^{\frac{1}{\rho}}\\
    \mbox{s.t.} & x \in K.\\
    \end{array}
\]
The latter coincides with \eqref{eq:sobjutil-ces} if choosing $\kappa=1$. However, we note that the compromise programming covers just the case $\rho \leq -1$, whereas our approach enlarges the parameter space to $\rho \leq 1$ with $\rho \neq 0$. In other words, the proposed scalarization via the CES utility function goes beyond the $p$-norm for measuring distance in the compromise programming.
\end{remark}

\section{Algorithmic developments}
\label{sec:algo}

In the developments of this section, we assume $f_j$, $j=1,\ldots,m$, to be continuously differentiable, and $u$ to be continuously differentiable on the interior of its domain. These conditions make $h$ continuously differentiable on the open set of Slater points for \eqref{eq:ineq}.

The following generalization of Proposition \ref{prop: h concave1} to encompass pseudoconcave utility functions is instrumental for algorithmic purposes. The analysis reveals some subtleties and conditions (i) and (ii) in Definition \ref{def:barrier} turn out to play a role.    
\begin{theorem} \label{th: h concave}
Let $u$ be a barrier, monotone on its domain and pseudoconcave on the interior of its domain. Any Slater point $x^* \in K$ for \eqref{eq:ineq} such that 
    \begin{equation} \label{eq: stationary}
    \nabla h(x^*)^\mathsf{T}(x-x^*)\leq 0, \; \forall x \in K,
    \end{equation}
    belongs to the solution set of \eqref{eq:sobjutil1}.
\end{theorem}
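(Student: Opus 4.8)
The plan is to argue that the variational inequality \eqref{eq: stationary}, combined with pseudoconcavity, forces $x^*$ to be a global maximizer of $h$ over the open convex set $S$ of Slater points, and then to upgrade this to a global maximizer over all of $K$ using the barrier property to rule out improvements on the boundary. First I would recall that, by assumption, $h$ is continuously differentiable on the open set $S = \{x \in K : f_j(x) < a_j,\ j=1,\ldots,m\}$ of Slater points, and that by Proposition \ref{prop: h concave1}(ii) $h$ is pseudoconcave on $S$. The definition of pseudoconcavity says that for $\widehat x,\widetilde x \in S$, if $\nabla h(\widehat x)^\mathsf{T}(\widetilde x - \widehat x) \le 0$ then $h(\widetilde x) \le h(\widehat x)$. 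Applying this with $\widehat x = x^*$ and using \eqref{eq: stationary}, we get $h(x) \le h(x^*)$ for every $x \in S$. (A small technical point: \eqref{eq: stationary} quantifies over all $x \in K$, which is more than enough, since $S \subseteq K$.)

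Next I would handle the points of $K$ that are \emph{not} Slater points, i.e. $x \in K$ with $f_j(x) = a_j$ for at least one index $j$, or with $f_j(x) > a_j$ for some $j$. In the latter case $a - f(x) \notin \R^m_+ = \text{dom}\,u$, so $h(x) = -\infty \le h(x^*)$ trivially. In the former case $a - f(x) \in \text{bd dom}\,u$, so by the barrier property (Definition \ref{def:barrier}(i)) $h(x) = u(a-f(x)) = \overline u$. On the other hand, since $x^*$ is a Slater point, $a - f(x^*) \in \text{int dom}\,u = \R^m_{++}$, and Definition \ref{def:barrier}(ii) gives $h(x^*) = u(a-f(x^*)) > \overline u = h(x)$. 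Hence $h(x) < h(x^*)$ for every boundary (non-Slater, feasible) point $x$, and $h(x) = -\infty$ for infeasible-for-\eqref{eq:ineq} points. Combining the three cases, $h(x) \le h(x^*)$ for all $x \in K$, so $x^*$ solves \eqref{eq:sobjutil1}.

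The step I expect to be the main obstacle is making the pseudoconcavity argument fully rigorous, because pseudoconcavity of $h$ is only available on the open set $S$, not on $K$; one must be careful that \eqref{eq: stationary}, which is a stationarity condition relative to $K$, still yields the needed inequality $\nabla h(x^*)^\mathsf{T}(x-x^*) \le 0$ for the relevant comparison points $x \in S$ — which it does, since $S \subseteq K$. The other delicate point is that, a priori, pseudoconcavity only compares $h$ at pairs of points in $S$, so it does not directly say anything about $\sup_{x\in K} h$; it is precisely the barrier property that bridges this gap by showing the boundary of $\text{dom}\,u$ (hence the non-Slater part of $K$) can never beat an interior point. It is worth noting why conditions (i) and (ii) in Definition \ref{def:barrier} are both needed here: (ii) guarantees $x^*$ strictly beats the constant boundary value $\overline u$, and (i) guarantees that constant value is an \emph{upper} bound on $h$ over the non-Slater feasible points — without (i), a feasible boundary point could conceivably have larger $h$-value and the conclusion would fail.
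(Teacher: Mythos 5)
Your proof is correct and follows essentially the same route as the paper: invoke Proposition \ref{prop: h concave1}(ii) to get pseudoconcavity of $h$ on the open set of Slater points, conclude $h(x^*)\geq h(x)$ there from \eqref{eq: stationary}, and then use parts (i) and (ii) of Definition \ref{def:barrier} to bound $h$ by $\overline u < h(x^*)$ on the non-Slater part of $K$. Your explicit separation of the $h(x)=-\infty$ case and the remark on why both barrier conditions are needed are just slightly more detailed renderings of the paper's argument.
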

\begin{proof}
In view of Proposition \ref{prop: h concave1} (ii), $h$ is pseudoconcave on the set of Slater points for \eqref{eq:ineq}. Thus, for every Slater point $x \in K$ for \eqref{eq:ineq},
\[
\nabla h(x^*)^\mathsf{T}(x-x^*)\leq 0 \implies h(x^*) \geq h(x) > \overline u,
\]
where the last inequality follows from (ii) in Definition \ref{def:barrier}. Moreover, in view of (i) in Definition \ref{def:barrier}, $\overline u \geq h(z)$ for every $z \in K$ such that $a - f(z) \notin \mathbb R^m_{++}$, and the claim follows.
\end{proof}

\noindent We propose a rather general algorithmic scheme to address \eqref{eq:sobjutil1} when relying on barrier utility functions. 
\begin{algorithm}[h]
\caption{General Scheme for \eqref{eq:sobjutil1} \label{alg:one}}
\KwData{$\beta >0$, $\gamma \in (0,1)$}
{Compute $x^0 \in K$ such that $h(x^0) > \overline u$}

\For{$k = 0,\ldots$}
{
Compute a direction $d^k \neq 0$ that is feasible for $K$ at $x^k$ and such that
    \begin{equation} \label{eq: ascent dir}
        \nabla h(x^k)^\mathsf{T}d^k \geq \beta \|d^k\|^2;
    \end{equation}{\label{stepalpha}}\\
Compute a stepsize $\alpha^k>0$ such that $x^k + \alpha^kd^k \in K$ and 
    \begin{equation} \label{eq: func increase}
    {h(x^k + \alpha^k d^k)\geq h(x^k)} + \gamma\alpha^k\nabla h(x^k)^\mathsf{T}d^k;
    \end{equation}\\
Set $x^{k+1} = x^k + \alpha^kd^k$\;
}
\end{algorithm}
By means of Lemma \ref{th: algorithm results}, we state some general properties of Algorithm \ref{alg:one}.
\begin{lemma} \label{th: algorithm results}
    Let $u$ be a barrier and monotone on its domain. Assume that Slater condition is satisfied for \eqref{eq:ineq}. %Assuming \eqref{eq:ineq} to satisfy the Slater condition, $x^0 \in K$ exists such that $h(x^0) > \overline u$. %Consider the sequences generated by Algorithm \ref{alg:one}.
If feasible directions $d^k \neq 0$ for $K$ are available at $x^k$ and satisfy \eqref{eq: ascent dir} for every $k$, the following statements hold for Algorithm \ref{alg:one}:
    \begin{enumerate}[label=(\roman*)]
        \item sequence $\{\alpha^k\}$ is well defined;
        \item $x^k$ is a Slater point for \eqref{eq:ineq}, for every $k$;%$x^k \in K \cap \, \{z \in \mathbb R^n: a - f(z) \in \mathbb R^m_{++}\} \subseteq K \cap \, int \, dom \, h$ for every $k$;
        %\item $x^k \in K \cap \, int \, dom \, h$ for every $k$;
        \item $\displaystyle \lim_{k\to \infty}\alpha^k\|d^k\|^2 = 0$;
        \item whenever $h$ is pseudoconcave and $\nabla h$ is locally Lipschitz on the set of Slater points of \eqref{eq:ineq}, any $\alpha^k \in \left(0, \frac{2(1-\gamma)\beta)}{L}\right]$ satisfies condition \eqref{eq: func increase}, for every $k$. Here, $L$ denotes the Lipschitz constant of $\nabla h$ on the upper level set
        $\mathcal{L}_{x_0}  \triangleq \left\{ x \in K : h(x) \geq h(x_0)\right\}$.
    \end{enumerate}
    In particular, if the direction is computed in a projected gradient fashion, i.e.,
        \begin{equation} \label{eq: direction definition}
        d^k = Proj_{K}(x^k + \tau \nabla h(x^k) )-x^k, \quad \tau >0,
        \end{equation}
    \begin{enumerate}
        \item[(v)]
    \eqref{eq: ascent dir} holds with $\beta = 1/\tau$ and Step \ref{stepalpha} is well-defined, for every $k$.
    \end{enumerate}
\end{lemma}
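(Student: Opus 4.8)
The plan is to establish the five claims essentially in the listed order, leaning on Theorem~\ref{th: h concave} and Proposition~\ref{prop: h concave1} to convert statements about $h$ into statements about Slater points. First, for (ii), I would argue by induction. The base case $k=0$ holds because $h(x^0)>\overline u$ forces $a-f(x^0)\in\R^m_{++}$ by Definition~\ref{def:barrier}(i), i.e.\ $x^0$ is a Slater point. For the inductive step, assume $x^k$ is Slater; then $d^k$ satisfies \eqref{eq: ascent dir}, so $\nabla h(x^k)^\mathsf{T}d^k\geq\beta\|d^k\|^2>0$ since $d^k\neq 0$, meaning $d^k$ is an ascent direction for $h$ at $x^k$. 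Because $h$ is continuously differentiable on the open set of Slater points (the standing assumption of Section~\ref{sec:algo}) and $d^k$ is feasible for the closed convex set $K$, for all small $\alpha>0$ one has $x^k+\alpha d^k\in K$ and, by differentiability and the ascent property, $h(x^k+\alpha d^k)\geq h(x^k)+\gamma\alpha\nabla h(x^k)^\mathsf{T}d^k$; this simultaneously shows $\alpha^k$ is well defined (claim (i)) and that $x^{k+1}$ satisfies $h(x^{k+1})\geq h(x^k)>\overline u$, hence by Definition~\ref{def:barrier} $x^{k+1}$ is again Slater. So (i) and (ii) come together by a single induction; one subtlety to handle carefully is that one must first know $x^k$ is Slater (so that $h$ is differentiable there) \emph{before} invoking \eqref{eq: ascent dir} and the line-search — the induction is what makes this legitimate.

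For (iii), I would observe that $\{h(x^k)\}$ is nondecreasing by \eqref{eq: func increase} and bounded above: indeed all $x^k$ lie in the upper level set $\mathcal{L}_{x^0}=\{x\in K:h(x)\geq h(x^0)\}$, which is contained in $\{x\in K:f(x)\leq a\}$ (as in the proof of Proposition~\ref{prop:exist}) and hence bounded; continuity of $h$ on its (closed) effective domain then gives that $h$ is bounded on $\mathcal{L}_{x^0}$, so $\{h(x^k)\}$ converges. Summing \eqref{eq: func increase} over $k$ and using \eqref{eq: ascent dir},
\[
\sum_{k} \gamma\beta\,\alpha^k\|d^k\|^2 \;\leq\; \sum_k \gamma\alpha^k\nabla h(x^k)^\mathsf{T}d^k \;\leq\; \sum_k \bigl(h(x^{k+1})-h(x^k)\bigr) < \infty,
\]
whence $\alpha^k\|d^k\|^2\to 0$.

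For (iv), on the set of Slater points $h$ is pseudoconcave and $\nabla h$ is locally Lipschitz; restricting to the compact set $\mathcal{L}_{x^0}$ (which, by the argument above, is a subset of the Slater set once we also know $h(x)\geq h(x^0)>\overline u$ there) gives a global Lipschitz constant $L$ for $\nabla h$ on $\mathcal{L}_{x^0}$. The standard descent-lemma estimate along the segment $[x^k,x^k+\alpha d^k]\subseteq\mathcal{L}_{x^0}$ (which stays in $\mathcal{L}_{x^0}$ while $h$ does not drop below $h(x^k)\geq h(x^0)$; a short argument using pseudoconcavity and the ascent property of $d^k$ keeps us there for the relevant range of $\alpha$) yields
\[
h(x^k+\alpha d^k)\;\geq\; h(x^k)+\alpha\nabla h(x^k)^\mathsf{T}d^k-\tfrac{L}{2}\alpha^2\|d^k\|^2,
\]
and this right-hand side is $\geq h(x^k)+\gamma\alpha\nabla h(x^k)^\mathsf{T}d^k$ as soon as $(1-\gamma)\nabla h(x^k)^\mathsf{T}d^k\geq \tfrac{L}{2}\alpha\|d^k\|^2$; using \eqref{eq: ascent dir} it suffices that $\alpha\leq 2(1-\gamma)\beta/L$. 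Finally, for (v): if $d^k=\mathrm{Proj}_K(x^k+\tau\nabla h(x^k))-x^k$, the variational characterization of the projection onto the convex set $K$, namely $\langle x^k+\tau\nabla h(x^k)-\mathrm{Proj}_K(\cdot),\,x-\mathrm{Proj}_K(\cdot)\rangle\leq 0$ for all $x\in K$, applied with $x=x^k$ gives $\tau\nabla h(x^k)^\mathsf{T}d^k\geq\|d^k\|^2$, i.e.\ \eqref{eq: ascent dir} with $\beta=1/\tau$; moreover $d^k$ is feasible for $K$ at $x^k$ since $x^k+d^k=\mathrm{Proj}_K(\cdot)\in K$ and $K$ is convex, and when $d^k\neq 0$ this is a genuine ascent direction, so Step~\ref{stepalpha} produces a valid $d^k$ and the earlier parts apply.

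The main obstacle I anticipate is the bookkeeping in (iv) — specifically, justifying that the whole segment $[x^k,x^k+\alpha^k d^k]$ stays inside $\mathcal{L}_{x^0}$ (equivalently, inside the Slater set, where $h$ is differentiable and $\nabla h$ is Lipschitz) so that the descent lemma may be applied; this needs a careful continuity/pseudoconcavity argument rather than a one-line invocation, and it is the place where Definition~\ref{def:barrier}(i)--(ii), as the theorem statement hints, genuinely enter (points leaving the Slater set would hit the boundary value $\overline u<h(x^0)$, contradicting monotonicity of $\{h(x^k)\}$ along the search). Everything else is either a direct induction or a routine application of the projection inequality and the descent lemma.
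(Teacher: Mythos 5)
Your proposal is correct and follows essentially the same route as the paper's proof: induction for (i)--(ii) using the barrier property to keep iterates Slater, monotonicity and boundedness of $\{h(x^k)\}$ for (iii) (your telescoping-sum variant is equivalent to the paper's limit of differences, and your compactness argument for boundedness is the same one underlying the paper's appeal to Proposition \ref{prop:exist}), the descent lemma on the convex compact level set $\mathcal{L}_{x^0}$ for (iv), and the standard projection inequality for (v). The subtlety you flag in (iv) — keeping the segment inside the Slater set — is resolved in the paper exactly as you suggest, via quasiconcavity (hence convexity of $\mathcal{L}_{x^0}$) together with the barrier property.
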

\begin{proof}
(i) We show that a threshold $\overline{\alpha}^k>0$ exists such that \eqref{eq: func increase} holds for all $\alpha^k \in(0,\overline{\alpha}^k]$ and $x^{k+1} = x^k + \alpha^k d^k \in K$.
    Suppose by contradiction that a sequence $\{\alpha^k_j\}$ exists such that  $\alpha^k_j \downarrow 0$  and
    \[
     \frac{h(x^k + \alpha_j^k d^k)- h(x^k)}{\alpha_j^k} < \gamma\nabla h(x^k)^\mathsf{T}d^k.
    \]
     Taking the limit $j \rightarrow \infty$ in the relation above, we get $\nabla h(x^k)^\mathsf{T}d^k \leq \gamma\nabla h(x^k)^\mathsf{T}d^k$, which is an absurdum since $\gamma <1$ and $\nabla h(x^k)^\mathsf{T}d^k> 0$ by \eqref{eq: ascent dir}.  

    (ii) Preliminarily, we observe that, in view of (ii) in Definition \ref{def:barrier}, any Slater point for \eqref{eq:ineq} can be taken as $x^0$. Assuming that $x^k \in K$ is a Slater point for \eqref{eq:ineq}, conditions \eqref{eq: func increase} and \eqref{eq: ascent dir} yield 
    \begin{equation} \label{eq:ascent of h}
    h(x^{k+1})={h(x^k + \alpha^k d^k)\geq h(x^k)} + \gamma\alpha^k\nabla h(x^k)^\mathsf{T}d^k > h(x^k) > \overline{u}, %\geq h(x^0) > \overline{u}.
    \end{equation}
    where the last inequality follows from (ii) in Definition \ref{def:barrier}. If $x^{k+1}$ is not a Slater point for \eqref{eq:ineq}, then $h(x^{k+1}) \le \overline u$ by (i) in Definition \ref{def:barrier}, a contradiction. Overall, (ii) is true by induction.
  
(iii) In view of \eqref{eq: func increase}, $\{h(x^k)\}$ is a monotone increasing sequence. The solvability of \eqref{eq:sobjutil1} from Proposition \ref{prop:exist} provides that the sequence $\{h(x^k)\}$ is also bounded from above. Hence, there exists $\widehat{h}$ such that $\displaystyle \lim_{k \to \infty} h(x^k)=\widehat{h}$. The assertion now follows from
    \[
    0 = \lim_{k\to \infty}(h(x^k + \alpha^k d^k)-h(x^k)) \geq \lim_{k\to \infty}\gamma\alpha^k\nabla h(x^k)^\mathsf{T}d^k \geq \gamma\beta\lim_{k\to \infty} \alpha^k \|d^k\|^2,
    \]
    by applying \eqref{eq: func increase} and \eqref{eq: ascent dir}, respectively.

(iv) The upper level set $\mathcal{L}_{x_0}$ is compact analogously to the proof of Proposition \ref{prop:exist}. It is also convex since $h$ is pseudoconcave, see Proposition \ref{prop: h concave1}, and, hence, quasiconcave.
     Moreover, $\mathcal{L}_{x_0}$ is a subset of the set of Slater points for \eqref{eq:ineq}, due to the barrier property of $u$. Overall, we can apply the descent lemma on the interval $\left[x^{k}, x^{k+1} \right] \in {\mathcal{L}}_{x^0}$ and obtain
        \[
    h(x^{k+1}) \geq h(x^k) + \alpha^k\nabla h(x^k)^\mathsf{T}d^k - \frac{(\alpha^{k})^2L}{2}\|d^k\|^2 \geq h(x^k) + \gamma\alpha^k\nabla h(x^k)^\mathsf{T}d^k,
    \]
    where the second inequality is valid because 
    \[
    \frac{2(1-\gamma)\nabla h(x^k)^\mathsf{T}d^k}{L\|d^k\|^2} \overset{\eqref{eq: ascent dir}}{\geq} \frac{2(1-\gamma)\beta}{L} \geq \alpha^k.
    \]

(v) By the characteristic property of the projection operator, we have
    \[
    (x^k + \tau \nabla h(x^k) - Proj_{K}(x^k + \tau \nabla h(x^k) ))^\mathsf{T}(x - Proj_{K}(x^k + \tau \nabla h(x^k) )) \leq 0, \quad \forall \; x \in K.
    \]
    Plugging $x =x^k$,
    \[
    \tau \nabla h(x^k)^\mathsf{T}(x^k-Proj_{K}(x^k + \tau \nabla h(x^k) ))+\|x^k-Proj_{K}(x^k + \tau \nabla h(x^k) )\|^2 \leq 0,
    \]
    which in turn yields
    \[
    \nabla h(x^k)^\mathsf{T} d^k = \nabla h(x^k)^\mathsf{T}(Proj_{K}(x^k + \tau \nabla h(x^k)) - x^k) \geq \frac{1}{\tau}\|x^k-Proj_{K}(x^k + \tau \nabla h(x^k) )\|^2 = \frac{1}{\tau}\|d^k\|^2,
    \]
    and thus \eqref{eq: ascent dir} holds with $\beta = 1/\tau$ and Step \ref{stepalpha} is well-defined.
    \end{proof}

Let us briefly comment on the backtracking technique for computing stepsizes in Algorithm \ref{alg:one}.

\begin{remark}
\label{remark:backtracking}
In order to compute a stepsize $\alpha^k$ satisfying the conditions in Step 4 of Algorithm \ref{alg:one}, one can rely on the following backtracking technique.
Set $\delta \in (0,1)$ and $\alpha^k_{base} > 0$ and find the smallest integer $j^k \geq 0$ such that  
  $\alpha^k = \delta^{j^k}\alpha^k_{base}$
    satisfies condition \eqref{eq: func increase} and $x^{k+1} \in K$.
\end{remark}

In next Theorem \ref{th:convergence}, we show the convergence properties of the scheme.

\begin{theorem} \label{th:convergence}
 Let $u$ be a barrier, monotone on its domain and pseudoconcave on the interior of its domain. Assume that Slater condition is satisfied for \eqref{eq:ineq}. Let $\nabla u$ be locally Lipschitz on the interior of its domain, and $\nabla f$ be locally Lipschitz. 
 If, for every $k$, $d^k$ is chosen according to \eqref{eq: direction definition} and $\alpha^k$ is computed relying on a backtracking technique, then every limit point $\overline{x}$ of the sequence $\{x^k\}$ generated by Algorithm \ref{alg:one} is a solution of \eqref{eq:sobjutil1}.
\end{theorem}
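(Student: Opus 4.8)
The plan is to combine the structural facts about Algorithm \ref{alg:one} collected in Lemma \ref{th: algorithm results} with the solution characterization of Theorem \ref{th: h concave}. I would first record what Lemma \ref{th: algorithm results} already gives under the stated hypotheses (note that $d^k$ from \eqref{eq: direction definition} satisfies \eqref{eq: ascent dir} with $\beta=1/\tau$, so the lemma applies, and if ever $d^k=0$ the iterate $x^k$ is a stationary Slater point and we are already done by Theorem \ref{th: h concave}): each $x^k$ is a Slater point, hence lies in the upper level set $\mathcal{L}_{x^0}=\{x\in K:\ h(x)\ge h(x^0)\}$, which is nonempty, compact (as in the proof of Proposition \ref{prop:exist}), convex (since $h$ is pseudoconcave, hence quasiconcave, on the convex set of Slater points, cf. Proposition \ref{prop: h concave1}), and contained in the open set of Slater points by the barrier property of $u$. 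Consequently $\{x^k\}$ is bounded, its limit points exist and all lie in $\mathcal{L}_{x^0}$, hence are Slater points; moreover $\{h(x^k)\}$ is nondecreasing and bounded above, and $\alpha^k\|d^k\|^2\to 0$ by Lemma \ref{th: algorithm results}(iii).

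The quantitative core is to show that the backtracking stepsizes are bounded away from zero, which combined with $\alpha^k\|d^k\|^2\to 0$ forces $d^k\to 0$. For this I would establish that $\nabla h$ is Lipschitz on a compact convex neighborhood $C$ of $\mathcal{L}_{x^0}$ still contained in the (open, convex) set of Slater points; such $C$ exists since $\mathcal{L}_{x^0}$ is compact. By the chain rule $\nabla h(x)=-\sum_{j=1}^m \partial_j u(a-f(x))\,\nabla f_j(x)$, and the key observation is that the continuous image $\{a-f(x):x\in C\}$ is a compact subset of $\R^m_{++}=\text{int dom }u$ (a compact set inside an open set stays at positive distance from its complement), so $\nabla u$ is Lipschitz there; together with the local Lipschitz continuity of $\nabla f$ and the standard product rule for Lipschitz maps on compact sets, this yields a Lipschitz constant $L$ for $\nabla h$ on $C$. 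Since $d^k=Proj_{K}(x^k + \tau \nabla h(x^k))-x^k$ and $Proj_{K}$ is nonexpansive, $\|d^k\|\le\tau\|\nabla h(x^k)\|$ is uniformly bounded on $\mathcal{L}_{x^0}$, and the trial points $x^k+\alpha d^k$ are convex combinations of $x^k$ and $Proj_{K}(x^k + \tau \nabla h(x^k))\in K$ for $\alpha\in[0,1]$, hence feasible; for $\alpha$ below an explicit positive threshold the whole segment $[x^k,x^k+\alpha d^k]$ stays in $C$. The descent-lemma computation of Lemma \ref{th: algorithm results}(iv), using \eqref{eq: ascent dir}, then shows that \eqref{eq: func increase} holds for every such $\alpha$ below a threshold depending only on $\gamma$, $\tau$, $L$ and the size of $C$; hence the backtracking procedure of Remark \ref{remark:backtracking} terminates with $\alpha^k\ge\underline\alpha>0$ uniformly in $k$ (assuming, as is standard, $\inf_k\alpha^k_{base}>0$), and therefore $\underline\alpha\,\|d^k\|^2\le\alpha^k\|d^k\|^2\to 0$ gives $d^k\to 0$.

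It then remains to pass to a limit point $\overline x=\lim_{k\in\mathcal K}x^k$. As noted, $\overline x$ is a Slater point, so $\nabla h$ is continuous at $\overline x$ and the map $x\mapsto Proj_{K}(x+\tau\nabla h(x))-x$ is continuous there; letting $k\to\infty$ in $\mathcal K$ in the identity defining $d^k$ and using $d^k\to 0$ yields $Proj_{K}(\overline x+\tau\nabla h(\overline x))=\overline x$, whence the variational inequality characterizing the projection gives $\tau\nabla h(\overline x)^\mathsf{T}(x-\overline x)\le 0$ for all $x\in K$, i.e. \eqref{eq: stationary} holds at $\overline x$. Since $u$ is a barrier, monotone on its domain and pseudoconcave on its interior, and $\overline x$ is a Slater point satisfying \eqref{eq: stationary}, Theorem \ref{th: h concave} applies and shows that $\overline x$ solves \eqref{eq:sobjutil1}.

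The main obstacle is the second paragraph: promoting the merely local Lipschitz hypotheses on $\nabla u$ and $\nabla f$ into a uniform-in-$k$ lower bound on the backtracking stepsizes. The two delicate points are (a) verifying that $\{a-f(x):x\in\mathcal{L}_{x^0}\}$ is a compact subset of the open orthant $\R^m_{++}$ — this is precisely where the barrier property is used, since $h(x^k)>\overline u$ is what keeps $a-f(x^k)$ strictly interior — and (b) ensuring the backtracking trial segments do not leave the region where the Lipschitz estimate is valid, which is handled by the uniform bound on $\|d^k\|$ together with the enlarged neighborhood $C$. Everything else is either already contained in Lemma \ref{th: algorithm results} or is a routine continuity/compactness argument.
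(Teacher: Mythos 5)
Your proposal is correct and follows essentially the same route as the paper's proof: Lemma \ref{th: algorithm results} gives $\alpha^k\|d^k\|^2\to 0$ and confines the iterates to the compact, convex level set $\mathcal{L}_{x^0}$ of Slater points, the descent-lemma bound yields a uniform positive lower bound on the backtracking stepsizes so that $d^k\to 0$, and continuity of the projected-gradient fixed-point map plus Theorem \ref{th: h concave} finishes the argument. The only difference is that you spell out more carefully than the paper why the merely local Lipschitz hypotheses on $\nabla u$ and $\nabla f$ upgrade to a uniform Lipschitz constant for $\nabla h$ on a compact neighborhood of $\mathcal{L}_{x^0}$ (via the barrier property keeping $a-f(x)$ in a compact subset of $\R^m_{++}$) and why the backtracking trial segments stay in that region; the paper compresses this into Lemma \ref{th: algorithm results}(iv).
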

\begin{proof}
Recalling the proof of Theorem \ref{th: h concave}, $h$ is pseudoconcave on the set of Slater points for \eqref{eq:ineq}. Moreover, the assumptions  on $\nabla u$ and $\nabla f$ yield the local Lipschitz continuity of $\nabla h$ on the set of Slater points of \eqref{eq:ineq}. Now, a backtracking technique is restarted with a guess $\alpha^k_{base} > \frac{2(1 - \gamma) \beta}{L}$ for every $k$. By recalling $x^k+d^k \in K$, which is valid due to (iv) in Lemma \ref{th: algorithm results}, there exists then $\eta \in \left(0, \frac{2(1 - \gamma) \beta}{L}\right)$ such that $\alpha^k \ge \eta$ for every $k$. In particular, this implies
    %Step 3 is well-defined because of Theorem \ref{th: alphak bounded away} and $x^k + d^k \in K$, with $K$ convex. We have
    \[
    0 = \lim_{k\to \infty}\alpha^k\|d^k\|^2 \geq \eta\lim_{k\to \infty}\|d^k\|^2 = \eta\lim_{k\to \infty}\|Proj_{K}(x^k + \tau \nabla h(x^k) )-x^k\|^2,
    \]
    where the first equality is due to (iii) in Lemma \ref{th: algorithm results}. For every limit point of $\{x^k\}$ it holds $\overline x \in {\mathcal L}_{x^0}$, which is a convex and compact subset of the set of Slater points for \eqref{eq:ineq}, and 
    \[
    \overline{x} = Proj_{K}(\overline{x} + \tau \nabla h(\overline{x})) \iff \nabla h(\overline{x})^\mathsf{T}(x-\overline{x})\leq 0, \quad \forall x \in K.
    \]
    Thus, $\overline{x}$ is stationary for \eqref{eq:sobjutil1}, and the assertion follows due to Theorem \ref{th: h concave}.
\end{proof}

Algorithm \ref{alg:one} allows to iteratively compute Slater points for \eqref{eq:ineq} disregarding the disagreement reference point constraints. 
This comes at the price of identifying a starting Slater point $x^0 \in K$ for \eqref{eq:ineq}. 
The difficulty in finding such a point is very much problem-dependent. E.g., when dealing with bi-objective portfolio-selection problems, this can be done rather easily, see the empirical analysis in \cite[Section 3]{cesarone2018risk}. In more general situations, one can rely on Algorithm \ref{alg:two}. 
The procedure there might require the computation of potentially costly projections on the constraint set \eqref{eq:ineq}. However, this is still significantly less compared to the amount of projection iterations we spare thanks to Algorithm \ref{alg:one}. 
In fact, to satisfy the condition in Step 3 of Algorithm \ref{alg:two}, just a single projected-gradient iteration suffices.
\begin{algorithm}[h]
\caption{Computation of a Slater point for \eqref{eq:ineq} \label{alg:two}}
{Set $\overline{J}=\emptyset$\;}
\For{$j = 1,\ldots$,m}
{
\uIf{$x \in F$ exists such that $f_j(x)<a_j$}{Set $x^j = x$\;}
\Else{Set $\overline{J}=\overline{J}\cup j$\;
}
}
\uIf{$m = |\overline{J}|$}{\Return $\overline J$\;}
\Else{
\Return $\displaystyle \overline{x} = \frac{1}{m-|\overline{J}|}\sum_{j\notin \overline{J}}x^j$ and $\overline{J}$.}
\end{algorithm}  
Let us study the output of Algorithm \ref{alg:two} depending on the size of $|\overline{J}|$.  
Note that Algorithm \ref{alg:two} produces the index set 
\[
\overline{J}\triangleq\{j \in \{1,\ldots,m\}: f_j(x) = a_j, \forall x \in F\},
\]
where we denote for brevity the set given by disagreement reference point constraints as
\[
   F \triangleq \left\{ x \in K : f_j(x) \leq a_j, j=1,\ldots,m\right\}.
\]

\begin{theorem}\label{th:interior starting}
   The following statements hold:
\begin{itemize}
    \item[(i)] if $|\overline{J}| = 0$, then Algorithm \ref{alg:two} returns a Slater point $\overline{x} \in K$ for \eqref{eq:ineq};
    \item[(ii)]  if $0<|\overline{J}|< m$, then $F \subseteq W$;
    \item[(iii)] if $|J| = m$, then $F \subseteq P$.
\end{itemize}
\end{theorem}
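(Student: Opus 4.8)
The plan is to analyze the three cases according to the value of $|\overline J|$, exploiting the characterization of $\overline J$ as $\{j : f_j(x) = a_j \text{ for all } x \in F\}$ and the (weak) Pareto notions from the definitions.

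\textbf{Case (i): $|\overline J| = 0$.} Here, for every index $j \in \{1,\ldots,m\}$ there exists (by the \texttt{if}-branch of Algorithm \ref{alg:two}) a point $x^j \in F$ with $f_j(x^j) < a_j$; note $F$ equals the nonempty set \eqref{eq:ineq}, so the algorithm is well-defined. I would form the average $\overline x = \frac{1}{m}\sum_{j=1}^m x^j$. Since $K$ is convex and each $x^j \in K$, we get $\overline x \in K$. For a fixed index $\ell$, convexity of $f_\ell$ gives $f_\ell(\overline x) \le \frac{1}{m}\sum_{j=1}^m f_\ell(x^j)$; each term satisfies $f_\ell(x^j) \le a_\ell$ (feasibility of $x^j$ for \eqref{eq:ineq}), and the term $j = \ell$ satisfies the strict inequality $f_\ell(x^\ell) < a_\ell$. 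Hence $f_\ell(\overline x) < a_\ell$. As $\ell$ was arbitrary, $\overline x$ is a Slater point for \eqref{eq:ineq}.

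\textbf{Case (ii): $0 < |\overline J| < m$.} Take any $x \in F$; I want to show $x \in W$, i.e. rule out the existence of $\widetilde x \in K$ with $f_j(\widetilde x) < f_j(x)$ for all $j$. Suppose such $\widetilde x$ exists. For $j \in \overline J$ we have $f_j(x) = a_j$, so $f_j(\widetilde x) < a_j$; for $j \notin \overline J$ we have $f_j(x) \le a_j$, so again $f_j(\widetilde x) < a_j$. Thus $\widetilde x \in F$ with $f_j(\widetilde x) < a_j$ for every $j$ — in particular $f_j(\widetilde x) < a_j$ for each $j \in \overline J$, contradicting the definition of $\overline J$ as the set of indices for which $f_j \equiv a_j$ on all of $F$. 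Hence no such $\widetilde x$ exists and $x \in W$, giving $F \subseteq W$.

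\textbf{Case (iii): $|\overline J| = m$.} Now $f_j(x) = a_j$ for all $j$ and all $x \in F$; in particular $f$ is constant on $F$. Take any $x \in F$ and suppose $x \notin P$: then there is $\widetilde x \in K$ with $f_j(\widetilde x) \le f_j(x) = a_j$ for all $j$ and $f_{j_x}(\widetilde x) < f_{j_x}(x) = a_{j_x}$ for some $j_x$. The inequalities $f_j(\widetilde x) \le a_j$ put $\widetilde x \in F$, whence $f_{j_x}(\widetilde x) = a_{j_x}$ by the case hypothesis, contradicting $f_{j_x}(\widetilde x) < a_{j_x}$. Therefore $x \in P$, and $F \subseteq P$. (I note a minor discrepancy: the statement writes ``$|J| = m$'' where ``$|\overline J| = m$'' is clearly intended, matching the \texttt{if} $m = |\overline J|$ branch of the algorithm.)

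The main obstacle is essentially bookkeeping rather than depth: one must carefully keep track, in cases (ii) and (iii), of why a candidate dominating point $\widetilde x$ is forced to lie in $F$ — this is where the feasibility inequalities $f_j(x) \le a_j$ are combined with the strict/weak domination to deduce $f_j(\widetilde x) \le a_j$ (or $< a_j$), and then the definition of $\overline J$ is invoked to reach the contradiction. The convexity argument in case (i) is routine.
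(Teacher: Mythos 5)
Your proof is correct and follows essentially the same route as the paper: the same averaging-plus-convexity argument for (i), and the same feasibility bookkeeping for (ii) and (iii). The only cosmetic difference is that in case (ii) you inline the argument that a strictly dominating point would be a Slater point, whereas the paper simply observes that the Slater condition fails and invokes Proposition \ref{prop: Slat Weak}.
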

\begin{proof}
    (i) If $|\overline{J}| = 0$, then $\overline x \in K$, by its convexity. Moreover, for every $j\in\{1,\ldots,m\}$,
    \[f_j(\overline{x}) \leq \displaystyle \frac{1}{m}\sum_{i=1}^m f_j(x^i)= \displaystyle \frac{1}{m}\left[\sum_{i\neq j}f_j(x^i) + f_j(x^j)\right] < \displaystyle \frac{1}{m}m a_j = a_j,
    \]
    where the first inequality is due to the convexity of $f_j$, and the strict inequality follows from Step 3 in Algorithm \ref{alg:two}.

    (ii) If $0<|\overline{J}|< m$, then there is $j\in\overline{J}$ such that $f_j(x)=a_j$ for every $x\in F$. Thus, Slater condition does not hold for \eqref{eq:ineq}, and, due to Proposition \ref{prop: Slat Weak}, $F \subseteq W$. 

    (iii) If $|J| = m$, we consider an arbitrary $\overline{x} \in F$. For all ${x} \in F$, we have $f( \overline{x}) = f(x) = a$. For any $ x \in K \backslash F$, there exists $j_x$ such that $f_{j_x}(\overline{x}) = a_{j_x} < f_{j_x}(x)$.  
\end{proof}

Theorem \ref{th:interior starting} shows how, whenever $|\overline{J}| = 0$, one can compute Pareto optimal points in \eqref{eq:ineq} through Algorithm \ref{alg:one} with $x^0 = \overline{x}$, by using a suitable strictly monotone utility function, see Theorem \ref{th:convergence}.
Whenever $|\overline{J}| = m$, any point in \eqref{eq:ineq} is a Pareto optimal point for \eqref{eq:mobj}.
Lastly, if $0<|\overline{J}|<m$, all points in \eqref{eq:ineq} are weak Pareto optimal points for \eqref{eq:mobj}. In this case, Algorithm \ref{alg:one} cannot be implemented since the Slater condition for \eqref{eq:ineq} is violated and it is impossible to find a starting point.
In order to nevertheless compute Pareto optimal points in \eqref{eq:ineq} also for this case, we introduce the ``reduced'' problem
\begin{equation} \label{eq:mobj reduced}
    \begin{array}{cl}
    \underset{x}{\mbox{minimize}} & \widetilde f(x) \triangleq \big[f_j(x)\big]_{j \in  \{ 1,\ldots,m\} \setminus \overline{J}}^\top\\
    \mbox{s.t.} & x \in \widetilde K \triangleq \left\{ y \in K : f_j(y) \leq a_j, j\in \overline{J}\right\}.
    \end{array}
\end{equation}
%Even if $\overline J \neq \emptyset$, and thus $\left\{ y \in K : f_j(y) \leq a_j, j=1,\ldots,m\right\} \subseteq W$ (see Theorem \ref{th:interior starting}), one can 
    Algorithm \ref{alg:two}, when applied to \eqref{eq:mobj reduced}, returns  a Slater point for the set of corresponding disagreement reference point constraints 
    \[
       \widetilde F \triangleq \{x \in \widetilde K : f_j(x) \leq a_j, j \in \{1,\ldots,m\}\backslash \overline{J}\}.
    \]
    Afterwards, it can be used as a starting point for Algorithm \ref{alg:one}, now applied to the ``reduced'' problem \eqref{eq:mobj reduced}. Its Pareto optimal points can be thus computed by using a suitable strictly monotone utility function, see Theorem \ref{th:convergence}.
  Thanks to Proposition \ref{prop:problequiv}, whose proof can be related to the classical results on the $\varepsilon$-constraints approach, the computed Pareto optimal points for \eqref{eq:mobj reduced} turn out to be also Pareto optimal points for \eqref{eq:mobj}.

\begin{proposition}\label{prop:problequiv}
 $\widehat x \in P \cap F$ if and only if $\widehat x \in \widetilde F$ is Pareto optimal for \eqref{eq:mobj reduced}.
\end{proposition}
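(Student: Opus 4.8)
The plan is to establish both implications by contraposition, leaning on two structural facts about the index set $\overline J\triangleq\{j\in\{1,\ldots,m\}: f_j(x)=a_j,\ \forall x\in F\}$. First, $\widetilde F = F$: by definition $x\in\widetilde F$ means $x\in K$ together with $f_j(x)\le a_j$ for $j\in\overline J$ and $f_j(x)\le a_j$ for $j\notin\overline J$, which is exactly $x\in F$; in particular $F\subseteq\widetilde K$. Second, by the very definition of $\overline J$, every $x\in F$ satisfies $f_j(x)=a_j$ for all $j\in\overline J$; and since any $y\in K$ with $f_j(y)\le f_j(x)\le a_j$ for all $j$ again lies in $F$, such a $y$ necessarily has all its $\overline J$-objectives equal to the corresponding $a_j$. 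These are the only two ingredients; the rest is bookkeeping, and no genuine computation is involved.

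For the direction ``$\widehat x\in P\cap F\ \Rightarrow\ \widehat x\in\widetilde F$ and $\widehat x$ Pareto optimal for \eqref{eq:mobj reduced}'', the membership $\widehat x\in\widetilde F$ is immediate from $\widetilde F=F$. For Pareto optimality in the reduced problem, I would assume the contrary: there is $y\in\widetilde K$ with $f_j(y)\le f_j(\widehat x)$ for all $j\notin\overline J$ and $f_{j_0}(y)<f_{j_0}(\widehat x)$ for some $j_0\notin\overline J$. Since $y\in\widetilde K\subseteq K$ and, for $j\in\overline J$, $f_j(y)\le a_j = f_j(\widehat x)$ (the last equality because $\widehat x\in F$), the point $y$ would dominate $\widehat x$ in \eqref{eq:mobj}, contradicting $\widehat x\in P$.

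For the converse, suppose $\widehat x\in\widetilde F$ is Pareto optimal for \eqref{eq:mobj reduced}. Then $\widehat x\in\widetilde F=F$, so it only remains to prove $\widehat x\in P$. If not, there is $y\in K$ with $f_j(y)\le f_j(\widehat x)$ for all $j$ and $f_{j_0}(y)<f_{j_0}(\widehat x)$ for some $j_0$. Because $f_j(y)\le f_j(\widehat x)\le a_j$ for all $j$, we get $y\in F$, hence $f_{j_0}(y)=a_{j_0}$ whenever $j_0\in\overline J$; but then $f_{j_0}(y)=a_{j_0}=f_{j_0}(\widehat x)$ would contradict the strict inequality, so necessarily $j_0\notin\overline J$. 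Finally $y\in F\subseteq\widetilde K$, and $y$ dominates $\widehat x$ among the objectives $f_j$, $j\notin\overline J$, with strict improvement at $j_0\notin\overline J$, contradicting the Pareto optimality of $\widehat x$ for \eqref{eq:mobj reduced}.

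The only step that is not pure rewriting — and the analogue of the ``active constraint'' hypothesis in the classical $\varepsilon$-constraint reductions — is forcing the strictly improving index $j_0$ out of $\overline J$; this is exactly where one exploits that a dominating point stays inside $F$, so the ``frozen'' objectives admit no improvement. I expect this to be the only place where a reader might pause, and the write-up should make that observation explicit; everything else follows from the identity $\widetilde F=F$ and the inclusion $F\subseteq\widetilde K$.
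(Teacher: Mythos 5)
Your proposal is correct and follows essentially the same route as the paper's proof: establish $F=\widetilde F$, prove the forward direction by lifting a dominating point of the reduced problem to one of \eqref{eq:mobj} via $f_j(\widehat x)=a_j$ on $\overline J$, and prove the converse by showing a dominating point must lie in $F$, which forces the strictly improving index out of $\overline J$. No gaps; the step you flag as the only non-trivial one is exactly the pivot of the paper's argument as well.
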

\begin{proof}
Preliminarily, we note that $F=\widetilde F$. 
First, we show that if $\widehat x \in P\cap F$, then it is also Pareto optimal for \eqref{eq:mobj reduced}. If not, there would exist $x \in \widetilde K$ such that
$$
\;\forall j \in \{1,\ldots,m\}\backslash \overline{J}: \; f_{j}(\widehat x) \geq f_{j}(x), \, \text{and} \; \;
  \exists j_x \in \{1,\ldots,m\}\backslash \overline{J} \, : \, f_{j_x}(\widehat x) > f_{j_x}(x).
$$
In particular, $x \in K$ and $f_j(x) \leq a_j$ for all $j \in \overline{J}$.
Due to the construction of $\overline{J}$ and $\widehat x\in F$, we also have $f_j(\widehat x) = a_j$ for all $j \in \overline{J}$. Altogether, it holds for $x \in K$:
$$
\;\forall j \in \{1,\ldots,m\}: \; f_{j}(\widehat x) \geq f_{j}(x), \, \text{and} \; \;
  \exists j_x \in \{1,\ldots,m\} \, : \, f_{j_x}(\widehat x) > f_{j_x}(x),
$$
i.e. $\widehat x \not \in P$, a contradiction. 

For the converse, we start with  $\widehat x \in \widetilde F$ being Pareto optimal for \eqref{eq:mobj reduced}. If $\widehat x \not \in P$, 
there would exist $x \in K$ such that
$$
\;\forall j \in \{1,\ldots,m\}: \; f_{j}(\widehat x) \geq f_{j}(x), \, \text{and} \; \;
  \exists j_x \in \{1,\ldots,m\} \, : \, f_{j_x}(\widehat x) > f_{j_x}(x).
$$
Assume that $j_x \in \overline{J}$. Due to the construction of $\overline{J}$ and $\widehat x \in F$, we have $f_{j_x}(\widehat x) = a_{j_x}$. Since $f_{j}(x) \leq f_{j}(\widehat x)$ and $f_{j}(\widehat x) \leq a_j$ for all $j \in \{1,\ldots,m\}$, we obtain $x \in F$. By the same reasoning, $f_{j_x}(x) = a_{j_x}$ follows. Then, $f_{j_x}(\widehat x)=f_{j_x}(x)$, a contradiction, and, hence, $j_x \not \in \overline{J}$.  Altogether, it holds for $x \in \widetilde K$:
$$
\;\forall j \in \{1,\ldots,m\}\backslash \overline{J}: \; f_{j}(\widehat x) \geq f_{j}(x), \, \text{and} \; \;
  \exists j_x \in \{1,\ldots,m\}\backslash \overline{J} \, : \, f_{j_x}(\widehat x) > f_{j_x}(x),
$$
i.e. $\widehat x$ is not Pareto optimal for \eqref{eq:mobj reduced}, a contradiction.
\end{proof}
Summarizing, the combination of Algorithms \ref{alg:one} and \ref{alg:two} allows one to compute Pareto optimal points for \eqref{eq:mobj} even in the case where the Slater condition for \eqref{eq:ineq} is violated.

\section{Portfolio-selection and numerical results}\label{Sec:numerical}

We tackle a portfolio-selection problem where an investor is considering $n$ assets of a market and chooses the fractions $x \in \mathbb R^n$ of their budget to invest in each one, while optimizing multiple objectives.
The classical criteria we consider are the portfolio's risk (to be minimized),
\[
f_1(x) \triangleq \frac{1}{2}x^\top \Sigma x,
\]
 where $\Sigma \in \mathbb R^{n\times n}$ is the  positive semidefinite covariance matrix of assets' returns, and the portfolio's expected return (to be maximized),
\[
f_2(x) = \mu^\top x,
\]
 where $\mu \in \mathbb R^n$ are the assets' expected return.
Thirdly, we consider the sustainability-oriented criterion given by the portfolio's Environmental, Social, and corporate Governance (ESG) score (to be maximized),
\[
f_3(x) = ESG^\top x,
\]
where $ESG \in \mathbb R^n$ are the assets' ESG scores, see \cite{cesarone2023bilevel,lampariello2023solving} for a more in-depth description of the ESG-related term.
The account owners can invest between $0  \%$ and $100\%$ of their budget in each asset, and must invest the whole budget. The feasible set of the considered multi-objective problem is the following: 
\[
K=\left\{x \in \mathbb R^n: \sum_{i=1}^n x_i = 1, 0 \leq x \leq 1\right\}.\]
Notice that all three objectives are convex functions, and the feasible set is convex.

We consider a dataset consisting in daily prices, adjusted for dividends and stock splits, daily traded volumes and daily ESG scores (from 01/01/2019 to 31/12/2020) downloaded from Thomson Reuters Datastream. Specifically, we consider the Dow Jones Industrial Average (DJIA), composed of $n=28$ assets and the NASDAQ 100 (NDX), composed of $n=91$ assets.

 We adopt the equally weighted portfolio $\overline{x} = \left(\frac{1}{n},\ldots,\frac{1}{n}\right)$ as the reference. Therefore, we get the reference point $a = f(\overline{x}) = [1.3737e-04, 7.3730e-04, 7.8139e+01]$. In order to compute the Slater starting point, we use Algorithm \ref{alg:two}. To satisfy the condition $f_j(x) < a_j$ there, we address the optimization problem 
 \[
    \begin{array}{cl}
    \underset{x}{\mbox{minimize}} & f_j(x)\\
    \mbox{s.t.} & x \in \left\{ y \in K : f_j(y) \leq a_j, j=1,2,3\right\}
    \end{array}
\]
for $j = 1,2,3$, relying on just $15$ iterations of the MATLAB built-in function \texttt{quadprog}. For the specific case we consider, it holds $|\overline{J}| = 0$, and Algorithm \ref{alg:two} computes quite efficiently a Slater point to be used as $x^0$ for Algorithm \ref{alg:one}, see Theorem \ref{th:interior starting}.

 We test Algorithm \ref{alg:one} with two different utility functions: $u_\text{CD}$ from \eqref{eq:CobbDoug} with $\alpha_j = \frac{1}{3}$ for $j = 1,2,3$, and $u_\text{CES}$ from \eqref{eq:CES} with $\alpha_j = 1$ for $j = 1,2,3$, $\kappa = 1$, and $\rho = -1/2$. We use a projected-gradient method, where the stepsizes $\alpha^k$, that satisfy the conditions in Step 3 with $\gamma = 0.5$, are obtained by the backtracking procedure with $\delta = 0.5$ and $\alpha_{base}^k = 1$ for $u_{\text{CD}}$, and $\alpha_{base}^k = 50$ for $u_{\text{CES}}$, for every $k$, see Remark \ref{remark:backtracking}. 
 In our setting, the projection on $K$ is quite efficient, as we can use a closed-form solution from \cite{lampariello2021equilibrium}, and do not need to solve an optimization problem at each iteration.

To visualize an approximation of the efficient frontier, we use the constraint $f_3(x) = \overline f_3$ for $100$ different equally spaced values of $\overline f_3$, from $\displaystyle \min_{x \in K} f_3(x)$ to $\displaystyle \max_{x \in K} f_3(x)$. For each of them we additionally solve $\displaystyle\min_{x \in K} \lambda f_1(x) + (1-\lambda) f_2(x)$ for $1000$ equally spaced values of $\lambda$, from $0$ to $1$, resulting in $100000$ Pareto-efficient points.
Figures \ref{fig: efficient frontier} and \ref{fig: efficient frontier 2} show the points of the efficient frontier (blue) that dominate the reference level $a$ (cyan), as well as the starting point $x^0$ (green) and the optimal points for $u_{\text{CD}}$ (violet) and for $u_{\text{CES}}$ (red) in the objective space for the DIJA and the NDX dataset, respectively. It is clear that for both scalarizations considered, our method leads to Pareto optimal points. In Table \ref{tab: values}, we show the values of the three objectives and of the utility function when evaluated at the starting point $x^0$ and at the final one $x^*$ for the two datasets, and for the two utility functions considered. 

 \begin{figure}[h] 
\caption{Efficient frontier (in blue), reference level $a$ (cyan), starting point $x^0$ (green), optimal points for $u_{\text{CD}}$ (violet) and for $u_{\text{CES}}$ (red) in the objective space for the DIJA dataset}
\centering
\includegraphics[width=0.8\textwidth]{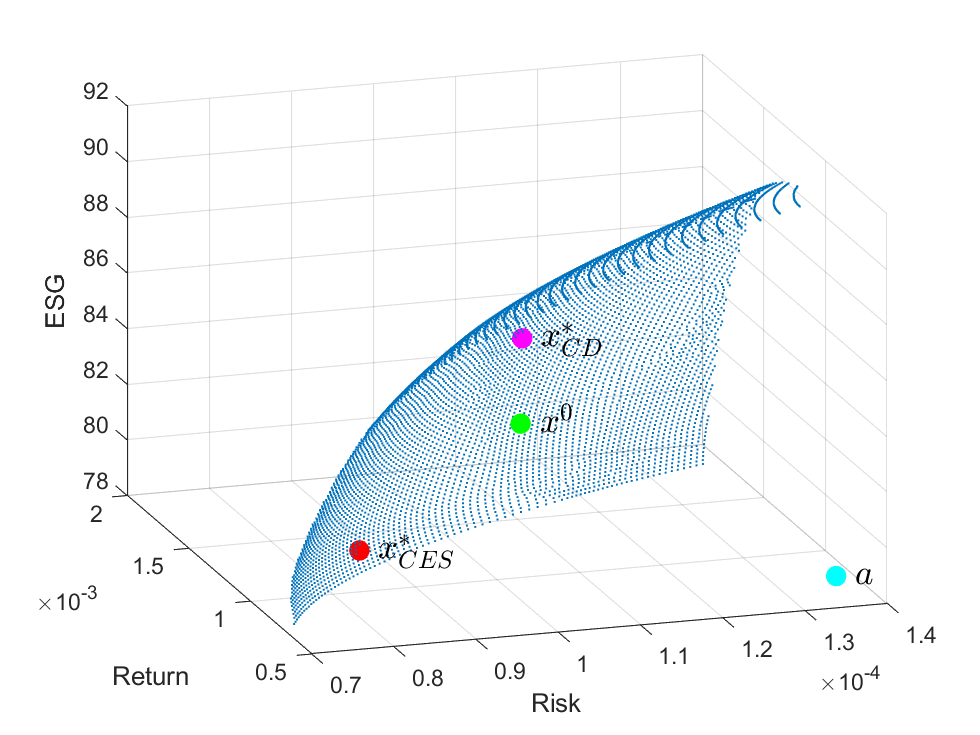}\label{fig: efficient frontier}
\end{figure}

\begin{figure}[h] 
\caption{Efficient frontier (in blue), reference level $a$ (cyan), starting point $x^0$ (green), optimal points for $u_{\text{CD}}$ (violet) and for $u_{\text{CES}}$ (red) in the objective space for the  NDX dataset}
\centering
\includegraphics[width=0.8\textwidth]{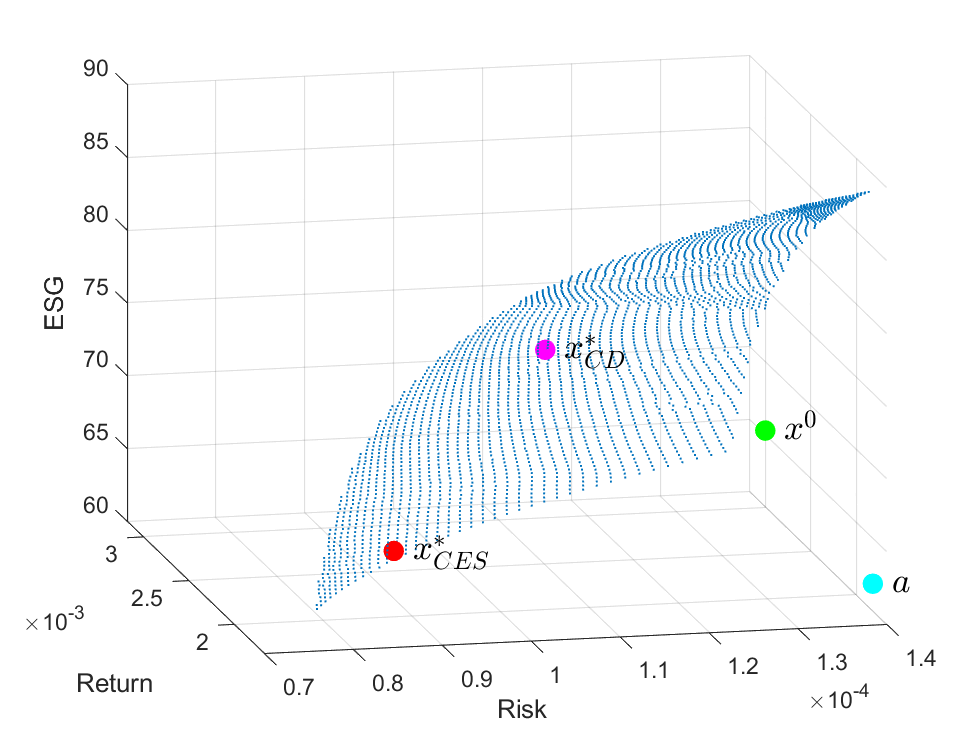} \label{fig: efficient frontier 2}
\end{figure}

\begin{table}[]
\resizebox{\textwidth}{!}{%
\begin{tabular}{cccccccccc}
\multicolumn{1}{l}{}  &                  & $f_1(x^0)$                  & $f_2(x^0)$                  & $f_3(x^0)$                  & $h(x^0)$   & $f_1(x^*)$ & $f_2(x^*)$ & $f_3(x^*)$ & $h(x^*)$   \\ \hline
\multirow{2}{*}{DIJA} & $u_{\text{CD}}$  & \multirow{2}{*}{1.0503e-04} & \multirow{2}{*}{1.1416e-03} & \multirow{2}{*}{8.2926e+01} & 3.9702e-03 & 1.0618e-04 & 1.2060e-03 & 8.5719e+01 & 4.8025e-03 \\
                      & $u_{\text{CES}}$ &                             &                             &                             & 1.9571e-05 & 8.4997e-05 & 1.1139e-03 & 7.8999e+01 & 2.7471e-05 \\
\multirow{2}{*}{NDX}  & $u_{\text{CD}}$  & \multirow{2}{*}{1.2827e-04} & \multirow{2}{*}{1.8537e-03} & \multirow{2}{*}{7.2507e+01} & 2.6612e-03 & 1.0777e-04 & 2.2702e-03 & 7.6142e+01 & 6.2829e-03 \\
                      & $u_{\text{CES}}$ &                             &                             &                             & 6.6940e-06 & 8.9570e-05 & 2.1294e-03 & 6.3713e+01 & 2.7590e-05
\end{tabular}}
\caption{Values of the three objectives and of the utility function when evaluated at the starting point $x^0$ and at the final one $x^*$ for the two datasets and for the two utility functions considered}
\label{tab: values}
\end{table}

%\section{Conclusions}

\section*{Appendix}

\begin{proof-app}{prop: cobb conc1}
 (i) 
Let us consider the auxiliary functions
   \[
      g(y) \triangleq \prod_{j=1}^m  \, y_j^{\frac{\alpha_j}{\alpha}}, y > 0, \quad w(t) \triangleq t^{\alpha}, t > 0.
   \]
   It is straightforward to check that $g$ is log-concave. Since the logarithm is monotonically increasing, the convexity of the upper level sets of $g$ follows. This means that $g$ is quasiconcave. Moreover, the function $g$ is homogeneous of degree one. Any quasiconcave function, which is additionally homogeneous of degree one, is concave, see \cite{silberberg2000structure}. Overall, we have just shown that $g$ is concave. Taking into account $\alpha \in (0,1]$ by assumption, $w$ is monotonically increasing and concave. Since the composition of a monotonically increasing and concave function with a concave function preserves concavity, $u_\text{CD} = w\circ g$ is concave on the interior of its domain. Since $u_\text{CD}$ is continuous on its domain and $\text{dom } u_\text{CD}$ is a convex set, the assertion follows.

   (ii) 
   For $\widehat y, \widetilde y \in \text{int dom }u_\text{CD}$ with $\widehat y \neq \widetilde y$, and $t \in (0,1)$, we have 
     \begin{equation}
         \label{eq:conv-cd-h}
         \begin{array}{rcl}
u_\text{CD}(t \widehat y +(1-t) \widetilde y) & = & w (g(t \widehat y +(1-t) \widetilde y)) \geq w (tg(\widehat y) +(1-t) g(\widetilde y))\\ \\
& \geq & t w (g(\widehat y)) +(1-t) w(g(\widetilde y)) = t u_\text{CES}(\widehat y) +(1-t) u_\text{CD}(\widetilde y).
 \end{array}
     \end{equation}
Here, the first inequality is due to the concavity of $g$, which is shown in (i), and the increasing monotonicity of $w$. The last inequality follows from the concavity of $w$. Moreover, $w$ is strictly monotonically increasing and strictly concave by the choice of $\alpha < 1$. Hence, we obtain at least one strict inequality in \eqref{eq:conv-cd-h} if $g(t \widehat y +(1-t) \widetilde y) > tg(\widehat y) +(1-t) g(\widetilde y)$ or $g(\widehat y) \neq g(\widetilde y)$. Then, the strict concavity of $u_\text{CD}$ on the interior of its domain would be shown. Assume on the contrary that 
 \begin{equation}
         \label{eq:conv-cd-h1}
g(t \widehat y +(1-t) \widetilde y) = tg(\widehat y) +(1-t) g(\widetilde y) \mbox{ and } g(\widehat y) = g(\widetilde y).
\end{equation}
We define the index subset
\[
   J_{\neq}\triangleq \{j \in \{1,\ldots,m \}: \widehat y_j \neq \widetilde y_j\}\neq \emptyset.
\]
It follows from \eqref{eq:conv-cd-h1}
\[
\prod_{j \in J_{\neq}}  \, (t \widehat y_j +(1-t) \widetilde y_j)^{\frac{\alpha_j}{\alpha}}=
\prod_{j \in J_{\neq}}  \, \widehat y_j^{\frac{\alpha_j}{\alpha}}=\prod_{j \in J_{\neq}}  \,  \widetilde y_j^{\frac{\alpha_j}{\alpha}},
\]
or, equivalently, by taking the logarithm
\begin{equation}
         \label{eq:conv-cd-h2}
  \sum_{j \in J_{\neq}}  \, \frac{\alpha_j}{\alpha}\log (t \widehat y_j +(1-t) \widetilde y_j)=
\sum_{j \in J_{\neq}}  \, \frac{\alpha_j}{\alpha}\log \widehat y_j=\sum_{j \in J_{\neq}}  \,  \frac{\alpha_j}{\alpha} \log \widetilde y_j.
\end{equation}
But, the strict concavity of the logarithm provides
\[
    \log (t \widehat y_j +(1-t) \widetilde y_j) >
    t \log \widehat y_j +(1-t) \log \widetilde y_j, \quad \forall j \in J_{\neq}.
\]
By multiplying these inequalities with $\frac{\alpha_j}{\alpha}> 0$, $j \in J_{\neq}$, and summing up, we obtain a contradiction together with \eqref{eq:conv-cd-h2}.
   
   (iii) Clearly, $u_\text{CD}$ is log-concave on the interior of its domain. In particular, the function $\log u_\text{CD}$ is pseudoconcave. Since the logarithm is monotonically increasing and $\nabla \log u_\text{CD}(y) = \frac{1}{u_\text{CD}(y)} \nabla u_\text{CD}(y)$, we easily deduce that $u_\text{CD}$ is also pseudoconcave.   
\end{proof-app}

\begin{proof-app}{prop: ces conc1}
%    After multiplication by a constant we may achieve that the input weights add up to one, i.e. without loss of generality
%    \[
%       \sum_{j=1}^{m} \alpha_j =1.
%    \]
%    Then,  can be applied.
(i)  
   Let us consider the auxiliary functions
   \[
      g(y) \triangleq \sum_{j=1}^m \alpha_j \, y_j^{\rho}, y >0, \quad v(z) \triangleq z^{\frac{1}{\rho}}, z >0, \quad w(t) \triangleq t^\kappa, t>0.
   \]
   For $\rho \in (0,1]$, $g$ is concave and $v$ is monotonically increasing. For $\rho \in (-\infty,0)$, $g$ is convex and $v$ is monotonically decreasing. In both cases, the convexity of the upper level sets of $v \circ g$ follows. This means that $v \circ g$ is quasiconcave. Moreover, the function $v \circ g$ is homogeneous of degree one. Any quasiconcave function, which is additionally homogeneous of degree one, is concave, see \cite{silberberg2000structure}. Overall, we have just shown that $v \circ g$ is concave. Taking into account $\kappa \in (0,1]$, $w$ is monotonically increasing and concave. Since the composition of a monotonically increasing and concave function with a concave function preserves concavity, $u_\text{CES} = w\circ (v \circ g)$ is concave on the interior of its domain. Since $u_\text{CES}$ is continuous on its domain and $\text{dom } u_\text{CES}$ is a convex set, the assertion follows.

   %The assertion mainly follows from  \cite[Theorem 3.1]{}. It has been shown there that $u_\text{CES}$ is strictly concave on the interior of its domain for the case $\sum_{j}^m\alpha_j = 1$. The latter can be achieved by multiplying $u_\text{CES}$ by the positive constant $\left(\sum_{j=1}^m\alpha_j\right)^{-\frac{\kappa}{\rho}}$.

     (ii) Let us consider the auxiliary functions
   \[
      g(y) \triangleq \left(\sum_{j=1}^m \alpha_j \, y_j^{\rho}\right)^{\frac{1}{\rho}}, y \geq 0, \quad w(t) \triangleq t^\kappa, t\geq 0.
   \]
     For $\widehat y, \widetilde y \in \text{dom }u_\text{CES}$ with $\widehat y \neq \widetilde y$, and $t \in (0,1)$, we have 
     \begin{equation}
         \label{eq:conv-ces-h}
         \begin{array}{rcl}
u_\text{CES}(t \widehat y +(1-t) \widetilde y) & = & w (g(t \widehat y +(1-t) \widetilde y)) \geq w (tg(\widehat y) +(1-t) g(\widetilde y))\\ \\
& \geq & t w (g(\widehat y)) +(1-t) w(g(\widetilde y)) = t u_\text{CES}(\widehat y) +(1-t) u_\text{CES}(\widetilde y).
 \end{array}
     \end{equation}
Here, the first inequality is due to the concavity of $g$, which is shown in (i), and the increasing monotonicity of $w$. The last inequality follows from the concavity of $w$. Moreover, note that $w$ is strictly monotonically increasing and strictly concave. Hence, we obtain at least one strict inequality in \eqref{eq:conv-ces-h} if $g(t \widehat y +(1-t) \widetilde y) > tg(\widehat y) +(1-t) g(\widetilde y)$ or $g(\widehat y) \neq g(\widetilde y)$. Then, the strict concavity of $u_\text{CES}$ on its domain would be shown. Assume on the contrary that 
 \begin{equation}
         \label{eq:conv-ces-h1}
g(t \widehat y +(1-t) \widetilde y) = tg(\widehat y) +(1-t) g(\widetilde y) \mbox{ and } g(\widehat y) = g(\widetilde y).
\end{equation}
We define the index subset
\[
   J_{\neq}\triangleq \{j \in \{1,\ldots,m \}: \widehat y_j \neq \widetilde y_j\}\neq \emptyset.
\]
It follows from \eqref{eq:conv-ces-h1}
\begin{equation}
         \label{eq:conv-ces-h2}
   \sum_{j \in J_{\neq}} \alpha_j \, (t \widehat y_j +(1-t) \widetilde y_j)^{\rho} = \sum_{j \in J_{\neq}} \alpha_j \, \widehat y_j^{\rho}= \sum_{j \in J_{\neq}} \alpha_j \, \widetilde y_j^{\rho}.
\end{equation}
But, the strict concavity of the function $z^\rho$ with $\rho \in (0,1)$ provides
\begin{equation}
         \label{eq:conv-ces-h4}
    (t \widehat y_j +(1-t) \widetilde y_j)^{\rho} >
    t \widehat y_j^{\rho} +(1-t) \widetilde y_j^{\rho}, \quad \forall j \in J_{\neq}.
\end{equation}
By multiplying with positive $\alpha_j$, $j \in J_{\neq}$, and summing up, we obtain a contradiction together with \eqref{eq:conv-ces-h2}.

(iii) The proof in (ii) can be easily adjusted here. For that, we use in the last step the strict convexity of the function $z^\rho$ with $\rho <0$. Instead of \eqref{eq:conv-ces-h4}, it holds
\[
    (t \widehat y_j +(1-t) \widetilde y_j)^{\rho} <
    t \widehat y_j^{\rho} +(1-t) \widetilde y_j^{\rho}, \quad \forall j \in J_{\neq},
\]
where $\widehat y, \widetilde y \in \text{int dom }u_\text{CES}$. This also contradicts \eqref{eq:conv-ces-h2} as above, and $u_\text{CES}$ is thus strictly concave on the interior of its domain.
\end{proof-app}

\bibliographystyle{plain}
\bibliography{Surbib}

\end{document}